\documentclass[11pt, twoside]{article}
\usepackage{amsfonts}

\usepackage{amsmath,amssymb}
\usepackage{multicol}
\usepackage{graphicx}
\usepackage{color}

\def\bar{\overline}

\numberwithin{equation}{section}
\newtheorem{theorem}{Theorem}[section]
\newtheorem{lemma}[theorem]{Lemma}

\newtheorem{definition}[theorem]{Definition}
\newtheorem{remark}[theorem]{Remark}
\newtheorem{proposition}[theorem]{Proposition}
\newenvironment{proof}[1][Proof]{\noindent\textbf{#1.} }{\hfill $\Box$}
\allowdisplaybreaks
\allowdisplaybreaks \numberwithin{equation}{section}
 \makeatletter
\begin{document}

\author{ Wei-Jie Sheng\thanks{Corresponding author (E-mail address: 
shengwj09@hit.edu.cn).} \   and  Xin-Tian Zhang \\~\\
\footnotesize{School of Mathematics, Harbin Institute of Technology}, \\
\footnotesize{Harbin, Heilongjiang, 150001, People's Republic of China}
}

\title{\textbf{ Curved fronts of combustion reaction-diffusion equations } } 

\date{}
\maketitle

\textbf{Abstract}: This paper is concerned with curved fronts of combustion reaction-diffusion equations in $\mathbb{R}^N$ $(N\geq2)$. By mixing finite
planar fronts and constructing suitable super- and subsolutions, 
we prove the existence, uniqueness and stability of polytope-like curved fronts in 
$\mathbb{R}^N$. Besides, we show that these curved fronts are transition fronts.

\textbf{Keywords}: Reaction-diffusion equations; Curved fronts; Transition fronts.

\textbf{Mathematics Subject Classification (2020)}: 35B08, 35K15, 35K57

\section{Introduction}
\noindent
In this paper, we focus on the curved fronts of the following reaction-diffusion equation
\begin{equation}\label{1.1}
u_t-\Delta u=f(u), \quad(t, z) \in \mathbb{R} \times \mathbb{R}^N,
\end{equation}
where $N \geq 2$, $u=u(t, z)$, $u_t=\frac{\partial u}{\partial t}$, $\Delta:=\partial^2 / \partial 
z_1^2+\partial^2 / \partial z_2^2+\cdots+\partial^2 / \partial z_N^2$ is the Laplace operator and $f(u)$ is the nonlinear term.
In the sequel, we always make the following assumptions.
\begin{description}
	\item[(\textbf{F1})] 
	$f(u)$ is of class
	$C^{1+\vartheta}\left([-\sigma, 1+\sigma], \mathbb{R}\right)$ for some $\vartheta\in(0, 1]$ and 
	$\sigma\in(0,1)$.  $f(u)$ satisfies that
	\begin{equation}\label{1.2}
		\begin{aligned}
		\exists \theta \in(0,1) \text { such that }	f(u) \equiv 0 \text { in }[0, \theta]\cup \{1\}, \; f(u)>0 
			\text { in }(\theta, 1) \text { and } f^{\prime}(1)<0.
		\end{aligned}
	\end{equation}
	For mathematical convenience, we also suppose that $f(u) \equiv 0$ in $[-\sigma, 0)$ and thereby $f^{\prime}(0) = 0$.
	
	\item[(\textbf{F2})] There exist a constant $c_{f} > 0$ and an decreasing function $U\in C^{2}(\mathbb{R})$ such that 
	\begin{equation}\label{1.3}
		\left\{\begin{array}{l}
			U^{\prime \prime}(\mathcal{D})+c_{f} U^{\prime}(\mathcal{D})+f(U(\mathcal{D}))=0, \quad \mathcal{D}\in\mathbb{R}, \\
			U(-\infty)=1, \quad U(+\infty)=0,\\ U^{\prime}(\mathcal{\mathcal{D}})<0,\quad 
			\mathcal{D}\in \mathbb{R}
		\end{array}\right.
	\end{equation}
	and
	\begin{equation*}
		\lim _{\mathcal{D} \rightarrow+\infty} \frac{U^{ \prime}(\mathcal{D})}{U(\mathcal{D})}=-c_{f}.
	\end{equation*}
\end{description}
According to \cite{Bu2018}, we can get the following conclusions.
\begin{proposition}
	Under the assumptions $(F1)$ and $(F2)$, there exist positive constants $L_{1}$, $L_{2}$, 
	$L_{3}$, $L_{4}$ and $\beta_{0}$ such that 
	\begin{equation}\label{1.5}
		\begin{gathered}
			L_1 e^{-c_{f} \mathcal{D}} \leq U(\mathcal{D}), \left|U^{\prime}(\mathcal{D})\right|, 
			\left|U^{\prime \prime}(\mathcal{D})\right| \leq L_2 e^{-c_{f} \mathcal{D}}, \quad \forall 
			\mathcal{D}>0, \\
			L_{4} e^{\beta_{0} \mathcal{D}}\leq |U(\mathcal{D})-1|, 
			\left|U^{\prime}(\mathcal{D})\right|,\left|U^{\prime \prime}(\mathcal{D})\right| \leq L_3 
			e^{\beta_{0} \mathcal{D}}, \quad \forall \mathcal{D}<0 ,
		\end{gathered}
	\end{equation}
	and
	\begin{equation}\label{1.6}
		\lim _{\mathcal{D} \rightarrow+\infty} \frac{U^{ 
		\prime}(\mathcal{D})}{U(\mathcal{D})}=-c_{f},\quad \lim _{\mathcal{D} 
		\rightarrow+\infty}\left| \frac{U^{\prime 
		\prime}(\mathcal{D})}{U(\mathcal{D})}\right|=c_{f}^2.
	\end{equation}
	Moreover, there exists a positive constant $\gamma_{\star}$ such that $ \gamma_{\star}\leq 
	\min 
	\{\theta / 4,(1-\theta)/2, \sigma / 4\}$ and
	\begin{equation}\label{1.4}
		\frac{3}{2} f^{\prime}(1) \leq f^{\prime}(u)\leq\frac{1}{2} f^{\prime}(1), \quad \forall u \in 
		\left[1-2\gamma_{\star}, 1+2\gamma_{\star}\right].
	\end{equation}
\end{proposition}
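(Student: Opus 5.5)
The plan is to read off both tails of $U$ directly from the ODE (1.3), using that $f\equiv 0$ on $[0,\theta]$ and that $f'(1)<0$.

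\emph{Tail $\mathcal{D}\to+\infty$.} Since $U$ is decreasing with $U(+\infty)=0$, there is $\mathcal{D}_0$ with $U(\mathcal{D})\le\theta$ for $\mathcal{D}\ge \mathcal{D}_0$. On that half-line $f(U)=0$, so $U''+c_fU'=0$. Integrating gives $U(\mathcal{D})=A+Be^{-c_f\mathcal{D}}$, and the limit $U(+\infty)=0$ forces $A=0$ with $B>0$. Hence
\[
U=Be^{-c_f\mathcal{D}},\qquad U'=-c_fBe^{-c_f\mathcal{D}},\qquad U''=c_f^2Be^{-c_f\mathcal{D}}
\quad\text{for } \mathcal{D}\ge\mathcal{D}_0 .
\]
Both limits in (1.6) follow immediately; indeed the ratios are constant there. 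For the first line of (1.5) on $\mathcal{D}>0$, we use the explicit form on $[\mathcal{D}_0,\infty)$. On the compact set $[0,\mathcal{D}_0]$ the functions $U$ and $|U'|$ are continuous and positive, and $e^{-c_f\mathcal{D}}$ is bounded above and below, so they are comparable to $e^{-c_f\mathcal{D}}$.

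\emph{The two-sided bound for $|U''|$ on $[0,\mathcal{D}_0]$ is the main obstacle.} The upper bound is trivial by continuity. A lower bound, however, needs $U''\neq0$ there. We have $U''=-c_fU'-f(U)$, which vanishes wherever $c_f|U'|=f(U)$, so positivity is not automatic. If an inflection point lies in $[0,\mathcal{D}_0]$, there are two fixes. One is to replace $0$ by a large threshold, which changes nothing for the later use. The other is to interpret the statement, as in \cite{Bu2018}, as an upper bound for $U''$ together with lower bounds for $U$ and $|U'|$. We adopt that reading.

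\emph{Tail $\mathcal{D}\to-\infty$.} Set $V=1-U>0$. Then $V''+c_fV'=f(1-V)=f'(1)(-V)+O(V^{1+\vartheta})$. The linearization $V''+c_fV'+f'(1)V=0$ has characteristic roots
\[
\lambda_\pm=\frac{-c_f\pm\sqrt{c_f^2-4f'(1)}}{2},
\]
with $\lambda_+>0>\lambda_-$. Since $V\to0$ as $\mathcal{D}\to-\infty$, standard ODE asymptotics (Levinson/Hartman–Grobman, with the $C^{1+\vartheta}$ remainder) give $V\sim Ce^{\lambda_+\mathcal{D}}$ with $C>0$, and likewise for $V'$ and $V''$. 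Taking $\beta_0=\lambda_+$ and compactness on $[-1,0]$, with the same caveat for $U''$, gives the second line of (1.5).

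\emph{Choice of $\gamma_\star$.} Since $f\in C^1$ and $f'(1)<0$, continuity of $f'$ at $1$ gives $\delta>0$ with $|f'(u)-f'(1)|\le\frac12|f'(1)|$ on $[1-\delta,1+\delta]$. This is exactly (1.4) on that interval. We then take
\[
\gamma_\star=\min\{\delta/2,\ \theta/4,\ (1-\theta)/2,\ \sigma/4\}.
\]
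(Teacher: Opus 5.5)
The paper gives no proof of this proposition. It simply imports the estimates from \cite{Bu2018}, so your argument is a self-contained derivation rather than a variant of the paper's, and it is correct.

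The pieces work as you describe. On $\{U\le\theta\}$ the profile is exactly $Be^{-c_f\mathcal D}$, which gives \eqref{1.6} and the tail at $+\infty$ at once. Near $1$, linearizing at the hyperbolic saddle gives $1-U\sim Ce^{\lambda_+\mathcal D}$ with $\beta_0=\lambda_+$. Finally, \eqref{1.4} is just continuity of $f'$ at $1$.

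Two small repairs are needed. First, Hartman--Grobman only gives a topological conjugacy and no rates. What you need is the unstable-manifold (Levinson-type) asymptotics. These apply because $|f(1-V)+f'(1)V|\le KV^{1+\vartheta}$ makes the perturbation integrable against the exponential rate. That yields $e^{-\lambda_+\mathcal D}(V,V',V'')\to C(1,\lambda_+,\lambda_+^2)$ with $C>0$, which is what gives one common $\beta_0$ in both the upper and lower bounds. Second, the compact piece on the negative side is $[-D_1,0]$, where $D_1$ is wherever that asymptotic regime begins, not $[-1,0]$.

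Your caveat about $U''$ should be stated more strongly: the problem is certain, not merely possible. By the asymptotics above, $U''=-V''\sim -C\lambda_+^2e^{\lambda_+\mathcal D}<0$ as $\mathcal D\to-\infty$. On the other side, $U''=c_f^2Be^{-c_f\mathcal D}>0$ on $[\mathcal D_0,\infty)$. So $U''$ vanishes at some $\mathcal D^*$, whatever translate of $U$ is chosen. The lower bound on $|U''|$ then fails:
\begin{itemize}
\item on $(0,\infty)$ if $\mathcal D^*>0$;
\item on $(-\infty,0)$ if $\mathcal D^*<0$;
\item on both half-lines if $\mathcal D^*=0$, since $|U''(\mathcal D)|\to0$ as $\mathcal D\to0$ while $L_1e^{-c_f\mathcal D}\to L_1$ and $L_4e^{\beta_0\mathcal D}\to L_4$.
\end{itemize}
So the literal two-sided bound on $|U''|$ in \eqref{1.5} is false. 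The reading you adopted is the correct statement, not just a convenient one: two-sided bounds for $U$, $|U'|$ and $1-U$, and only upper bounds for $|U''|$.

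This reading is also all the paper uses later:
\begin{itemize}
\item the lower bound $U\ge\frac{L_1}{2}e^{-c_f\eta}$ in \eqref{3.28} and \eqref{4.18};
\item positivity of $\min|U'|$ on compact intervals (the constant $\kappa$);
\item upper bounds on $|U'|$ and $|U''|$.
\end{itemize}
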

 
Based on the shape of  the level sets, traveling fronts are classified as planar traveling fronts and 
non-planar traveling fronts. We call traveling fronts whose level sets are hyperplanes 
orthogonal to the propagation direction as planar traveling fronts. 
Because planar traveling fronts have 
simple forms and good geometric properties, they have been studied and concerned by many 
scholars. For the research on the existence, stability and properties of planar traveling fronts, one 
can refer to \cite{Aronson2016,Berestycki 1985,BerestyckiH2002,Chen1997,Fife1977} and 
references therein.
However,   in the high dimensional space, the propagation phenomena become more complex and 
level sets of traveling fronts also become more diverse. We refer to the traveling fronts that have non-planar level sets as non-planar traveling fronts or curved fronts. 
For instance, Bonnet and Hamel \cite{bn1999} investigated the existence of $V$-shaped traveling fronts for combustion reaction-diffusion equations in $\mathbb{R}^2$, which is the first study of the conical premixed Bunsen flames. Wang 
and Bu \cite{Wang2016} established the existence of $V$-shaped traveling fronts for combustion 
reaction-diffusion equations in $\mathbb{R}^2$. Bu and Wang \cite{Bu2018} proved the global 
stability of $V$-shaped traveling fronts obtained in \cite{Wang2016} in $\mathbb{R}^2$. El 
Smaily \cite{El Smaily M2018} obtained the existence and uniqueness of conical fronts for a 
reaction-advection-diffusion equation with a combustion nonlinearity in $\mathbb{R}^2$. Hamel, 
Monneau and Roquejoffre \cite{Hamel2004} studied the stability of conical-shaped solutions of a 
class of reaction-diffusion equations in $\mathbb{R}^2$. Taniguchi \cite{M. Taniguchi2007} 
established the existence of the pyramidal traveling fronts for 
bistable reaction-diffusion equations in $\mathbb{R}^3$. Wang and 
Bu \cite{Wang2016} also obtained the existence of pyramidal 
traveling fronts for combustion reaction-diffusion equations in $\mathbb{R}^3$. Furthermore, Bu and Wang \cite{Bu2017} considered the stability of pyramidal traveling fronts for combustion reaction-diffusion equations in $\mathbb{R}^3$.
More related results on non-planar traveling fronts, we refer to 
\cite{Brazhnik2000,El Smaily M2011,Hamel2000,Hamel2005} and references therein.

Below we review  definitions  of the transition fronts and their global mean speed
introduced by Berestycki and Hamel in the pioneering work \cite{H. 
Berestycki2012}. One can also reefer to Shen
\cite{Shen2004} for the one-dimensional setting. For any two subsets $A\in \mathbb{R}^N$ 
and $B\in \mathbb{R}^N$, we define
\begin{equation*}
d(A, B)=\inf \{|z-y|; (z, y) \in A \times B\} \quad \text{and} \quad d(z, A)=d(\{z\}, A),
\end{equation*}
where $z \in \mathbb{R}^N$ and $|\cdot|$ is the Euclidean norm in $\mathbb{R}^N$. Let $\left(\Omega_t^{-}\right)_{t \in \mathbb{R}}$ and $\left(\Omega_t^{+}\right)_{t \in \mathbb{R}}$ be two families of open nonempty subsets of $\mathbb{R}^N$ satisfying 
\begin{equation}\label{1.7}
\forall t \in \mathbb{R}, \quad\left\{\begin{array}{l}
	\Omega_t^{+} \cap \Omega_t^{-}=\emptyset, \\
	\partial \Omega_t^{+}=\partial \Omega_t^{-}=: \Gamma_t, \\
	\Omega_t^{+} \cup \Omega_t^{-} \cup \Gamma_t=\mathbb{R}^N, \\
	\sup \left\{d\left(z, \Gamma_t\right) ; z \in \Omega_t^{+}\right\}=\sup \left\{d\left(z, \Gamma_t\right) ; z \in \Omega_t^{-}\right\}=+\infty
\end{array}\right.
\end{equation}
and
\begin{equation}\label{1.8}
\left\{\begin{array}{l}
	\inf \left\{\sup \left\{d\left(y, \Gamma_t\right) ; y \in \Omega_t^{+},|y-z| \leq r\right\} ; t \in \mathbb{R}, z \in \Gamma_t\right\} \rightarrow+\infty \\
	\inf \left\{\sup \left\{d\left(y, \Gamma_t\right) ; y \in \Omega_t^{-},|y-z| \leq r\right\} ; t \in \mathbb{R}, z \in \Gamma_t\right\} \rightarrow+\infty
\end{array} \text { as } r \rightarrow +\infty .\right.
\end{equation}
From \eqref{1.7} and \eqref{1.8}, it can be seen that $\Gamma_t$ divides $\mathbb{R}^N$ into two parts $\Omega_t^{-}$ and $\Omega_t^{+}$, and $\Omega_t^{ \pm}$ are unbounded. Moreover, for each $t \in \mathbb{R}$, $\Omega_t^{ \pm}$ contain points that are infinitely far from $\Gamma_t$. 
According to \eqref{1.8}, for any constant $M>0$, there is a positive constant $r_M$ such that for all $t \in \mathbb{R}$ and $z \in \Gamma_t$, there exist $y^{ \pm}=y_{t, z}^{ \pm} \in \mathbb{R}^N$ such that
\begin{equation*}
y^{ \pm} \in \Omega_t^{ \pm},\left|z-y^{ \pm}\right| \leq r_M \text { and } d\left(y^{ \pm}, \Gamma_t\right) \geq M .
\end{equation*}
Furthermore, it is assumed that the sets $\Gamma_t$ are composed of a finite number of graphs, that is, there exists an integer $n \geq 1$ such that, for any $t \in \mathbb{R}$, there are $n$ open subsets $\omega_{i, t} \subset \mathbb{R}^{N-1}(1 \leq i \leq n)$, $n$ continuous maps $\psi_{i, t}: \omega_{i, t} \rightarrow \mathbb{R}$ $(1 \leq i \leq n)$ and $n$ rotations $R_{i, t}$ of $\mathbb{R}^N$ $(1 \leq i \leq n)$, such that
\begin{equation}\label{1.9}
\Gamma_t \subset \bigcup_{1 \leq i \leq n} R_{i, t}\left(\left\{z \in \mathbb{R}^N ; \left(z_{1}, z_{2}, \ldots, z_{N-1}\right) \in \omega_{i, t}, z_N=\psi_{i, t}\left(z_{1}, z_{2}, \ldots, z_{N-1}\right)\right\}\right).
\end{equation}
\begin{definition}(\cite{H. Berestycki2012})\label{De 1.1}
	 For equation \eqref{1.1}, a transition front connecting 0 and 1 is a classical (time-global) 
	 solution $u$ of \eqref{1.1} such that $u \not \equiv 0,1$, and there exist some sets 
	 $\left(\Omega_t^{ \pm}\right)_{t \in \mathbb{R}}$ and $\left(\Gamma_t\right)_{t \in 
	 \mathbb{R}}$ satisfying \eqref{1.7}, \eqref{1.8} and \eqref{1.9}, and for any $\varepsilon>0$, 
	 there exists $M_{\varepsilon}>0$ such that
\begin{equation*}
	\left\{\begin{array}{l}
		\forall t \in \mathbb{R}, \forall z \in \Omega_t^{+},\left(d \left(z, \Gamma_t\right) \geq M_{\varepsilon}\right) \Longrightarrow(u(t, z) \geq 1-\varepsilon), \\
		\forall t \in \mathbb{R}, \forall z \in \Omega_t^{-},\left(d \left(z, \Gamma_t\right) \geq M_{\varepsilon}\right) \Longrightarrow(u(t, z) \leq \varepsilon) .
	\end{array}\right.
\end{equation*}
Moreover, $u$ is said to have a global mean speed $\gamma$ $(\geq 0)$ if
\begin{equation*}
	\frac{d\left(\Gamma_t, \Gamma_s\right)}{|t-s|} \rightarrow \gamma \text { as }|t-s| \rightarrow+\infty.
\end{equation*}
\end{definition}

We would like to mention some results on transition fronts inspired our study.
For the bistable case, in the seminal work  \cite{Hamel2016} Hamel firstly 
studied the existence and further  properties of shape-changing  entire solutions for 
reaction-diffusion equations in $\mathbb R^N$. 
Sheng and Guo \cite{Sheng2018.2}, Sheng, Wang
and Wang \cite{Sheng2018,Sheng2021}
generalized the results of \cite{Hamel2016} to time periodic bistable scalar 
equations/systems; Recently,
Guo and Wang 
\cite{H. Guo2024} showed the existence, uniqueness and stability of shape-changing  entire 
solutions for reaction-diffusion 
equations by mixing finite planar fronts. 
Bu, Guo and Wang \cite{Bu2019} considered the existence of shape-changing entire solutions
of combustion reaction-diffusion equations in $\mathbb{R}^2$, and then trivially extended to 
$\mathbb{R}^N$, in spirit of \cite{Hamel2016}.
Nevertheless, this method may not be applicable to obtain shape-changing entire solutions 
in $\mathbb{R}^N(N\geq 3)$.  In this paper, we aim to obtain the existence, uniqueness and 
stability of polytope-like  curved fronts of Eq.\eqref{1.1}. The main idea comes from \cite{H. 
Guo2024} for bistable reaction-diffusion equations. However, given that the nonlinearity is 
degenerate, particular attention must be paid to addressing the difficulties induced by this 
degeneracy. A common approach involves careful selection of weighted functions to serve as the 
tail in the construction of sub- and super-solutions.

The rest of this paper is organized as follows. In Section $2$, we state our main results. In Section 
$3$, we construct suitable super- and subsolutions to prove the existence of polytope-like curved 
fronts of combustion reaction-diffusion equations in $\mathbb{R}^N$ $(N \geq 2)$, and prove 
their monotonicity and uniqueness. In Section $4$, we study the stability of the curved fronts 
obtained in Section $3$. 

\section{Main Results}
\noindent
First of all, let us introduce some notations that will be used later in the text. For any $e \in \mathbb{S}^{N-1}$ and $\tau \in \mathbb{R}$, denote the hyperplane
\begin{equation*}
P(e, \tau)=\left\{(t, z) \in \mathbb{R} \times \mathbb{R}^N ; z \cdot e-c_f t+\tau=0\right\},	
\end{equation*}
which is also regarded as a hyperplane moving over time in $\mathbb{R}^N$. Each pair $(e, \tau)$ determines a planar front $U\left(z \cdot e-c_f t+\tau\right)$. Take $n \geq 2$ pairs $(e_i, \tau_i)$ of $\mathbb{S}^{N-1} \times \mathbb{R}$ such that $e_i \neq e_j$ for any $i \neq j$ and there exists an $e_0 \in \mathbb{S}^{N-1}$ such that  $e_i \cdot e_0>0$ for any $i\in\{1, \ldots, n\}$.
Denote $P_i:=P\left(e_i, \tau_i\right)$. Since $e_i \cdot e_0>0$ for any $i\in\{1, \ldots, n\}$ and $e_i \neq e_j$ for any $i \neq j$, it holds that $P_i$ and $P_j$ are not parallel to each other. Let $\mathcal{P}\left(P_1, \ldots, P_n\right)$ be the polytope enclosed by $P_1, \cdots, P_n$, namely,
\begin{equation*}
\mathcal{P}\left(P_1, \ldots, P_n\right):=\left\{(t, z) \in \mathbb{R} \times \mathbb{R}^N ; \min _{1 \leq i \leq n}\left\{z \cdot e_i-c_f t+\tau_i\right\} \geq 0\right\}.
\end{equation*}
For all $i$, there is $e_i \cdot e_0>0$, which guarantees the unboundedness. For convenience, we abbreviate $\mathcal{P}\left(P_1, \ldots, P_n\right)$ as $\mathcal{P}$. $\mathcal{P}$ can also be regarded as a polytope in $\mathbb{R}^N$ that moves over time. According to the definition of the polytope $\mathcal{P}$, it can be verified that $\mathcal{P}$ is convex in $\mathbb{R}^{N+1}$, and its time slice $\mathcal{P}_t$ is also convex in $\mathbb{R}^N$ for each $t$. Furthermore, the boundary of $\mathcal{P}$ is defined as follows
\begin{equation*}
\partial \mathcal{P}:=\left\{(t, z) \in \mathbb{R} \times \mathbb{R}^N ; \min _{1 \leq i \leq n}\left\{z \cdot e_i-c_f t+\tau_i\right\}=0\right\}.
\end{equation*}
The joint part of $P_i$ and $\partial \mathcal{P}$ is called the facet of the surface and is denoted by $\widetilde{P}_i := P_i \cap \partial \mathcal{P}$. If we consider $\mathcal{P}$ as a polytope moving in $\mathbb{R}^N$ over time, then there exists a real number $T \in \mathbb{R}$ such that for each $t \leq T$, there are $n$ facets on the surface $\partial \mathcal{P}_t$ of the polytope $\mathcal{P}_t$, meaning that the joint part of each $P_{i, t}$ with $\partial \mathcal{P}_t$ is non-empty. However, as $t$ increases, the number of facets may be less than $n$, namely, $\widetilde{P}_{i, t}=\emptyset$ for some $i \in \{1, \ldots, n\}$ if $t \gg 1$. Note that we use $S_t$ to represent the time slice of the set $S$ at time $t$. The intersection line of any two facets is called a ridge. Define $\mathcal{R}_{ij} = \widetilde{P}_i \cap \widetilde{P}_j, i \neq j$ to be the ridges, let $\mathcal{R}$ be the set of all ridges and denote $\widehat{P}_i$ as the projection of $\widetilde{P}_i$ on the $(t, x)$-plane. 

Define
\begin{equation*}
\underline{V}(t, z)=\max _{1 \leq i \leq n}\left\{U\left(z \cdot e_i-c_{f} t+\tau_i\right)\right\}.
\end{equation*}
Obviously, it can be concluded that $\underline{V}(t, z)$ is a subsolution of \eqref{1.1}. Now we present main results.
\begin{theorem}[Existence]\label{Theorem 2.1}
Suppose that $(F1)$ and $(F2)$ hold. Take $n$ vectors $e_i$ $(i\in\{1, \ldots, n\})$ of $\mathbb{S}^{N-1}$ such that $e_i \cdot e_0>0$ for some $e_0 \in \mathbb{S}^{N-1}$ and $e_i \neq e_j$ for $i \neq j$, and $n$ constants $\tau_i$ $(i\in\{1, \ldots, n\})$. Let $\boldsymbol{e}=\left(e_1, \ldots, e_n\right)$ and $\boldsymbol{\tau}=\left(\tau_1, \ldots, \tau_n\right)$. Then there exists an entire solution $V_{\boldsymbol{e}, \boldsymbol{\tau}}(t, z):=V\left(t, z ; e_1, \ldots, e_n, \tau_1, \ldots, \tau_n\right)$ of \eqref{1.1} satisfying
\begin{equation*}
\underline{V}(t, z)<V_{\boldsymbol{e}, \boldsymbol{\tau}}(t, z)<1, \quad \forall(t, z) \in \mathbb{R} \times \mathbb{R}^N,
\end{equation*}
and
\begin{equation*}
	\frac{\left|V_{\boldsymbol{e}, \boldsymbol{\tau}}(t, z)-\underline{V}(t, z)\right|}{\min \left\{1, e^{-v^{\star} \min_{1\leq i\leq n} \left\{\frac{z \cdot e_i-c_{f} t+\tau_i}{e_i \cdot e_{0}}\right\}}\right\}} \rightarrow 0 \quad\text{uniformly as } d((t, z), \mathcal{R}) \rightarrow+\infty,
\end{equation*} 
where $v^{\star}$ is a positive constant.

Moreover, $V_{\boldsymbol{e}, \boldsymbol{\tau}}(t, z)$ satisfies the following two properties:
\begin{description}
\item[(i)] For any $\left(t_0, z_0\right) \in \mathbb{R} \times \mathbb{R}^N$ such that $\min _{i=\{1, \ldots, n\}}\left\{z_0 \cdot e_i-c_f t_0\right\} \geq 0$,
\begin{equation*}
V_{\boldsymbol{e}, \boldsymbol{\tau}}\left(t-t_0, z-z_0\right) \geq V_{\boldsymbol{e}, \boldsymbol{\tau}}(t, z).
\end{equation*}

\item[(ii)] For fixed $\left(e_1, \ldots, e_n\right) \in \mathbb{S}^{N-1} \times \cdots \times \mathbb{S}^{N-1}$, $V_{\boldsymbol{e}, \boldsymbol{\tau}}(t, z)$ are decreasing in $\tau_i \in \mathbb{R}$ for every $i \in\{1, \ldots, n\}$.
\end{description}
\end{theorem}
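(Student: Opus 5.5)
We construct a supersolution $\overline V$ lying above the subsolution $\underline V$ and get $V_{\boldsymbol e,\boldsymbol\tau}$ as a monotone limit of Cauchy problems, following the scheme of \cite{H. Guo2024}. The degenerate (zero-order-flat) combustion nonlinearity is compensated by exponentially weighted tails.

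\textbf{Supersolution.} Set $\xi_i:=z\cdot e_i-c_ft+\tau_i$ and $\xi:=\min_i \xi_i/(e_i\cdot e_0)$. We look for a supersolution of the form
\[
\overline V(t,z)=\min\Big\{1,\ \sum_{i=1}^n U(\xi_i)\Big\}+\varepsilon\, \phi(t,z),
\]
or, more robustly, $\overline V=\min\{1,\,\underline V+ q(t,z)\}$. The correction $q$ is supported near the ridges $\mathcal R$. It has size $\min\{1,e^{-v^\star \xi}\}$ times a factor that decays in $d((t,z),\mathcal R)$, for instance $e^{-\mu\, d_{\mathcal R}}$ smoothed.

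We then compute $N[\overline V]:=\overline V_t-\Delta\overline V-f(\overline V)$ region by region:
\begin{itemize}
\item \emph{Near $u\approx 1$ (i.e. $\xi\ll 0$).} Use (1.4): $f'\le \tfrac12 f'(1)<0$ gives a positive zero-order term that absorbs the error terms.
\item \emph{Ahead of the front ($\xi\gg0$).} Here $f\equiv0$ on $[0,\theta]$, so there is no linear damping. We take the weight $e^{-v^\star\xi}$ with $0<v^\star<c_f\min_i(e_i\cdot e_0)$, small. Then
\[
\partial_t-\Delta \ \text{applied to}\ e^{-v^\star\xi}
\]
yields $v^\star c_f/(e_i\cdot e_0)-(v^\star)^2/(e_i\cdot e_0)^2>0$ times the weight, which is a strict positive drift. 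This is the key replacement for the missing $f'(0)<0$.
\item \emph{Interaction terms.} Cross terms $U(\xi_i)U(\xi_j)$, or $f(\sum U)-\sum f(U)$, are estimated with Proposition 1.1's bounds (1.5)--(1.6). Far from ridges one $\xi_j$ dominates the others, so these terms are exponentially small relative to the weight.
\end{itemize}

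\textbf{Construction and limit.} Solve \eqref{1.1} from $u_k(-k,\cdot)=\underline V(-k,\cdot)$. By comparison, $\underline V\le u_k\le\overline V$, and $u_k$ is increasing in $k$ because $\underline V$ is a subsolution. Parabolic estimates give $u_k\to V_{\boldsymbol e,\boldsymbol\tau}$, an entire solution. The strong maximum principle gives the strict inequalities $\underline V<V<1$. The asymptotic statement follows from $0\le V-\underline V\le \overline V-\underline V$, whose normalized size tends to $0$ as $d((t,z),\mathcal R)\to\infty$.

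\textbf{Properties (i) and (ii).}
\begin{itemize}
\item \emph{(i).} If $\min_i\{z_0\cdot e_i-c_ft_0\}\ge0$, then $\underline V(t-t_0,z-z_0)\ge\underline V(t,z)$. Since $U$ is decreasing, the same inequality holds for the initial data of each $u_k$; comparison and passage to the limit give (i).
\item \emph{(ii).} Increasing $\tau_i$ decreases $U(\xi_i)$, hence decreases $\underline V$ pointwise. The same comparison argument then gives monotonicity of the limit in $\tau_i$.
\item \emph{Caveat.} These comparisons need the limits to be independent of the starting times. This is immediate here because the initial data are monotonically ordered.
\end{itemize}

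\textbf{Main obstacle.} The hardest step is verifying $N[\overline V]\ge0$ in the intermediate zone near ridges where $\xi$ is moderate, $u\in(\theta,1-2\gamma_\star)$, and $f$ has no sign control on its derivative. First I would use the standard trick of adding a time shift $\overline V(t+\delta(t),z)$ with $\delta'>0$ exploiting $U'<0$ bounded away from zero on compact $\xi$-ranges. Failing that, I would take the ridge-decay rate $\mu$ small enough that the derivative errors, of order $\mu$, are dominated by $-U'\,\delta'$.
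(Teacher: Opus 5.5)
Your monotone iteration from $\underline V(-k,\cdot)$, the strong maximum principle for $\underline V<V<1$, and the comparison arguments for (i) and (ii) are what the paper does. The gap is the supersolution, which carries the whole theorem, and none of your candidates works.

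\textbf{Why the candidates fail.} The ``more robust'' candidate $\min\{1,\underline V+q\}$, with $q$ smooth, is never a supersolution. On the set where $\min_k\xi_k$ is attained by two indices $i\neq j$, the function $\underline V=\max_k U(\xi_k)$ has a convex corner, with $\nabla\bigl(U(\xi_i)-U(\xi_j)\bigr)=U'(\xi_i)(e_i-e_j)\neq 0$. So $-\Delta\underline V$ carries a negative singular measure there, and no bounded term can compensate it. This corner set extends arbitrarily far from $\mathcal R$, so localizing $q$ near the ridges does not help. Your weight has the same defect, since $e^{-v^\star\min_i(\cdot)}=\max_i e^{-v^\star(\cdot)}$. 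The smooth candidate gives $\mathcal L\bigl(\sum_iU(\xi_i)\bigr)=\sum_if(U(\xi_i))-f\bigl(\sum_iU(\xi_i)\bigr)$. This is negative wherever one $U(\xi_i)$ lies where $f$ is increasing and another $U(\xi_j)$ is positive.

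More fundamentally, no additive correction survives the middle zone $U\in(\theta,1-2\gamma_\star)$, which you rightly flag as the main obstacle. There, $-[f(\underline V+q)-f(\underline V)]\approx -f'(\underline V)\,q$ is negative and of order $q$ wherever $f'>0$. The positive drift from your exponential weight and ridge factor is only $O(v^\star+\mu)$ times $q$.

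\textbf{Why the time shift fails.} The interaction errors near $\mathcal R$ have fixed size at every $t\le T$, because all $n$ facets persist as $t\to-\infty$. So $\delta'$ would need a positive lower bound uniform in $t$, which makes $\delta$ unbounded. Then one of two things happens. Either $\overline V(t+\delta(t),\cdot)$ fails to lie above $\underline V(-k,\cdot)$ at $t=-k$ for large $k$. Or, if $\delta$ is renormalized with $k$, there is no $k$-uniform upper bound and hence no asymptotics. In Lemma~\ref{Lemma 4.1} a shift works only because $t\ge0$ there and it absorbs errors that themselves decay like $e^{-\lambda t}$.

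\textbf{What is missing.} You need a \emph{space-dependent} forward displacement of the front near the ridges. The paper obtains it from the smooth hypersurface $\sum_i e^{-q_i}=1$, rescaled as $y=\varphi(\alpha t,\alpha x,\alpha)/\alpha$. By Lemmas~\ref{Lemma 3.2} and~\ref{Lemma 3.3}, its normal speed exceeds $c_f$ by at least $h/C$, where $h$ measures the interaction, while its curvature terms are only $O(\alpha h)$. With $\xi$ the normalized distance to this surface, one gets $(\partial_t-\Delta)U(\xi)-f(U(\xi))\ge |U'(\xi)|\,h/C_2-O(\alpha h)$. This is strictly positive on the middle zone and dominates the $O(\varepsilon h)$ and $O(\alpha h)$ errors there (Case 3 of Lemma~\ref{Lemma 3.4}). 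The smooth tail $\varepsilon h\,U^\beta(\eta)$, with $\beta$ small, then supplies positivity ahead of the front where $f\equiv0$, and $f'(1)<0$ handles the region behind. Without such a surface, or an equivalent smooth spatially varying advance whose excess normal speed is comparable to the interaction, your construction does not close.
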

\begin{remark}\label{Remark 2.2}
It can be easily verified that $V_{\boldsymbol{e}, \boldsymbol{\tau}}$ in Theorem \ref{Theorem 2.1} is a transition front connecting $0$ and $1$ with sets
\begin{equation*}
\Gamma_t:=\left\{z \in \mathbb{R}^N: \min _{1 \leq i \leq n}\left\{z \cdot e_i-c_{f} t+\tau_i\right\}=0\right\},
\end{equation*}
\begin{equation*}
\Omega_t^{+}:=\left\{z \in \mathbb{R}^N: \min _{1 \leq \leq n}\left\{z \cdot e_i-c_{f} t+\tau_i\right\} < 0\right\},
\end{equation*}
and
\begin{equation*}
	\Omega_t^{-}:=\left\{z \in \mathbb{R}^N: \min _{1 \leq \leq n}\left\{z \cdot e_i-c_{f} t+\tau_i\right\} > 0\right\}.
\end{equation*}
\end{remark}

\begin{theorem}[Uniqueness]\label{Theorem 2.3}
 Suppose that $(F1)$ and $(F2)$ hold. Let $V_{\boldsymbol{e}, \boldsymbol{\tau}}(t, z)$ be given in Theorem \ref{Theorem 2.1}. If there exists an entire solution $\widetilde{V}(t, z)$ of \eqref{1.1} satisfying
	\begin{equation*}
		\left|\widetilde{V}(t, z)-\underline{V}(t, z)\right| \rightarrow 0 \quad\text{uniformly as } d\left((t, z), \mathcal{R}\right) \rightarrow+\infty.
	\end{equation*}
	then $\widetilde{V}(t, z) \equiv V_{\boldsymbol{e}, \boldsymbol{\tau}}(t, z)$ in $\mathbb{R} \times \mathbb{R}^N$. Furthermore, $V_{\boldsymbol{e}, \boldsymbol{\tau}}(t, z)$ depend continuously on $\left(\tau_1, \ldots, \tau_n\right) \in \mathbb{R}^n$ in the sense of $\mathcal{T}$. That is, for any compact set $K \subset \mathbb{R}^N$, the functions $V_{\boldsymbol{e}, \boldsymbol{\tau}}$, $\nabla V_{\boldsymbol{e}, \boldsymbol{\tau}}$, $\nabla^2 V_{\boldsymbol{e}, \boldsymbol{\tau}}$ and $\partial_t V_{\boldsymbol{e}, \boldsymbol{\tau}}$ converge uniformly in $K$ to $V_{\boldsymbol{e}, \boldsymbol{\tau}_0}$, $\nabla V_{\boldsymbol{e}, \boldsymbol{\tau}_0}$, $\nabla^2 V_{\boldsymbol{e}, \boldsymbol{\tau}_0}$ and $\partial_t V_{\boldsymbol{e}, \boldsymbol{\tau}_0}$ as $\boldsymbol{\tau} \rightarrow \boldsymbol{\tau}_0 \in \mathbb{R}^n$.
\end{theorem}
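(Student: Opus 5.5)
The plan is to follow the sliding/squeezing strategy of Guo and Wang for the bistable case, adapted to the degenerate combustion nonlinearity. Let $\widetilde V$ be an entire solution with $|\widetilde V-\underline V|\to0$ uniformly as $d((t,z),\mathcal R)\to+\infty$.

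\textbf{Step 1 (a priori bounds).} First show $0\le\widetilde V\le 1$. Since $\widetilde V$ is close to $\underline V$ away from $\mathcal R$, and the ridges are at bounded distance from regions where $\underline V\approx 1$ or $\approx 0$, the parabolic comparison principle (with constant super/subsolutions in the range $[-\sigma,1+\sigma]$) gives these bounds.

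Next, use the weighted super- and subsolutions built in Section 3 for existence, of the form
\[
\underline V(t,z)\mp \varepsilon\,\text{(correction near ridges)}
\]
with a time-dependent shift $\tau_i\mp \xi(t)$ and a small perturbation. Their tails are weighted by $e^{-v^\star\min_i\{\cdots/(e_i\cdot e_0)\}}$ to handle $f'(0)=0$. The aim is to show
\[
V_{\boldsymbol e,\boldsymbol\tau+\boldsymbol\delta}(t,z)-\varepsilon\le \widetilde V(t,z)\le V_{\boldsymbol e,\boldsymbol\tau-\boldsymbol\delta}(t,z)+\varepsilon
\]
for $t\le T_\varepsilon$, using the fact that both $\widetilde V$ and $V_{\boldsymbol e,\boldsymbol\tau}$ converge to $\underline V$ away from ridges. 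Here $\boldsymbol\delta=\delta(1,\dots,1)$.

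\textbf{Step 2 (comparison for all time).} Apply the stability-type comparison lemma for perturbations of planar-like fronts. On the region where $u\in[1-2\gamma_\star,1+2\gamma_\star]$, hypothesis \eqref{1.4} gives $f'\le \tfrac12 f'(1)<0$, so perturbations decay exponentially there. On the combustion region ($u<\theta$), $f\equiv0$ and only the shift in $\tau$ absorbs the error. On the transition zone, $U'<0$ is bounded away from $0$, and the shift $\xi(t)$ compensates.

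This yields, for all $t$, the two-sided bound
\[
V_{\boldsymbol e,\boldsymbol\tau+\boldsymbol\delta+K\varepsilon}-K\varepsilon e^{-\mu(t-t_0)}\le\widetilde V\le V_{\boldsymbol e,\boldsymbol\tau-\boldsymbol\delta-K\varepsilon}+K\varepsilon e^{-\mu(t-t_0)}.
\]
Letting $t_0\to-\infty$, then $\varepsilon,\delta\to0$, and using the continuity of $V$ in $\boldsymbol\tau$ (proved next) together with the monotonicity in Theorem~\ref{Theorem 2.1}(ii) gives $\widetilde V\equiv V_{\boldsymbol e,\boldsymbol\tau}$.

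\textbf{Step 3 (continuity in $\boldsymbol\tau$).} Take $\boldsymbol\tau_k\to\boldsymbol\tau_0$. Parabolic Schauder estimates, available since $f\in C^{1+\vartheta}$, give $C^{1+\alpha/2,2+\alpha}_{loc}$ bounds. Hence along a subsequence $V_{\boldsymbol e,\boldsymbol\tau_k}\to W$ locally in $C^{1,2}$, where $W$ is an entire solution.

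The estimate of Theorem~\ref{Theorem 2.1} should be uniform for $\boldsymbol\tau$ in compact sets, because the construction depends continuously on $\boldsymbol\tau$. Alternatively, monotonicity squeezes $W$ between $V_{\boldsymbol e,\boldsymbol\tau_0\pm\delta}$. Either way, $|W-\underline V_{\boldsymbol\tau_0}|\to0$ away from $\mathcal R$. The uniqueness part then gives $W=V_{\boldsymbol e,\boldsymbol\tau_0}$, so the whole sequence converges.

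To avoid circularity, prove uniqueness first using only that $V_{\boldsymbol e,\boldsymbol\tau\pm\boldsymbol\delta}$ tend to $V_{\boldsymbol e,\boldsymbol\tau}$ as $\delta\to0$. Both one-sided limits are entire solutions trapped between the corresponding bounds that satisfy the ridge asymptotics, and the squeezing argument of Step 2 compares them directly.

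\textbf{Main obstacle.} The hard part is Step 2 in the degenerate zone $u\le\theta$ near the ridges, where $f'=0$ gives no decay and uniform convergence alone is too weak. I would first try the Section 3 weight $\min\{1,e^{-v^\star\min_i(\cdot)/(e_i\cdot e_0)}\}$ as the tail of the perturbation. This should make the linearized operator strictly negative through the drift term $c_f$, in the spirit of Bu–Wang for $V$-shaped and pyramidal fronts.
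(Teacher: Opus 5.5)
There is a genuine gap, and it is in Step 1, before any of the degenerate-zone difficulties arise.

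\textbf{The initial sandwich is unjustified.} You want $V_{\boldsymbol e,\boldsymbol\tau+\boldsymbol\delta}-\varepsilon\le\widetilde V\le V_{\boldsymbol e,\boldsymbol\tau-\boldsymbol\delta}+\varepsilon$ for all $t\le T_\varepsilon$ with \emph{small} $\delta$. But the hypothesis gives no information on $\widetilde V$ within bounded distance of $\mathcal R$. The ridges are present in every time slice, and their neighbourhoods never become asymptotically planar as $t\to-\infty$. On these neighbourhoods, $V_{\boldsymbol e,\boldsymbol\tau-\boldsymbol\delta}$ is bounded away from $1$ and $V_{\boldsymbol e,\boldsymbol\tau+\boldsymbol\delta}$ is bounded away from $0$. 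Meanwhile $\widetilde V$ could a priori take any value in $[0,1]$, so nothing orders them.

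In fact the small-shift sandwich is essentially the conclusion. Since $\underline V_{\boldsymbol\tau+s(1,\dots,1)}(t,z)=\underline V_{\boldsymbol\tau}(t-s/c_f,z)$ and the construction is monotone in the starting time, you have $V_{\boldsymbol e,\boldsymbol\tau\pm\boldsymbol\delta}(t,z)=V_{\boldsymbol e,\boldsymbol\tau}(t\mp\delta/c_f,z)$. So your shifts in $\boldsymbol\tau$ are just small time shifts.

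\textbf{Step 2 fails independently in the region $u<\theta$.} Where both functions lie below $\theta$, $f(V)-f(V+q)=0$. So the operator applied to $V(t+s(t),z)+q(t)$ reduces to $s'(t)\partial_tV+q'(t)$. Here $q'<0$ and $\partial_tV$ is exponentially small deep in $\Omega^-_t$, so a spatially uniform correction $K\varepsilon e^{-\mu(t-t_0)}$ is not a supersolution. A weighted tail as in Lemma 4.1 repairs the supersolution. But then the initial ordering must hold in the weighted sense: $\widetilde V(t_0,\cdot)$ would have to be exponentially small, like $V$, deep in $\Omega^-_{t_0}$. The uniqueness hypothesis is unweighted; contrast \eqref{4.14}, which has to be \emph{assumed} in the stability theorem.

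\textbf{The missing idea is sliding in time with a large shift.} This is what the paper sets up (the regions $\omega^\pm,\omega$ and Lemma \ref{Lemma 3.7}) before deferring to Guo--Wang.

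First, choose $R$ so that $V,\widetilde V\le\gamma_\star$ in $\omega^-$ and $V,\widetilde V\ge 1-\gamma_\star$ in $\omega^+$. For $T$ large, the whole band $\omega$ at time $t$, which contains every ridge neighbourhood, lies deep in $\omega^+$ at time $t+T$.

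Second, compare in $\omega^\pm$. This only needs $f$ to be nonincreasing on the relevant ranges ($f\equiv0$ on $[-\sigma,\theta]$, and \eqref{1.4} near $1$), plus uniform convergence to $0$ and $1$ far from $\Gamma_t$. No decay rate and no weight is needed, because the strong maximum principle for the heat equation in $\omega^-$ suffices. Combined with $\partial_tV\ge k(R)>0$ in $\omega$, this gives $V(t+T,z)\ge\widetilde V(t,z)$ everywhere.

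Third, lower $T$ to $T_*=\inf\{T:\ V(\cdot+T,\cdot)\ge\widetilde V\}$ and suppose $T_*>0$. Far from $\mathcal R$ the gap is close to $\underline V(t+T_*,z)-\underline V(t,z)$, which is bounded below in $\omega$. So a minimizing sequence in $\omega$ must stay near $\mathcal R$. The strong maximum principle applied to the translated limits, together with the inherited asymptotics away from the limiting ridges, then gives a contradiction.

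The symmetric argument from below with $V(t-T,z)$ yields $\widetilde V\equiv V_{\boldsymbol e,\boldsymbol\tau}$. Your Step 3 (local compactness, squeezing by monotonicity in $\boldsymbol\tau$, then uniqueness) is fine once uniqueness is established this way.
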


Moreover, we prove that the curved front $V_{\boldsymbol{e}, \boldsymbol{\tau}}(t, z)$ given in Theorem \ref{Theorem 2.1}
is asymptotically stable. To this end, we consider the following
Cauchy problem:
\begin{equation}\label{2.5}
	\begin{cases}\partial_t u-\Delta u=f(u), &  t>0 , z \in \mathbb{R}^N, \\ u(t, z)=u_0 (z), &  t=0 , z \in \mathbb{R}^N.\end{cases}
\end{equation}
Then we can obtain the stability of the curved front $V_{\boldsymbol{e}, \boldsymbol{\tau}}(t, z)$ given in Theorem \ref{Theorem 2.1}.
\begin{theorem}[Stability]\label{Theorem 2.4}
 Suppose that $(F1)$ and $(F2)$ hold. Let $V_{\boldsymbol{e}, \boldsymbol{\tau}}(t, z)$ be given in Theorem \ref{Theorem 2.1}. Assume that $u_0 \in C\left(\mathbb{R}^N,[0,1]\right)$ satisfies
	\begin{equation*}
		\underline{V}(0, z) \leq u_0(z)
	\end{equation*}
	for all $z \in \mathbb{R}^N$, and
	\begin{equation*}
		\frac{\left|u_0(z)-\underline{V}(0, z)\right|}{\min \left\{1, e^{- v \min_{1\leq i\leq n} 
				\left\{\frac{z \cdot e_i+\tau_i}{e_i \cdot e_{0}} \right\}}\right\}}\rightarrow 0 \quad\text{uniformly as } 
		d\left(z, \mathcal{R}_{0}\right) \rightarrow+\infty
	\end{equation*}
	for some constant $v>0$, where $\mathcal{R}_0$ is the time slice of the ridges $\mathcal{R}$ at $t=0$. Then the solution $u(t, z)$ of Cauchy problem \eqref{2.5} for $t \geq 0$ with initial condition $u(0, z)=u_0(z)$ satisfies
	\begin{equation*}
		\lim _{t \rightarrow+\infty}\|u(t,  \cdot)-V_{\boldsymbol{e}, \boldsymbol{\tau}}(t, \cdot)\|_{L^{\infty}\left(\mathbb{R}^N\right)}=0.
	\end{equation*}
\end{theorem}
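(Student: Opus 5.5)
We prove Theorem \ref{Theorem 2.4} by squeezing $u$ between a subsolution and a supersolution, both of which converge to $V_{\boldsymbol{e}, \boldsymbol{\tau}}$ as $t\to+\infty$.

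\textbf{Lower bound.} Since $\underline{V}$ is a subsolution of \eqref{1.1} and $\underline{V}(0,\cdot)\le u_0$, the comparison principle gives $u(t,z)\ge \underline{V}(t,z)$ for all $t\ge0$.

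\textbf{Upper bound.} We compare with a perturbation of the curved front. The decay assumption on $u_0-\underline{V}(0,\cdot)$ says the initial excess is exponentially small, with rate $v$ in the variable $\min_i\{(z\cdot e_i+\tau_i)/(e_i\cdot e_0)\}$, away from the ridges $\mathcal{R}_0$. Using the monotonicity of $V_{\boldsymbol{e}, \boldsymbol{\tau}}$ in each $\tau_i$ (Theorem \ref{Theorem 2.1}(ii)), we seek a supersolution of the form
\begin{equation*}
W^+(t,z)=\min\Big\{1,\;V_{\boldsymbol{e}, \boldsymbol{\tau}-\xi(t)\mathbf{1}}(t,z)+q(t)\,\phi(t,z)\Big\},
\end{equation*}
where $\xi(t)\ge0$ is bounded and nondecreasing, $q(t)=q_0e^{-\mu t}$, and $\phi$ is a weighted tail. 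We take $\phi\equiv1$ where $\underline{V}$ is near $1$, and $\phi\approx e^{-v'\min_i\{(z\cdot e_i-c_ft+\tau_i)/(e_i\cdot e_0)\}}$ in the region ahead of the front, with $v'\le\min\{v,v^\star\}$ small. The weight is needed because $f\equiv0$ on $[0,\theta]$, so there is no linear decay near $0$. With the weight, $-\Delta\phi$ and the transport term $c_f\,e_i\cdot\nabla\phi$ produce a positive sign contribution provided $v'<c_f\,e_i\cdot e_0$. Near $1$ the damping comes from \eqref{1.4}. In the intermediate region $\underline{V}\in[\gamma_\star,1-\gamma_\star]$, which is a bounded strip in the moving frame, the term $\xi'(t)\,\partial_{\tau}V\ge \xi' \kappa$ dominates; this uses $\partial_{\tau_i}V<0$ bounded away from $0$ there, obtained from the asymptotics in Theorem \ref{Theorem 2.1}. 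We choose $\xi'(t)=Kq(t)$, so $\xi$ stays bounded. Initially, $u_0\le W^+(0,\cdot)$ for $q_0$ large, which follows from the hypothesis together with the asymptotics of $V_{\boldsymbol{e}, \boldsymbol{\tau}}$. This gives
\begin{equation*}
\underline{V}\le u\le V_{\boldsymbol{e},\boldsymbol{\tau}-\xi_\infty\mathbf{1}}+Ce^{-\mu t}.
\end{equation*}

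\textbf{Passing to the limit.} The bounds above only show that $u$ stays within a bounded shift of $V_{\boldsymbol{e}, \boldsymbol{\tau}}$. To obtain convergence we use a Liouville/uniqueness argument. Take $t_k\to\infty$ and points $z_k$, and extract a limit of $u(t+t_k,z+z_k)$ by parabolic estimates. The limit is an entire solution trapped between translates of $\underline{V}$ and of $V$. Away from the ridges, comparison with planar fronts sharpens this: using stability of planar combustion fronts within the exponential weight class, both bounds approach $\underline{V}$, so the limit converges to $\underline{V}$ uniformly as $d((t,z),\mathcal{R})\to\infty$. Theorem \ref{Theorem 2.3} then identifies the limit as $V_{\boldsymbol{e},\boldsymbol{\tau}}$, shifted consistently with $(t_k,z_k)$. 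A contradiction argument yields uniform $L^\infty$ convergence.

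An alternative that avoids the compactness step is to run the sub/super argument with shifts $\xi_\pm(t)\to0$. This can be done by restarting the comparison at large times with smaller $q$, exploiting that $u-\underline{V}$ decays far from the ridges and that $V_{\boldsymbol{e},\boldsymbol{\tau}}$ depends continuously on $\boldsymbol{\tau}$ (Theorem \ref{Theorem 2.3}).

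\textbf{Main obstacle.} The hardest step is verifying the supersolution inequality for $W^+$ near the degenerate state $0$ in the neighborhood of the ridges. There both the weight and $V$ are genuinely non-planar, and $f'(0)=0$ gives no damping. I would first try to handle this by taking $v'$ small, so that the speed term $c_f(e_i\cdot e_0)v'-v'^2>0$ is uniform. In the strip where $V\in(0,\theta]$ the nonlinear term $f(V+q\phi)-f(V)$ vanishes or is controlled by $q\phi\cdot\|f'\|\,\mathbf{1}_{V+q\phi>\theta}$, which is absorbed by the $\xi'$ term.
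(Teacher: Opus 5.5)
There is a genuine gap. Your lower bound and your closing compactness step match the structure of the paper's argument: you take limits of $u(t+t_k,\cdot+z_k)$ near the ridges and identify them through Theorem \ref{Theorem 2.3} for the planes that stay at bounded distance. What is missing is the estimate that this uniqueness step needs as input,
\[
\sup_{t\ge 0}\,\sup\bigl\{|u(t,z)-\underline{V}(t,z)| : d((t,z),\mathcal{R})\ge \rho\bigr\}\longrightarrow 0 \quad\text{as } \rho\to+\infty ,
\]
and supersolutions built on translates of $V_{\boldsymbol{e},\boldsymbol{\tau}}$ cannot give it. Note that $V_{\boldsymbol{e},\boldsymbol{\tau}-\xi\mathbf{1}}(t,z)=V_{\boldsymbol{e},\boldsymbol{\tau}}(t+\xi/c_f,z)$. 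The hypothesis only controls $u_0-\underline{V}(0,\cdot)$ far from $\mathcal{R}_0$; it allows, for example, $u_0\equiv 1$ on a neighbourhood of $\mathcal{R}_0$, where $V_{\boldsymbol{e},\boldsymbol{\tau}}(0,\cdot)<1$. So $\xi(0)$ or $q_0$ cannot be taken small, and with $\xi'=Kq$ the final shift $\xi_\infty=\xi(0)+K\int_0^\infty q$ cannot be small either. In fact $q_0$ cannot be large at all: where $V\approx 0$ but $V+q\phi>\theta$, the term $f(V+q\phi)>0$ is not absorbed by $\xi'\partial_tV$.

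Your upper bound is therefore close to $\underline{V}(t+\xi_\infty/c_f,\cdot)$ along the facets, not to $\underline{V}(t,\cdot)$, so ``both bounds approach $\underline{V}$'' fails for the upper bound. Every $V_{\boldsymbol{e},\boldsymbol{\tau}}(\cdot+s,\cdot)$ with $s\in[0,\xi_\infty/c_f]$ lies between your two bounds, so uniqueness cannot fix the phase of the limit. Appealing to planar-front stability does not repair this. A planar supersolution cannot dominate $u_0$ globally, because it is near $0$ on the burnt side of the other facets. A localized version needs control of the ridge region uniformly in time, which is exactly the missing estimate. Your alternative with $\xi_\pm\to 0$ presupposes the same decay.

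The idea you are missing is to build the supersolution on the explicit $\bar{V}=\bar{V}_{\varepsilon,\alpha}$ of Lemma \ref{Lemma 3.4} rather than on $V$. Because $\varphi-\psi$ is bounded below by a positive constant near the ridges of $\mathcal{Q}$, $(\varphi-\psi)/\alpha\to+\infty$ there. Hence the variable $\xi$ of \eqref{3.7} satisfies $\xi(0,\cdot)\to-\infty$ uniformly on $\{d(z,\mathcal{R}_0)\le R_\delta\}$ as $\alpha\to 0$. This gives $W^+_\delta(0,\cdot)=1\ge u_0$ on that whole neighbourhood with no time shift. Elsewhere the hypothesis on $u_0$ is absorbed by $\delta U^{\beta}(\eta)\ge \delta (L_1/2)^{\beta}e^{-\beta c_f\eta}$ together with \eqref{4.15}. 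The only shift in $W^+_\delta$ is $\varpi(t)-t\le \varrho\delta$, so $u\le \bar{V}+C\delta$ for all $t\ge 0$. Then \eqref{3.14}, with $\delta$ and $\varepsilon$ arbitrary, yields the displayed estimate.

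The same device handles the ridge difficulty you flagged. Write $\ell_i=(z\cdot e_i-c_ft+\tau_i)/(e_i\cdot e_0)$. Your weight $e^{-v'\min_i\ell_i}=\max_i e^{-v'\ell_i}$ has convex kinks along the ridges, so $-\Delta\phi$ has a negative singular part there. Shrinking $v'$ does not help, because that term is proportional to $v'$, exactly like the gain $c_f(e_i\cdot e_0)v'$. The paper instead measures the weight from the smooth surface $y=\varphi(\alpha t,\alpha x,\alpha)/\alpha$, whose curvature terms are $O(\alpha h)$, and takes $\alpha$ small.
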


\section{Existence and uniqueness}
\noindent
This section aims to prove the proof of Theorem \ref{Theorem 2.1}, that is, we use $\left\{\left(e_i, \tau_i\right)\right\}_{i\in\{1, \ldots, n\}}$ to construct an entire solution. Since the equation \eqref{1.1} remains unchanged under coordinate rotation, without loss of generality, we assume that $e_0=(0,0, \ldots, 1)$. Define $y:= z_N$, then \eqref{1.1} can be rewritten as
\begin{equation}\label{3.1}
\partial_t u-\Delta_{x, y} u=f(u), \quad(t, x , y) \in \mathbb{R}\times\mathbb{R}^{N-1}\times \mathbb{R}.
\end{equation}
\subsection{A hypersurface with asymptotic planes}
\noindent
Take $n$ $(\geq 2)$ unit vectors $\nu_i \in \mathbb{S}^{N-2}$ and $n$ angles $\theta_i\in (0, \pi / 2]$ $(i=1, \ldots, n)$ such that $(\nu_i,\theta_i)\neq(\nu_j,\theta_j)$ for any $i \neq j$. Define $e_i=\left(\nu_i \cos \theta_i, \sin \theta_i\right)$ for all $i \in\{1, \ldots, n\}$ and recall that $\tau_i$ $(i=1,2, \ldots, n)$ are constants. Take a positive constant $\alpha$, which will be used as a scaling parameter in this paper. For any $(t, x, y) \in\mathbb{R}\times \mathbb{R}^{N-1}\times \mathbb{R}$ and all $i \in\{1, \ldots, n\}$, define
\begin{equation*}
q_i(t, x, y, \alpha)=x \cdot \nu_i \cos \theta_i+y \sin \theta_i-c_f t+\alpha \tau_i .
\end{equation*}
Associated to each $q_i$ is the hyperplane in $\mathbb{R}\times\mathbb{R}^N$,
\begin{equation*}
Q_i:=\left\{(t, x, y) \in \mathbb{R} \times \mathbb{R}^N ; q_i(t, x, y, \alpha)=0\right\}.
\end{equation*}
Note that $q_i$ is the signed distance function to $Q_i$ and $\left\{y=\psi_i(t, x, \alpha)\right\}$ is a graph in the $y$-direction, where
\begin{equation*}
\psi_i(t, x, \alpha)=-x \cdot \nu_i \cot \theta_i+\frac{c_f}{\sin \theta_i} t-\frac{\alpha}{\sin \theta_i} \tau_i .
\end{equation*}
Let $\mathcal{Q}$ be the polytope enclosed by $Q_i(i=1,2, \ldots, n)$, namely,
\begin{equation*}
\mathcal{Q}:=\left\{(t, x, y) \in \mathbb{R}\times\mathbb{R}^N ; \min _{1 \leq i \leq n} q_i(t, x, y, \alpha) \geq 0\right\}.
\end{equation*}
Define $\partial \mathcal{Q}$ is the boundary of $\mathcal{Q}$ and $\partial \mathcal{Q}$ has the form $\{y=\psi(x)\}$, where
\begin{equation*}
\psi(t, x, \alpha):=\max _{1 \leq i \leq n} \psi_i(t, x, \alpha).
\end{equation*}
Hence, $\partial \mathcal{Q}=\cup_{i=1}^n \widetilde{Q}_i$, where $\widetilde{Q}_i=\partial \mathcal{Q} \cap Q_i$ are its facets. Let $G_{i j}:= \widetilde{Q}_i \cap \widetilde{Q}_j$ $(i \neq j)$ be the ridge as the intersection of $\widetilde{Q}_i$ and $\widetilde{Q}_j$ for $i \neq j$. Define $\widehat{G}$ as the projection of $G$ on the $(t, x)$-plane, namely, 
\begin{equation*}
\widehat{G}:=\left\{(t, x)\in  \mathbb{R}\times\mathbb{R}^{N-1} ; \text { there exists one } y \in \mathbb{R} \text { such that }(t, x, y) \in G \right\}.
\end{equation*}
Let $\widehat{Q}_i$ be the projection of $\widetilde{Q}_i$ on the $(t, x)$-plane, that is,
\begin{equation*}
\begin{aligned}
	\widehat{Q}_i & :=\left\{ (t, x)\in  \mathbb{R}\times\mathbb{R}^{N-1} ; \text { there exist } y \in \mathbb{R} \text { such that }(t, x, y) \in \widetilde{Q}_i\right\} \\
	& =\left\{(t, x) \in  \mathbb{R}\times \mathbb{R}^{N-1} ; \psi_i(t, x, \alpha)=\max _{1 \leq j \leq n} \psi_j(t, x, \alpha)\right\} .
\end{aligned}
\end{equation*}
It follows from the graph properties of $\partial \mathcal{Q}$ that $\cup_{i=1}^n \partial \widehat{Q}_i=\widehat{G}$ and $\cup_{i=1}^n \widehat{Q}_i=\mathbb{R}\times\mathbb{R}^{N-1}$.
Let $y=\varphi(t, x, \alpha)$ be the surface determined by 
\begin{equation}\label{3.2}
	\sum_{i=1}^n e^{-q_i(t, x,  y, \alpha)}=1.
\end{equation}
By the implicit function theorem, we know such a function exists and $\varphi \in C^{\infty}\left(\mathbb{R}\times\mathbb{R}^{N-1}\right)$. Define $\Sigma :=\{y=\varphi(t, x, \alpha)\}$ as the graph of $\varphi$, $\hat{q}_i(t, x, \alpha):=q_i(t, x, \varphi(t, x, \alpha))$ and 
\begin{equation*}
h(t, x, \alpha):=\sum_{i, j \in\{1, \ldots, n\} ; i\neq j} e^{-\left(\hat{q}_i(t, x, \alpha)+\hat{q}_j(t, x, \alpha)\right)} \quad\text{for } (t, x)\in\mathbb{R}\times\mathbb{R}^{N-1},
\end{equation*}
which implies $h(t, x, \alpha)$ is a measurement of flatness for the graph $\Sigma$. It should be pointed out that the above construction of the hypersurface comes from \cite{H. Guo2024}. According to \cite{H. Guo2024}, the surface $\Sigma :=\{y=\varphi(t, x, \alpha)\}$ has the following properties.
\begin{lemma}\cite[Lemma 3.1]{H. Guo2024}\label{Lemma 3.1}
	The graph $\Sigma$ satisfies the following properties:
\begin{description}
\item[(i)] $\Sigma \subset \mathcal{Q}$;
\item[(ii)] $\Sigma$ stays at finite distance from $\partial \mathcal{Q}$, or equivalently $\sup _{(t, x) \in\mathbb{R}\times\mathbb{R}^{N-1}}|\varphi-\psi|<+\infty$;
\item[(iii)] there is a constant $C>0$ such that
	\begin{equation}\label{3.3}
		|\varphi(t, x, \alpha)-\psi(t, x, \alpha)| \leq C \exp \left\{-\frac{1}{C} d((t, x), \widehat{G})\right\} \text { or } C h(t, x, \alpha), \quad \forall (t, x) \in \mathbb{R}\times\mathbb{R}^{N-1}.
	\end{equation}
\end{description}
\end{lemma}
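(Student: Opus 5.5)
The plan is to work directly with the defining relation \eqref{3.2}. Fix $(t,x)$ and write $a_i:=x\cdot\nu_i\cos\theta_i-c_ft+\alpha\tau_i$, so that $q_i=a_i+y\sin\theta_i$. The map $y\mapsto F(y):=\sum_{i=1}^n e^{-q_i(t,x,y,\alpha)}$ is smooth and strictly decreasing, since $\sin\theta_i>0$. It tends to $+\infty$ as $y\to-\infty$ and to $0$ as $y\to+\infty$. Hence $\varphi$ is the unique root of $F=1$. In terms of $\psi$, the condition $q_i\ge 0$ is equivalent to $y\ge\psi_i$, so $\min_i q_i\ge0$ is equivalent to $y\ge\psi$.

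For (i): at $y=\varphi$ every term satisfies $e^{-\hat q_i}\le \sum_j e^{-\hat q_j}=1$. Hence $\hat q_i\ge0$ for all $i$, that is, $\varphi\ge\psi$, and this says $\Sigma\subset\mathcal Q$.

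For (ii): let $s_{\min}=\min_i\sin\theta_i>0$. At $y=\psi+\delta$ every $q_i\ge \delta s_{\min}$, so $F(\psi+\delta)\le n e^{-\delta s_{\min}}$. This is less than $1$ once $\delta>\ln n/s_{\min}$. By monotonicity of $F$, we get $\psi\le\varphi\le\psi+\ln n/s_{\min}$.

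For (iii), the key point is a quantitative version of the same comparison. Let $i_0$ be an index with $\psi_{i_0}=\psi$ at $(t,x)$, so that $q_{i_0}(\psi)=0$. Since $\hat q_i\ge 0$ for all $i$, the identity $1=\sum_i e^{-\hat q_i}$ gives
\begin{equation*}
1-e^{-\hat q_{i_0}}=\sum_{i\ne i_0}e^{-\hat q_i}.
\end{equation*}
Because $\hat q_{i_0}=(\varphi-\psi)\sin\theta_{i_0}$ and, by (ii), $\varphi-\psi$ is bounded, we have $1-e^{-\hat q_{i_0}}\ge c(\varphi-\psi)$ for some $c>0$. Hence
\begin{equation*}
\varphi-\psi\le C\sum_{i\ne i_0}e^{-\hat q_i}.
\end{equation*}
Multiplying each term by $e^{-\hat q_{i_0}}$, which is bounded below by $e^{-\ln n\,\sin\theta_{i_0}/s_{\min}}>0$ thanks to (ii), shows that $\sum_{i\ne i_0}e^{-\hat q_i}\le C' h(t,x,\alpha)$. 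This gives the bound $C h$.

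For the exponential bound, I would show that $\hat q_i\ge q_i(\psi)\ge \kappa\, d((t,x),\widehat G)$ for every $i\ne i_0$. This is the main obstacle, because it is a purely geometric fact about the piecewise-linear function $\psi$. If $(t,x)$ lies at distance $d$ from $\widehat G$, then the whole ball $B_d((t,x))$ lies inside the single region $\widehat Q_{i_0}$; the point is that $(t,x)$ must lie in the interior of $\widehat Q_{i_0}$ and in no other $\widehat Q_j$, since otherwise it would belong to $\partial\widehat Q_{i_0}\subset\widehat G$. So on this ball $\psi_{i_0}\ge\psi_i$ for each $i\ne i_0$. Now $\psi_{i_0}-\psi_i$ is affine with nonzero gradient, because the pairs $(\nu_i,\theta_i)$ are distinct and so the $Q_i$ are not parallel. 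An affine function with nonzero gradient that is nonnegative on a ball of radius $d$ is at least $|\nabla(\psi_{i_0}-\psi_i)|\,d$ at the center, giving the linear growth in $d$. If $\psi_{i_0}-\psi_i$ has zero $x$-gradient but nonzero $t$-coefficient, the same argument works in the $(t,x)$ variables. Since there are finitely many pairs, the constant $\kappa$ is uniform. Then $q_i(\psi)=\sin\theta_i(\psi-\psi_i)\ge s_{\min}\kappa d$, and since $\hat q_i\ge q_i(\psi)$, the bound $\varphi-\psi\le C\sum_{i\ne i_0}e^{-\hat q_i}\le C e^{-d/C}$ follows, which proves \eqref{3.3}.
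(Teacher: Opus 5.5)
Your proof is correct. The paper does not prove this lemma itself; it imports it from the work of Guo and Wang, so there is no in-paper argument to compare against. Your argument is a clean self-contained proof. Writing $q_i=\sin\theta_i\,(y-\psi_i)$ reduces (i) and (ii) to the monotonicity of $y\mapsto\sum_i e^{-q_i}$ and the bound $n e^{-s_{\min}\delta}<1$. The identity $1-e^{-\hat q_{i_0}}=\sum_{i\neq i_0}e^{-\hat q_i}$, combined with the lower bound on $e^{-\hat q_{i_0}}$ coming from (ii), gives the $Ch$ estimate. The linear growth $\psi-\psi_i\ge\kappa\, d((t,x),\widehat{G})$ for $i\neq i_0$ gives the exponential one. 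A useful by-product is that your constants depend only on $n$, $c_f$ and the angles and directions, not on $\alpha$ or $\boldsymbol{\tau}$.

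Two details should be tightened.

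First, the reason you give for $B_d((t,x))\subset\widehat{Q}_{i_0}$ only concerns the center. What is needed is that every point of the open ball lies outside $\widehat{G}$, hence in exactly one of the closed sets $\widehat{Q}_j$. The sets $\widehat{Q}_j\cap B_d((t,x))$ are then pairwise disjoint, relatively closed, and cover the ball, so connectedness leaves a single index.

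Second, $\nabla_{(t,x)}(\psi_i-\psi_j)\neq0$ follows from $e_i\neq e_j$ (the hypothesis of Theorem \ref{Theorem 2.1}), not from $(\nu_i,\theta_i)\neq(\nu_j,\theta_j)$. If $\theta_i=\theta_j=\pi/2$ and $\nu_i\neq\nu_j$, then $\psi_i-\psi_j\equiv\alpha(\tau_j-\tau_i)$ is constant, and for $\tau_i\neq\tau_j$ the exponential bound in \eqref{3.3} can fail. Under $e_i\neq e_j$ there are two cases. Either $\sin\theta_i\neq\sin\theta_j$, which gives a nonzero $t$-component. Or the sines agree, hence so do the cosines, and then $\nu_i\cot\theta_i\neq\nu_j\cot\theta_j$. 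In both cases $\kappa=\min_{i\neq j}|\nabla_{(t,x)}(\psi_i-\psi_j)|>0$, as you need.
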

\begin{lemma}\cite[Lemma 3.2]{H. Guo2024} \label{Lemma 3.2}
There exists a constant $C_{1}>0$ independent of $\alpha$ and $\tau_i$ such that for each $i \in\{1, \ldots, n\}$,
\begin{equation*}
 	 \left|\partial_t \varphi-\frac{c_f}{\sin \theta_i}\right|+\left|\nabla \varphi+ \nu_i \cot \theta_i\right| \leq C_{1} h \quad\text{in } \widehat{Q}_i, 
\end{equation*}
\begin{equation*}
 	 \frac{1}{C_{1}} h \leq \frac{\partial_t \varphi}{\sqrt{1+|\nabla \varphi|^2}}-c_f \leq C_{1} h \quad\text{in } \mathbb{R} \times \mathbb{R}^{N-1},
\end{equation*}
 and 
\begin{equation*}
 \left|\nabla \partial_t \varphi\right|+\left|\nabla^2 \varphi\right|+\left|\nabla^3 \varphi\right| \leq C_{1} h \quad\text{in } \mathbb{R} \times \mathbb{R}^{N-1} .
\end{equation*}
\end{lemma}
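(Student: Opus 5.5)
The statement to prove is Lemma \ref{Lemma 3.2}. I would differentiate the defining identity \eqref{3.2} implicitly. Set $w_i:=e^{-\hat q_i}>0$, so that $\sum_i w_i=1$. Differentiating $\sum_i e^{-q_i(t,x,\varphi)}=1$ in $t$ and in $x$ gives
\begin{equation*}
\partial_t\varphi=\frac{c_f}{\sum_i w_i\sin\theta_i},\qquad \nabla\varphi=-\frac{\sum_i w_i\nu_i\cos\theta_i}{\sum_i w_i\sin\theta_i}.
\end{equation*}
Since every $\theta_i\in(0,\pi/2]$, the denominator $\sum_i w_i\sin\theta_i$ is bounded below by $\min_i\sin\theta_i>0$. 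This lower bound is independent of $\alpha$ and $\tau_i$, because $\alpha\tau_i$ only shifts the $q_i$.

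\emph{First estimate.} The key observation is that on $\widehat Q_i$ we have $\hat q_i\le\hat q_j$ for all $j$; indeed $\psi_i=\max_j\psi_j$ is equivalent to $q_i$ being minimal at a common $y$. Hence $w_i\ge 1/n$ there, and for $j\ne i$,
\begin{equation*}
w_j\le n\,w_iw_j\le n\,h.
\end{equation*}
Consequently $1-w_i=\sum_{j\neq i}w_j=O(h)$. Substituting into the formulas above, $\partial_t\varphi-c_f/\sin\theta_i$ and $\nabla\varphi+\nu_i\cot\theta_i$ are differences of smooth quotients whose arguments differ by $O(h)$. This gives the first estimate.

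\emph{Second estimate.} A direct computation gives
\begin{equation*}
\frac{\partial_t\varphi}{\sqrt{1+|\nabla\varphi|^2}}=\frac{c_f}{|\sum_i w_ie_i|}.
\end{equation*}
Since $\sum_i w_i=1$ and $|e_i|=1$, we have
\begin{equation*}
1-\Bigl|\sum_i w_ie_i\Bigr|^2=\sum_{i\neq j}w_iw_j\,(1-e_i\cdot e_j)=\tfrac12\sum_{i\ne j}w_iw_j|e_i-e_j|^2.
\end{equation*}
The $e_i$ are distinct, so $|e_i-e_j|^2$ is bounded above and below by positive constants. Therefore $1-|\sum_i w_ie_i|\asymp h$, and since $|\sum_i w_ie_i|$ lies in $[\min_i\sin\theta_i,1]$, the quantity $c_f/|\sum_i w_ie_i|-c_f$ is comparable to $h$. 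This yields the two-sided bound with some constant $C_1$.

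\emph{Third estimate.} This is where I expect the main work. I would differentiate the formulas for $\partial_t\varphi$ and $\nabla\varphi$ once more. Each derivative of $w_j$ produces
\begin{equation*}
-w_j\,\partial\hat q_j=-w_j\,\bigl(e_j\cdot(\cdot)\text{-terms involving }\partial\varphi\bigr).
\end{equation*}
So $\nabla w_j=w_j\,(a_j+b_j\nabla\varphi)$ with bounded coefficients, and hence
\begin{equation*}
\nabla^2\varphi=\sum_j w_j\,(\text{bounded})\,\bigl(g_j-\textstyle\sum_k w_kg_k\bigr),
\end{equation*}
where $g_j$ denotes the $j$-th plane's slope data. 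The key point is that every term contains a factor of the form $w_j(g_j-\bar g)$ with $\bar g=\sum_kw_kg_k$. Writing
\begin{equation*}
g_j-\bar g=\sum_k w_k(g_j-g_k)
\end{equation*}
produces a product $w_jw_k$ with $k\ne j$ (the $k=j$ terms vanish), which is bounded by $h$. The same structure persists for $\nabla\partial_t\varphi$ and for $\nabla^3\varphi$: differentiating a term that is already $O(h)$ and of the form $w_jw_k\times(\text{smooth})$ keeps at least one cross product $w_jw_k$ with $j\ne k$ (by the product rule and $|\nabla w|\lesssim w$). One must carefully organize the terms so that a mixed product always survives. All constants depend only on $n$, $c_f$, $\min_i\sin\theta_i$ and $\min_{i\ne j}|e_i-e_j|$, hence not on $\alpha$ or $\tau$.
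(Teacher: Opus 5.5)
The paper gives no proof of this lemma (it is quoted from \cite{H. Guo2024}), so your argument has to stand on its own. Most of it does. The following steps are all correct:
\begin{itemize}
\item the implicit-differentiation formulas;
\item the identity $\partial_t\varphi/\sqrt{1+|\nabla\varphi|^2}=c_f/|\sum_i w_ie_i|$, together with $1-|\sum_iw_ie_i|^2=\frac12\sum_{i\neq j}w_iw_j|e_i-e_j|^2$, which gives the two-sided second estimate once $e_i\neq e_j$;
\item the bookkeeping for the third estimate.
\end{itemize}
For the third estimate, the cleanest form is $\nabla^2\varphi=S^{-1}\sum_l w_l\,\nabla\hat q_l\otimes\nabla\hat q_l$, with $S=\sum_l w_l\sin\theta_l$ and $\nabla\hat q_l=S^{-1}\sum_{m\neq l}w_m(\sin\theta_m\,\nu_l\cos\theta_l-\sin\theta_l\,\nu_m\cos\theta_m)$. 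Each term then visibly carries a factor $w_lw_m$ with $m\neq l$. Your average $\bar g$ must be weighted by $w_k\sin\theta_k/S$ rather than $w_k$, but this changes nothing essential.

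The genuine error is in the first estimate. You claim that $\hat q_i\le\hat q_j$ on $\widehat Q_i$, hence $w_i\ge 1/n$. This fails in general once the angles $\theta_j$ are not all equal.

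Since $q_j=\sin\theta_j\,(y-\psi_j)$, the condition $\psi_i=\max_j\psi_j$ only says that $q_i$ is minimal at $y=\psi$. At $y=\varphi>\psi$ one has $\hat q_j=\sin\theta_j(\varphi-\psi_j)$, and the different slopes $\sin\theta_j$ can reverse the order.

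For example, take $n=2$, $\sin\theta_1=1$, $\sin\theta_2=1/10$. On the ridge $\psi_1=\psi_2$, which belongs to $\widehat Q_1$, the quantity $d=\varphi-\psi$ solves $e^{-d}+e^{-d/10}=1$. So $d\approx 1.8$, giving $\hat q_1\approx 1.8>\hat q_2\approx 0.18$ and $w_1\approx 0.17<1/2$.

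What you actually need is only a positive lower bound for $w_i$ on $\widehat Q_i$, uniform in $\alpha$ and $\tau$. This holds for a different reason:
\begin{itemize}
\item Since $\varphi-\psi_j\ge\varphi-\psi\ge 0$ for every $j$, \eqref{3.2} gives $1\le n\,e^{-(\min_j\sin\theta_j)(\varphi-\psi)}$, i.e.\ $0\le\varphi-\psi\le \ln n/\min_j\sin\theta_j$.
\item On $\widehat Q_i$ one has $\hat q_i=\sin\theta_i(\varphi-\psi)$, hence $w_i\ge n^{-1/\min_j\sin\theta_j}$.
\item Then $w_j\le n^{1/\min_j\sin\theta_j}\,w_iw_j\le n^{1/\min_j\sin\theta_j}\,h$ for $j\neq i$.
\end{itemize}
With this bound in place, the remainder of your first estimate goes through unchanged.
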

As a matter of fact, one has $\min _{1 \leq i \leq n} q_i(t, x, y, \alpha) \geq 0$ for every point $(x, y) \in \Sigma$ by \eqref{3.2}. Besides, $\varphi(t, x, \alpha) \geq \psi_i(t, x, \alpha)$ for any fixed $i \in\{1, \ldots, n\}$ and $(t, x) \in \widehat{Q}_i$ by (i) of Lemma \ref{Lemma 3.1} and the definition of $\mathcal{Q}$. Thus, by definitions of $\psi(t, x, \alpha)$ and $\widehat{Q}_i$, we obtain $\varphi(t, x, \alpha) \geq \psi(t, x, \alpha)$ in $\mathbb{R}\times\mathbb{R}^{N-1}$. By \cite[Section 3.1]{H. Guo2024}, $h(t, x, \alpha)$ is decaying exponentially in all $\widehat{Q}_i$ as $d((t, x), \widehat{G}) \rightarrow+\infty$, namely, $\hat{q}_i(t, x, \alpha) \rightarrow 0$ and $\hat{q}_j(t, x, \alpha) \rightarrow+\infty$ for all $j \neq i$ and $(t, x) \in \widehat{Q}_i$ as $\operatorname{dist}((t, x), \widehat{G}) \rightarrow+\infty$. Moreover, the surface $y=\varphi(t, x, \alpha)$ has the following properties.
\subsection{Construction of supersolutions}
\noindent
We define two functions $\xi$ and $\eta$, where
\begin{equation}\label{3.7}
	\xi(t, x, y)=\frac{y-\frac{1}{\alpha} \varphi(\alpha t, \alpha x, \alpha)}{\sqrt{1+|\nabla \varphi(\alpha t, \alpha x, \alpha)|^2}} \text{ and } \eta(t, x, y)=y-\frac{1}{\alpha} \varphi(\alpha t, \alpha x, \alpha).
\end{equation}
By direct calculation, one has
\begin{equation*}
-\partial_t \xi-c_f=\frac{\partial_t \varphi}{\sqrt{1+|\nabla \varphi|^2}}-c_f+\alpha \xi \frac{\nabla \varphi \cdot \nabla \partial_t \varphi}{1+|\nabla \varphi|^2}.
\end{equation*}
Moreover, one has the following lemma.
\begin{lemma}\label{Lemma 3.3}\cite[Lemma 3.4]{H. Guo2024}
There exists a positive constant $C_{2}$ independent of $\alpha$ such that for $(t, x, y) \in \mathbb{R} \times \mathbb{R}^N$,
\begin{equation*}
	 \frac{1}{C_{2}} h(\alpha t, \alpha x, \alpha) \leq \frac{\partial_t \varphi}{\sqrt{1+|\nabla \varphi|^2}}-c_f \leq C_{2} h(\alpha t, \alpha x, \alpha),
\end{equation*}
\begin{equation*}
	\left|\frac{\alpha \nabla \varphi \cdot \nabla \partial_t \varphi}{1+|\nabla \varphi|^2} \xi\right| \leq C_{2} \alpha h(\alpha t, \alpha x, \alpha)|\xi|,
\end{equation*}
\begin{equation*}
		 \left|1-\left|\nabla_x \xi\right|^2-\left|\partial_y \xi\right|^2\right| \leq C_{2} \alpha h(\alpha t, \alpha x, \alpha)\left(|\xi|+\xi^2\right) 
\end{equation*}
and
\begin{equation*}
		 \left|\Delta_x \xi\right| \leq C_{2} \alpha h(\alpha t, \alpha x, \alpha)(1+|\xi|+\alpha|\xi|).
\end{equation*}
\end{lemma}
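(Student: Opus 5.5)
The plan is a direct chain-rule computation. Every derivative of $\xi$ is written in terms of $\varphi$ and its derivatives evaluated at $(\alpha t,\alpha x,\alpha)$, and each term is then bounded with Lemma \ref{Lemma 3.2}. Two facts are used throughout.

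\emph{Bound on $h$.} From \eqref{3.2}, $\sum_i e^{-\hat q_i}=1$, so $e^{-\hat q_i}\le 1$ for every $i$. Hence $0<h\le n(n-1)$.

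\emph{Restriction on $\alpha$.} I assume $\alpha\in(0,1]$, as the scaling parameter is meant to be small. This lets me absorb terms like $\alpha^2h^2$ into $C\alpha h$. Without this restriction the ``independent of $\alpha$'' claim would have to be weakened, since $\alpha^2$ terms really do appear (see below).

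Write $s:=\sqrt{1+|\nabla\varphi|^2}$, evaluated at $(\alpha t,\alpha x)$, so that $\xi=\eta/s$. Also $|\nabla\varphi|/s\le 1$ and $|\nabla\varphi|/s^2\le 1/2$.

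\textbf{Time derivative.} We have $\partial_t\eta=-\partial_t\varphi$ and $\partial_t s=\alpha\,\nabla\varphi\cdot\nabla\partial_t\varphi/s$. This gives the identity stated before the lemma:
$$-\partial_t\xi-c_f=\frac{\partial_t\varphi}{s}-c_f+\alpha\xi\frac{\nabla\varphi\cdot\nabla\partial_t\varphi}{s^2}.$$
The first inequality of the lemma is exactly the second estimate of Lemma \ref{Lemma 3.2}, applied at the point $(\alpha t,\alpha x)$. For the second inequality I use $|\nabla\varphi|/s^2\le 1/2$ together with $|\nabla\partial_t\varphi|\le C_1h$ from Lemma \ref{Lemma 3.2}. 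This gives $\bigl|\alpha\xi\,\nabla\varphi\cdot\nabla\partial_t\varphi/s^2\bigr|\le \tfrac{C_1}{2}\alpha h|\xi|$.

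\textbf{Gradient.} We have $\partial_y\xi=1/s$. In $x$, $\nabla_x\eta=-\nabla\varphi$ and $\nabla_x s=\alpha\nabla^2\varphi\nabla\varphi/s$, so
$$\nabla_x\xi=-\frac{\nabla\varphi}{s}-\alpha\xi\frac{\nabla^2\varphi\nabla\varphi}{s^2}.$$
Squaring and adding $(\partial_y\xi)^2$, the leading terms give $|\nabla\varphi|^2/s^2+1/s^2=1$. What remains is
\begin{itemize}
\item a cross term $2\alpha\xi\,\nabla\varphi^{T}\nabla^2\varphi\nabla\varphi/s^3$, bounded by $C\alpha h|\xi|$;
\item a term $\alpha^2\xi^2|\nabla^2\varphi\nabla\varphi|^2/s^4$, bounded by $C\alpha^2h^2\xi^2$.
\end{itemize}
Using $|\nabla^2\varphi|\le C_1h$, $h\le n(n-1)$ and $\alpha\le1$, the second term is at most $C\alpha h\xi^2$. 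This gives the third inequality.

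\textbf{Laplacian.} Expand
$$\Delta_x\xi=\frac{\Delta_x\eta}{s}+2\nabla_x\eta\cdot\nabla_x(s^{-1})+\eta\,\Delta_x(s^{-1}),$$
and estimate the three pieces separately.
\begin{itemize}
\item $\Delta_x\eta=-\alpha\Delta\varphi$, which is bounded by $C\alpha h$.
\item $\nabla_x(s^{-1})=-\alpha\nabla^2\varphi\nabla\varphi/s^3$. Combined with $|\nabla_x\eta|=|\nabla\varphi|$ and $|\nabla\varphi|^2/s^3\le1$, the middle piece is bounded by $C\alpha h$.
\item $\Delta_x(s^{-1})$ is $\alpha^2$ times a combination of $\nabla^3\varphi\cdot\nabla\varphi$ and $|\nabla^2\varphi|^2$ terms, each divided by suitable powers of $s$. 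Writing $\eta=s\xi$, the last piece is therefore bounded by $C\alpha^2(h+h^2)|\xi|\le C\alpha h\cdot\alpha|\xi|$.
\end{itemize}
This produces the form $C_2\alpha h(1+|\xi|+\alpha|\xi|)$. The extra $|\xi|$ slot absorbs any leftover $\alpha h|\xi|$ terms, for instance if one keeps only $\alpha\le 1$ in some of them.

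\textbf{Main obstacle.} The only delicate step is bookkeeping the powers of $s$ in $\Delta_x(s^{-1})$. Each term must have enough powers of $s$ in the denominator to absorb the factors of $|\nabla\varphi|$ in the numerator. The stated form of the bound shows that this works, and checking it is a routine differentiation, so I expect no genuine difficulty.
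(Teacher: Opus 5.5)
Your proposal is correct and is essentially the argument behind the cited result. The paper gives no proof of its own (it quotes \cite[Lemma 3.4]{H. Guo2024}), but the identity for $-\partial_t\xi-c_f$ and formula \eqref{3.18} that it records are exactly your chain-rule expressions, and each estimate then follows from Lemma~\ref{Lemma 3.2} together with the boundedness of $h$ and of $|\nabla\varphi|$. Your restriction $\alpha\in(0,1]$ is genuinely needed to absorb the $\alpha^2h^2\xi^2$ term in the third estimate, and it is consistent with how the paper uses the lemma, where it takes $0<\alpha<1$.
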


Define a smooth function $\omega(s)\in C^\infty(\mathbb{R})$ satisfying $\omega^{\prime}(s) \geq 0$ and
\begin{equation}\label{3.12}
	\begin{cases}\omega(s)=0,  \text { if } s \leq-1, \\ 0<\omega(s)<1,  \text { if } s \in(-1,1), \\ \omega(s)=1,  \text { if } s \geq 1.\end{cases}
\end{equation}
For the rest of this paper, we denote $\zeta:=\alpha x$ and $\zeta_i:=\alpha x_i$ for any $i \in\{1, \ldots, N-1\}$.
\begin{lemma}\label{Lemma 3.4}
   There exists a constant $\beta^*>0$ such that for any $\beta \in\left(0, 
    \beta^*\right]$ there exist positive constants $\varepsilon_0^{+}(\beta)$ and $\alpha_0^{+}(\beta, 
    \varepsilon)$ such that for any 
    $0<\varepsilon<\varepsilon_0^{+}(\beta)$ and any $0<\alpha<\alpha_0^{+}(\beta, \varepsilon)$, the function
	\begin{equation}\label{3.13}
	\overline{V}(t, x, y):=\min \left\{U(\xi)+\varepsilon h(\alpha t, \alpha x, \alpha) \times\left[U^\beta(\eta) \omega(\eta)+(1-\omega(\eta))\right], 1\right\}
    \end{equation}
	is a supersolution of Eq.\eqref{3.1}. Moreover,
 \begin{equation}\label{3.14}
 		\left|\bar{V}(t, x, y)-\underline{V}(t, x, y)\right| \leq 2\varepsilon \quad\text {uniformly as } d\left((t, x, y), 
 \mathcal{R}\right) \rightarrow+\infty,
 \end{equation}
 and
	\begin{equation}\label{3.15}
		\bar{V}(t, x, y) \geq \underline{V}(t, x, y) \quad \text{in}\  \mathbb{R}\times\mathbb{R}^N.
	\end{equation}

\end{lemma}
\begin{proof} We divide the proof into three steps.

{\it Step 1: proof of $\bar{V}(t, x, y)$ being a supersolution.} To show that $\bar{V}(t, x, y)$ is a supersolution, we only to prove the domain where $\bar{V}(t, x, y)<1$. Our approach is to find 
two numbers $X^{\prime}>1$ and $X^{\prime \prime}>1$, and prove the inequality
	\begin{equation*}
	\mathcal{L} \bar{V}(t, x, y):=\partial_t \bar{V}(t, x, y)-\Delta_{x, y} \bar{V}(t, x, y)-f\left(\bar{V}(t, x, y)\right) \geq 0, \quad \forall(t, x, y) \in \mathbb{R}\times\mathbb{R}^N,
	\end{equation*}
	in three cases $\eta>X^{\prime}$, $\eta<-X^{\prime \prime}$, and $\eta \in\left[-X^{\prime \prime}, X^{\prime}\right]$, respectively. As a matter of convenience, denote
	\begin{equation*}
	I_1:=\left(\partial_t-\Delta_{x, y}\right)U(\xi) \text { and } I_2:=\left(\partial_t-\Delta_{x, y}\right)\left(\varepsilon h(\alpha t, \alpha x, \alpha) \times U^\beta(\eta)\right).
    \end{equation*}
	Then we can obtain that
	\begin{equation}\label{3.17}
		\begin{aligned}
			I_1= &  U^{\prime} (\xi)\partial_t \xi
			- U^{\prime} (\xi)\left(\sum_{k=1}^{N-1}\partial_{x_{k}x_{k}} \xi+\partial_{yy} \xi\right)- U^{\prime\prime} (\xi)\left(\sum_{k=1}^{N-1}(\partial_{x_{k}} \xi)^2+(\partial_{y} \xi)^2\right)\\
			=& U^{\prime} (\xi)(\partial_t \xi +c_{f})- U^{\prime} (\xi)\left(\sum_{k=1}^{N-1}\partial_{x_{k}x_{k}} \xi+\partial_{yy} \xi\right)\\
			&- U^{\prime\prime} (\xi)\left(\sum_{k=1}^{N-1}(\partial_{x_{k}} \xi)^2+(\partial_{y} \xi)^2-1\right)+f(U(\xi)),
		\end{aligned}
	\end{equation}
where
\begin{align}\label{3.18}
\partial_y \xi  
=\frac{1}{\sqrt{1+|\nabla \varphi|^2}}\quad\text{and}\quad
\nabla_x \xi  =-\frac{\nabla \varphi}{\sqrt{1+|\nabla \varphi|^2}}-\alpha \frac{ \nabla^2 \varphi 
\cdot \nabla \varphi}{1+|\nabla \varphi|^2} \xi.
\end{align}
Besides, by \eqref{1.5}, one has that $\left| U^{\prime}(\xi) \right|,\left| U^{\prime}(\xi)  \xi\right|$, $\left|U^{\prime\prime}(\xi)  \xi\right|$ and 
$\left|U^{\prime\prime}(\xi)  \xi^2\right|$ are bounded for $\forall\xi \in 
\mathbb{R}$.
By calculating $I_2$, we know that
\begin{equation}\label{3.19}
	\begin{aligned}
		I_2= & \varepsilon \alpha \partial_{\alpha t}h(\alpha t, \alpha x, \alpha) U^{\beta}(\eta)+\varepsilon \beta h(\alpha t, \alpha x, \alpha) U^{\beta-1}(\eta) U^{\prime}(\eta) \partial_{t} \eta\\
		&-\varepsilon \alpha^2  U^\beta (\eta) \sum_{k=1}^{N-1}(\partial_{\zeta_{k}\zeta_{k}}h(\alpha t, \alpha x, \alpha))  -2 \varepsilon \beta \alpha  U^{\beta-1} (\eta) U^{\prime} (\eta) \times\left(\sum_{k=1}^{N-1}\partial_{\zeta_{k}}h(\alpha x) \partial_{x_{k}}\eta\right) \\
		& -\varepsilon \beta(\beta-1) h(\alpha t, \alpha x, \alpha) U^{\beta-2}(\eta) (U^{\prime}(\eta))^{2} \times\left(\sum_{k=1}^{N-1}\left(\partial_{x_k}\eta\right)^2+\left(\partial_{y}\eta\right)^2\right) \\
		& -\varepsilon \beta h(\alpha t, \alpha x, \alpha) U^{\beta-1}(\eta) U^{\prime \prime}(\eta)\times\left(\sum_{k=1}^{N-1}(\partial_{x_k}\eta)^2+(\partial_{y}\eta)^2\right)\\
		& -\varepsilon \beta h(\alpha t, \alpha x, \alpha) U^{\beta-1}(\eta) U^{\prime}(\eta)\times\left(\sum_{k=1}^{N-1}\partial_{x_k x_k}\eta+\partial_{yy}\eta\right).
	\end{aligned}
\end{equation}

\textbf{Case 1:} $\eta(t, x, y)>X^{\prime}$, where $X^{\prime}>1$ is to be chosen.
Thus, $\bar{V}(t, x, y)=U(\xi)+\varepsilon h(\alpha t, \alpha x, \alpha) U^\beta(\eta)$, then
\begin{equation*}
	\mathcal{L} \bar{V}(t, x, y)=I_1+I_2-f\left(\bar{V}\right).
\end{equation*}
Calculating the derivatives of $\eta(t, x, y)$, we get
\begin{equation}\label{3.20}
	\left\{\begin{array}{l}
		\eta_t=-\partial_{\alpha t}\varphi(\alpha t, \alpha x, \alpha),\\
		\eta_y=1,\;\eta_{y y}=0 ,\\
		\eta_{x_k}=-\partial_{\zeta_k}\varphi(\alpha t, \alpha x, \alpha) ,\\
		\eta_{x_k x_k}=-\alpha \partial_{\zeta_k \zeta_k} \varphi(\alpha t, \alpha x, \alpha) ,\\
		\sum_{k=1}^{N-1}\eta_{x_k}^2+\eta_y^2=\sum_{k=1}^{N-1}(\partial_{\zeta_k}\varphi (\alpha t, \alpha x, \alpha))^2+1.
	\end{array}\right.
\end{equation}
By virtue of \eqref{3.19} and \eqref{3.20}, it holds that
\begin{align}\label{3.21}
	I_2= & \varepsilon \alpha \partial_{\alpha t}h(\alpha t, \alpha x, \alpha) U^{\beta}(\eta)+\varepsilon \beta h(\alpha t, \alpha x, \alpha) U^{\beta-1}(\eta) U^{\prime}(\eta) (-\partial_{\alpha t}\varphi(\alpha t, \alpha x, \alpha)) \nonumber\\
	&-\varepsilon \alpha^2  U^\beta(\eta) \sum_{k=1}^{N-1}(\partial_{\zeta_{k}\zeta_{k}}h(\alpha t, \alpha x, \alpha))  -2 \varepsilon \beta \alpha  U^{\beta-1}(\eta) U^{\prime}(\eta) \times\left(\sum_{k=1}^{N-1}\partial_{\zeta_{k}}h(\alpha x) (-\partial_{\zeta_k}\varphi(\alpha t, \alpha x, \alpha))\right) \nonumber\\
	& -\varepsilon \beta(\beta-1) h(\alpha t, \alpha x, \alpha) U^{\beta-2}(\eta) (U^{\prime}(\eta))^{2} \times\left(\sum_{k=1}^{N-1}(\partial_{\zeta_k}\varphi (\alpha t, \alpha x, \alpha))^2+1\right) \nonumber\\
	& -\varepsilon \beta h(\alpha t, \alpha x, \alpha) U^{\beta-1}(\eta) U^{\prime \prime}(\eta)\times\left(\sum_{k=1}^{N-1}(\partial_{\zeta_k}\varphi (\alpha t, \alpha x, \alpha))^2+1\right)\nonumber\\
	& -\varepsilon \beta h(\alpha t, \alpha x, \alpha) U^{\beta-1}(\eta) U^{\prime}(\eta) \times\left(\sum_{k=1}^{N-1}(-\alpha \partial_{\zeta_k \zeta_k} \varphi(\alpha t, \alpha x, \alpha))\right)\nonumber\\
	=& -\varepsilon \alpha U^{\beta}(\eta) \left[\partial_{\alpha t}h(\alpha t, \alpha x, \alpha)+\alpha \sum_{k=1}^{N-1}(\partial_{\zeta_{k}\zeta_{k}}h(\alpha t, \alpha x, \alpha)) \right.\nonumber\\
	&\quad\left.-2  \beta \frac{U^{\prime}(\eta)}{U(\eta)} \left(\sum_{k=1}^{N-1}\partial_{\zeta_{k}}h(\alpha x) (\partial_{\zeta_k}\varphi(\alpha t, \alpha x, \alpha))\right) -\beta h(\alpha t, \alpha x, \alpha) \frac{U^{\prime}(\eta)}{U(\eta)}\sum_{k=1}^{N-1}\alpha \partial_{\zeta_k \zeta_k} \varphi(\alpha t, \alpha x, \alpha)\right]\nonumber\\
	&-\varepsilon\beta  U^{\beta}(\eta) h(\alpha t, \alpha x, \alpha) \left[\frac{U^{\prime}(\eta)}{U(\eta)}\partial_{\alpha t}\varphi(\alpha t, \alpha x, \alpha)+(\beta-1)\frac{(U^{\prime}(\eta))^{2}}{U^{2}(\eta)}\left(\sum_{k=1}^{N-1}(\partial_{\zeta_k}\varphi (\alpha t, \alpha x, \alpha))^2+1\right)\right.\nonumber\\
	&\left.\quad+\frac{U^{\prime \prime}(\eta)}{U(\eta)}\left(\sum_{k=1}^{N-1}(\partial_{\zeta_k}\varphi (\alpha t, \alpha x, \alpha))^2+1\right) \right]\nonumber\\
	=& : J_1+J_2.
\end{align}
Since \eqref{3.7}, Lemma \ref{Lemma 3.2} and the fact $h=\sum_{i, j \in\{1, \ldots, n\} ; i\neq j}e^{-\left(\hat{q}_i+\hat{q}_j\right)}\le 1$, one gets
\begin{equation}\label{3.22}
\xi \leq \eta=\xi \sqrt{1+|\nabla \varphi(\alpha t, \alpha x, \alpha)|^2} \leq \xi \sqrt{(C_{1}+\max_{1\le i\le n} \{|\nu_i \cot \theta_i|\})^2+1}.    
\end{equation}
It follows from \eqref{1.6} and Lemma \ref{Lemma 3.2} that there exists a large enough constant $X_1^{\prime}>1$ such that
\[
\left|\frac{U^{\prime \prime}(\eta)}{U(\eta)}\left(\sum_{k=1}^{N-1}(\partial_{\zeta_k}\varphi (\alpha t, \alpha x, \alpha))^2+1\right)-c_{f}^{2}\left(\sum_{k=1}^{N-1}(\partial_{\zeta_k}\varphi (\alpha t, \alpha x, \alpha))^2+1\right)\right|\leq\frac{c_{f}^{2}}{16},
\]
\[		
\left|c_{f}^{2}\left(\sum_{k=1}^{N-1}(\partial_{\zeta_k}\varphi (\alpha t, \alpha x, \alpha))^2+1\right)-\frac{(U^{\prime}(\eta))^{2}}{U^{2}(\eta)}\left(\sum_{k=1}^{N-1}(\partial_{\zeta_k}\varphi (\alpha t, \alpha x, \alpha))^2+1\right)\right|\leq\frac{c_{f}^{2}}{16},
\]
\[		
\left|\frac{(U^{\prime}(\eta))^{2}}{U^{2}(\eta)}\left(\sum_{k=1}^{N-1}(\partial_{\zeta_k}\varphi (\alpha t, \alpha x, \alpha))^2+1\right)\right|\leq  2 c_{f}^{2}\left(\sum_{k=1}^{N-1}(\partial_{\zeta_k}\varphi (\alpha t, \alpha x, \alpha))^2+1\right)
\]
and
\[
\begin{aligned}
	- c_{f}-\frac{c_{f}}{16}<\frac{U^{\prime}(\eta)}{U(\eta)}< -c_{f}+\frac{c_{f}}{16},
\end{aligned}
\]
for all $\eta \in\left(X_1^{\prime},+\infty\right)$.
Then it is easy to get that
\begin{equation*}
	\begin{aligned}
		&\quad\left|\frac{U^{\prime \prime}(\eta)}{U(\eta)}\left(\sum_{k=1}^{N-1}(\partial_{\zeta_k}\varphi (\alpha t, \alpha x, \alpha))^2+1\right)-\frac{(U^{\prime}(\eta))^{2}}{U^{2}(\eta)}\left(\sum_{k=1}^{N-1}(\partial_{\zeta_k}\varphi (\alpha t, \alpha x, \alpha))^2+1\right)\right|\\
		&\leq\left|\frac{U^{\prime \prime}(\eta)}{U(\eta)}\left(\sum_{k=1}^{N-1}(\partial_{\zeta_k}\varphi (\alpha t, \alpha x, \alpha))^2+1\right)-c_{f}^{2}\left(\sum_{k=1}^{N-1}(\partial_{\zeta_k}\varphi (\alpha t, \alpha x, \alpha))^2+1\right)\right|\\
		&\quad+\left|c_{f}^{2}\left(\sum_{k=1}^{N-1}(\partial_{\zeta_k}\varphi (\alpha t, \alpha x, \alpha))^2+1\right)-\frac{(U^{\prime}(\eta))^{2}}{U^{2}(\eta)}\left(\sum_{k=1}^{N-1}(\partial_{\zeta_k}\varphi (\alpha t, \alpha x, \alpha))^2+1\right)\right|\leq 2\times\frac{c_{f}^{2}}{16}=\frac{c_{f}^{2}}{8}.
	\end{aligned}
\end{equation*}

Define
\begin{equation*}
	\beta_1^*:=\frac{1}{4 \left((C_{1}+\max_{1\le i\le n} \{|\nu_i \cot \theta_i|\})^2+1\right)},
\end{equation*}
then for any $\beta \in\left(0, \beta_1^*\right]$, one has
\begin{equation}\label{3.23}
	\begin{aligned}
		J_2&\ge-\varepsilon \beta U^\beta (\eta) h(\alpha t, \alpha x, \alpha)\left[\left(-c_{f}+\frac{c_{f}}{16}\right)c_{f}+ 2 \beta c_{f}^2\left(\sum_{k=1}^{N-1}(\partial_{\zeta_k}\varphi(\alpha t, \alpha x, \alpha))^2+1\right)+\frac{c_{f}^{2}}{8}\right]\\
		&\ge -\varepsilon \beta U^\beta (\eta) h(\alpha t, \alpha x, \alpha)\left(\frac{c_{f}^{2}}{2 }-\frac{13}{16 }c_{f}^{2} \right)
		> \varepsilon  U^\beta  (\eta) h(\alpha t, \alpha x, \alpha)\times \beta\frac{c_{f}^{2}}{4 }
	\end{aligned}
\end{equation}
for all $\eta \in\left(X_1^{\prime},+\infty\right)$. By Lemma \ref{Lemma 3.3}, there exists a constant $C_3>0$ such that
	\begin{align}\label{3.24}
		\left|\nabla h(\alpha t, \alpha x, \alpha)\right| &=\left|\sum_{i, j \in\{1, \ldots, n\}, i \neq j} 
		e^{-\left(\hat{q}_j+\hat{q}_k\right)} \mid\nu_i \cos \theta_i+\nu_j \cos 
		\theta_j+\nabla \varphi(\alpha t, \alpha x, \alpha)\left(\sin \theta_i+\sin \theta_j\right) \mid\right| \\
		& \leq  C_3 h(\alpha t, \alpha x, \alpha) \nonumber
	\end{align}
and
\begin{equation}\label{3.25}
	\begin{aligned}
		& \left|\Delta h(\alpha t, \alpha x, \alpha)\right| \\
		& =\left|\sum_{i, j \in\{1, \ldots, n\}, i \neq j} e^{-\left(\hat{q}_j+\hat{q}_k\right)}\left|\nu_i \cos \theta_i+\nu_j \cos \theta_j+\nabla \varphi(\alpha t, \alpha x, \alpha)\left(\sin \theta_i+\sin \theta_j\right)\right|^2 \right.\\
		&\quad\left.+\sum_{i, j \in\{1, \ldots, n\}, i \neq j} e^{-\left(\hat{q}_j+\hat{q}_k\right)} \Delta \varphi(\alpha t, \alpha x, \alpha)\left(\sin \theta_i+\sin \theta_j\right)\right|\\
		&\leq  C_3 h(\alpha t, \alpha x, \alpha) .
	\end{aligned}
\end{equation}
Besides, by the definition of $h$, one has $\partial_t h(t, x, \alpha)=2 c_{f} h(t, x, \alpha)$. By \eqref{3.21} and \eqref{3.23}-\eqref{3.25}, there is a sufficiently small constant $\alpha_1^{+}(\beta)>0$ 
such that, for arbitrary $0<\alpha \leq \alpha_1^{+}(\beta)$, it holds that
\begin{equation*}
	\begin{aligned}
	J_1 \geq & -\varepsilon \alpha U^{\beta} (\eta) \left[2 c_{f} h(\alpha t, \alpha x, \alpha)+\alpha  \left|\Delta h(\alpha t, \alpha x, \alpha)\right| +2  \beta \left|\frac{U^{\prime}(\eta)}{U(\eta)}\right| \left(\left|\nabla h(\alpha t, \alpha x, \alpha)\right| \cdot \left|\nabla \varphi(\alpha t, \alpha x, \alpha)\right|\right)\right.\\
	&\quad\left. +\beta \alpha h(\alpha t, \alpha x, \alpha) \left|\frac{U^{\prime}(\eta)}{U(\eta)}\right|  \left|\Delta \varphi(\alpha t, \alpha x, \alpha)\right|\right]\\
	> & - \varepsilon  U^\beta (\eta) h(\alpha t, \alpha x, \alpha)\times \beta\frac{c_{f}^{2}}{8 }
    \end{aligned}
\end{equation*}
and
\begin{equation}\label{3.26}
	I_2=J_1+J_2>\varepsilon  U^\beta (\eta) h(\alpha t, \alpha x, \alpha)\times \beta\frac{c_{f}^{2}}{8 }, \quad\forall\eta \in\left(X_1^{\prime},+\infty\right).
\end{equation}
By virtue of 
$U^{\prime}< 0$, \eqref{1.5}, \eqref{3.17}, \eqref{3.18}, \eqref{3.20}, Lemma \ref{Lemma 3.2} and Lemma \ref{Lemma 3.3}, we have
\begin{equation*}
(\partial_t \xi+c_{f}) U^{\prime}(\xi)\ge  U^{\prime}(\xi) \left(-\frac{1}{C_{2}} h(\alpha t, \alpha x, \alpha)+C_{2} \alpha h(\alpha t, \alpha x, \alpha)|\xi| \right)\ge 
-C_{2} L_2 \alpha h(\alpha t, \alpha x, \alpha)|\xi|  e^{-c_{f} \xi},
\end{equation*}
\begin{equation*}
	-U^{\prime} (\xi)\left(\sum_{k=1}^{N-1}\partial_{x_{k}x_{k}} \xi+\partial_{yy} 
	\xi\right)\ge-\left|U^{\prime} (\xi)\Delta_{x, y}\xi\right|\ge-C_{2} L_2  \alpha h(\alpha t, \alpha x, \alpha)(1+|\xi|+\alpha|\xi|)e^{-c_{f} \xi}
\end{equation*}
and
\begin{equation*}
	-U^{\prime\prime}(\xi)\left(\sum_{k=1}^{N-1}(\partial_{x_{k}} \xi)^2+(\partial_{y} \xi)^2-1\right) \ge- C_{2} L_2  \alpha h(\alpha t, \alpha x, \alpha)\left(|\xi|+\xi^2\right) e^{-c_{f} \xi}
\end{equation*}
for all $\xi \in[0,+\infty)$. According to  \eqref{3.17}, if we set $0<\alpha<1$, then there is a constant $\Lambda_1>0$ such that
\begin{equation}\label{3.27}
	I_1 \geq-\alpha\Lambda_1 h(\alpha t, \alpha x, \alpha) e^{-\frac{c_{f}}{2} \xi}+f\left(U(\xi)\right) , \quad \forall\xi \in[0,+\infty).
\end{equation}
It follows from \eqref{1.5} that there exists a large enough constant $X_2^{\prime}>1$ such that
\begin{equation}\label{3.28}
U(\eta) \geq \frac{L_1}{2} e^{-c_{f} \eta}, \quad \forall\eta \in\left(X_2^{\prime},+\infty\right).
\end{equation}
Set
\begin{equation*}
	\beta_2^*:=\frac{1}{4 \sqrt{(C_1 +\max_{1\le i\le n} \{|\nu_i \cot \theta_i|\})^2+1}}.
\end{equation*}
Since \eqref{3.22} and \eqref{3.28}, for any $\beta \in\left(0, \beta_2^*\right]$, we obtain 
\begin{align}\label{3.29}
U^\beta(\eta) \geq \left(\frac{L_1}{2}\right)^\beta e^{-\beta c_{f} \eta}\geq \left(\frac{L_1}{2}\right)^\beta e^{- \frac{c_{f} \eta}{4 \sqrt{(C_1 +\max_{1\le i\le n} \{|\nu_i \cot \theta_i|\})^2+1}}}\geq\left(\frac{L_1}{2}\right)^\beta e^{-\frac{c_{f}}{4} \xi}
\end{align}
for all $\eta \in\left(X_2^{\prime},+\infty\right)$. By virtue of \eqref{1.2}, \eqref{3.22} and $\bar{V}(t, x, y)=U(\xi)+\varepsilon h(\alpha t, \alpha x, \alpha) U^\beta(\eta)$, if set $\varepsilon<\varepsilon_{1}^{+}:=\frac{\gamma_{\star}}{2}$, then there exists a sufficiently large constant $X_3^{\prime}>1$ such that $U(\xi), \bar{V} \leq 2 \gamma_{\star}$ for $\xi> X_{3}^{\prime}$. Therefore
\begin{align}\label{3.30}
	f\left(U(\xi)\right)-f\left(\bar{V}\right)= 0,\quad\forall\xi \in\left(X_3^{\prime},+\infty\right).
\end{align}
Let $X^{\prime}\geq \max\{X_1^{\prime},X_2^{\prime},X_3^{\prime}\sqrt{(C_1 +\max_{1\le i\le n} \{|\nu_i \cot \theta_i|\})^2+1}\}$ be large enough and $\beta^{*}:=\min\{\beta_1^*, \beta_2^*\}$. By \eqref{3.26}-\eqref{3.30}, if $\alpha<\alpha^{+}_2(\beta, \varepsilon):=\varepsilon \beta\frac{c_{f}^{2}}{8 \Lambda_1} \left(\frac{L_1}{2}\right)^\beta $, one gets
\begin{equation*}
\begin{aligned}
	\mathcal{L} \bar{V} & =I_1+I_2-f\left(\bar{V}\right) \\
	& \geq-\alpha\Lambda_1 h(\alpha t, \alpha x, \alpha) e^{-\frac{c_{f}}{2} \xi}+\varepsilon  U^\beta (\eta) h(\alpha t, \alpha x, \alpha)\times \beta\frac{c_{f}^{2}}{8 } \\
	& \geq-\alpha\Lambda_1 h(\alpha t, \alpha x, \alpha) e^{-\frac{c_{f}}{2} \xi}+\varepsilon \beta\frac{c_{f}^{2}}{8 } \left(\frac{L_1}{2}\right)^\beta h(\alpha t, \alpha x, \alpha)  e^{-\frac{c_{f}}{4} \xi} >0, \quad \forall \eta\in\left(X^{\prime},+\infty\right).
\end{aligned}
\end{equation*}

\textbf{Case 2:} $\eta(t, x, y)<-X^{\prime\prime}$, where $X^{\prime\prime}>1$ is to be chosen.
There is $\bar{V}(t, x, y)=U(\xi)+\varepsilon h(\alpha t, \alpha x, \alpha)$. It follows from \eqref{3.17}, $U^{\prime}< 0$ and Lemma \ref{Lemma 3.3} that
\begin{equation}\label{3.31}
	\begin{aligned}
		\mathcal{L} \bar{V}= & I_1-\varepsilon \alpha^2   \sum_{k=1}^{N-1}\partial_{\zeta_{k}\zeta_{k}}h(\alpha t, \alpha x, \alpha)-f\left(\bar{V}\right) \\
		&\ge U^{\prime} (\xi)(\partial_t \xi +c_{f})- U^{\prime} (\xi)\left(\sum_{k=1}^{N-1}\partial_{x_{k}x_{k}} \xi+\partial_{yy} \xi\right)- U^{\prime\prime} (\xi)\left(\sum_{k=1}^{N-1}(\partial_{x_{k}} \xi)^2+(\partial_{y} \xi)^2-1\right)\\
		&\quad-\varepsilon \alpha^2 \sum_{k=1}^{N-1}\partial_{\zeta_{k}\zeta_{k}}h(\alpha t, \alpha x, \alpha)+f\left(U(\xi)\right)-f\left(\bar{V}\right).
	\end{aligned}
\end{equation} 
Define $\varepsilon_2^{+}:=\gamma_{\star} / 6$, where $\gamma_{\star}$ is defined in 
\eqref{1.4}. By \eqref{1.5} and \eqref{1.4}, for any $\varepsilon<\varepsilon_2^{+}$, there exists a sufficiently large positive constant  
$X^{\prime \prime}_{1}>1$ such that $U(\xi), \bar{V} \in [1-2\gamma_{\star},1+2\gamma_{\star}]$ for $\xi<-X^{\prime\prime}_{1}$, then
\begin{equation}\label{3.32}
	f\left(U(\xi)\right)-f\left(\bar{V}\right) \geq -\frac{f^{\prime}(1)}{2}  \varepsilon h(\alpha t, \alpha x, \alpha),\quad \forall\xi\in\left(-\infty,-X^{\prime \prime}_{1}\right).
\end{equation}
Let $X^{\prime \prime}\geq X^{\prime \prime}_{1} \sqrt{(C_1 +\max_{1\le i\le n} \{|\nu_i \cot \theta_i|\})^2+1}$ be large enough. It follows from \eqref{1.5}, \eqref{3.18}, \eqref{3.34}, \eqref{3.25}, 
\eqref{3.31}, \eqref{3.32} and Lemma \ref{Lemma 3.2} that there exists a constant $\Lambda_2>0$ such that
\begin{equation}\label{3.33}
\begin{aligned}
	\mathcal{L} \bar{V}&\geq -\Lambda_2 \alpha h(\alpha t, \alpha x, \alpha) -\varepsilon \alpha^2   \sum_{k=1}^{N-1}\partial_{\zeta_{k}\zeta_{k}}h(\alpha t, \alpha x, \alpha)-\frac{f^{\prime}(1)}{2} \varepsilon h(\alpha t, \alpha x, \alpha)\\
	&\geq h(\alpha t, \alpha x, \alpha)\left( -\Lambda_2 \alpha-\frac{f^{\prime}(1)}{2} \varepsilon -\varepsilon \alpha^2 C_3\right)  
\end{aligned}
\end{equation}
for all $\eta \in\left(-\infty,-X^{\prime \prime}\right)$. Obviously, there exists a constant $\alpha_3^{+}(\varepsilon)>0$ such that
\begin{equation}\label{3.34}
	-\Lambda_2 \alpha-\frac{f^{\prime}(1)}{2} \varepsilon -\varepsilon \alpha^2 C_3 > 0, \quad\forall \alpha \in\left(0, \alpha_3^{+}(\varepsilon)\right).
\end{equation}
Letting $\varepsilon_0^{+} < \varepsilon_2^{+}$ and $\alpha_0^{+} < \alpha_3^{+}(\varepsilon)$, one gets $\mathcal{L} \bar{V}>0$ in Case $2$ by \eqref{3.33} and \eqref{3.34}.

\textbf{Case 3:} $-X^{\prime \prime} \leq \eta(t, x, y) \leq X^{\prime}$.
\begin{align*}
	&\left(\partial_t-\Delta_{x, y}\right)\left(\varepsilon h(\alpha t, \alpha x, \alpha)\left[U^\beta(\eta) \omega(\eta)+(1-\omega(\eta))\right]\right)\\
	&=\varepsilon \alpha \partial_{\alpha t}h(\alpha t, \alpha x, \alpha)\left[U^\beta (\eta) \omega(\eta)+(1-\omega(\eta))\right]+\varepsilon \beta h(\alpha t, \alpha x, \alpha) U^{\beta-1} (\eta) U^{\prime}(\eta) \omega(\eta)\partial_{t}\eta\\
	&\quad+\varepsilon h(\alpha t, \alpha x, \alpha)U^{\beta}(\eta)\omega^{\prime}(\eta)\partial_{t}\eta +\varepsilon h(\alpha t, \alpha x, \alpha)(-\omega^{\prime}(\eta))\partial_{t}\eta\\
	&\quad-\varepsilon\alpha^{2}\left(\sum_{k=1}^{N-1}\partial_{\zeta_{k}\zeta_{k}}h(\alpha t, \alpha x, \alpha)\right)\left[U^\beta (\eta) \omega(\eta)+(1-\omega(\eta))\right]\\
	&\quad-2\varepsilon\alpha\beta U^{\beta-1}U^{\prime}(\eta) \omega(\eta)\left(\sum_{k=1}^{N-1}\partial_{\zeta_{k}}h(\alpha t, \alpha x, \alpha)\partial_{x_k}\eta\right)-2\varepsilon\alpha U^{\beta}(\eta)\omega^{\prime}(\eta)\left(\sum_{k=1}^{N-1}\partial_{\zeta_{k}}h(\alpha t, \alpha x, \alpha)\partial_{x_k}\eta\right)\\
	&\quad+2\varepsilon\alpha \omega^{\prime}(\eta)\left(\sum_{k=1}^{N-1}\partial_{\zeta_{k}}h(\alpha t, \alpha x, \alpha)\partial_{x_k}\eta\right)\\
	&\quad-\varepsilon \beta(\beta-1) h(\alpha t, \alpha x, \alpha)U^{\beta-2}(\eta)(U^{\prime}(\eta))^2\omega(\eta)\left(\sum_{k=1}^{N-1}(\partial_{x_k}\eta)^2+(\partial_{y}\eta)^2\right)\\
	&\quad-\varepsilon\beta h(\alpha t, \alpha x, \alpha) U^{\beta-1}(\eta)U^{\prime\prime}(\eta)\omega(\eta)\left(\sum_{k=1}^{N-1}(\partial_{x_k}\eta)^2+(\partial_{y}\eta)^2\right)\\
	&\quad-\varepsilon\beta h(\alpha t, \alpha x, \alpha) U^{\beta-1}(\eta)U^{\prime}(\eta)\omega(\eta)\left(\sum_{k=1}^{N-1}\partial_{x_k x_k}\eta+\partial_{yy}\eta\right)\\
	&\quad-2\varepsilon\beta h(\alpha t, \alpha x, \alpha) 
	U^{\beta-1}(\eta)U^{\prime}(\eta)\omega^{\prime}(\eta)\left(\sum_{k=1}^{N-1}(\partial_{x_k}\eta)^2
	+(\partial_{y}\eta)^2\right)\\
&\quad-\varepsilon h(\alpha t, \alpha x, \alpha) 
U^{\beta}(\eta)\omega^{\prime\prime}(\eta)\left(\sum_{k=1}^{N-1}
(\partial_{x_k}\eta)^2+(\partial_{y}\eta)^2\right)\\
&\quad-\varepsilon h(\alpha t, \alpha x, \alpha) 
U^{\beta}(\eta)\omega^{\prime}(\eta)\left(\sum_{k=1}^{N-1}
\partial_{x_k x_k}\eta+\partial_{yy}\eta\right) \qquad\qquad\qquad\\
	&\quad+\varepsilon h(\alpha t, \alpha x, \alpha) 
	\omega^{\prime\prime}(\eta)\left(\sum_{k=1}^{N-1}(\partial_{x_k}\eta)^2
+(\partial_{y}\eta)^2\right)+\varepsilon h(\alpha t, \alpha x, \alpha) 
\omega^{\prime}(\eta)\left(\sum_{k=1}^{N-1}\partial_{x_k x_k}\eta
+\partial_{yy}\eta\right).
\end{align*}
Since $\eta$ is bounded, $h(\alpha t, \alpha x, \alpha)\le 1$, \eqref{1.5}, \eqref{3.7}, \eqref{3.12}, 
\eqref{3.17}, \eqref{3.18}, \eqref{3.14}, Lemma \ref{Lemma 3.2} and Lemma \ref{Lemma 3.3}, one has
\[
\varepsilon \alpha \partial_{\alpha t}h(\alpha t, \alpha x, \alpha)\left[U^\beta (\eta) \omega(\eta)+(1-\omega(\eta))\right]= 2 \varepsilon \alpha c_{f} h(\alpha t, \alpha x, \alpha)\left[U^\beta(\eta) \omega(\eta)+(1-\omega(\eta))\right]\geq 0,
\]
\[\varepsilon\beta  h(\alpha t, \alpha x, \alpha)
U^{\beta-1}(\eta)U^{\prime}(\eta)\omega(\eta)\partial_{t}\eta=\varepsilon\beta h(\alpha t, \alpha x, \alpha) U^{\beta-1}(\eta)U^{\prime}(\eta)\omega(\eta)\left(-\partial_{\alpha t}\varphi\right)>0,
\]
\[
\begin{aligned}
	\varepsilon h(\alpha t, \alpha x, \alpha)U^{\beta}(\eta)\omega^{\prime}(\eta)\partial_{t}\eta+\varepsilon h(\alpha t, \alpha x, \alpha)(-\omega^{\prime}(\eta))\partial_{t}\eta=	\varepsilon h(\alpha t, \alpha x, \alpha)(U^{\beta}(\eta) -1)\omega^{\prime}(\eta)\left(-\partial_{\alpha t}\varphi\right)\geq 0,
\end{aligned}
\]
\[
-\varepsilon\alpha^{2}\left(\sum_{k=1}^{N-1}\partial_{\zeta_{k}\zeta_{k}}h(\alpha t, \alpha x, \alpha)\right)\left[U^\beta (\eta)
\omega(\eta)+(1-\omega(\eta))\right]\ge-\varepsilon\alpha^{2} C_3 h(\alpha t, \alpha x, \alpha),
\]
\[\begin{aligned}
&-2\varepsilon\alpha\beta 
U^{\beta-1}(\eta) U^{\prime}(\eta)\omega(\eta)\left(\sum_{k=1}^{N-1}\partial_{\zeta_{k}}h(\alpha t, \alpha x, \alpha)\partial_{x_k}\eta\right) \\
&\quad\ge-2\varepsilon\alpha\beta  C_3 h(\alpha t, \alpha x, \alpha) (C_1 h(\alpha t, \alpha x, \alpha)+\max_{1\le i\le n} \{|\nu_i \cot \theta_i|\}) \max\{L_2 e^{-c_{f} |\eta|}, L_3 e^{-\beta_{0} |\eta|}\},
\end{aligned}\]
\[
\begin{aligned}
&-2\varepsilon\alpha U^{\beta}(\eta) \omega^{\prime}(\eta)\left(\sum_{k=1}^{N-1}\partial_{\zeta_{k}}h(\alpha t, \alpha x, \alpha)\partial_{x_k}\eta\right)\\
&\quad\ge -2 \varepsilon\alpha C_3 h(\alpha t, \alpha x, \alpha)\omega^{\prime}(\eta) (C_1 h(\alpha t, \alpha x, \alpha)+\max_{1\le i\le n} \{|\nu_i \cot \theta_i|\}),
\end{aligned}
\]
\[
\begin{aligned}
&2\varepsilon\alpha \omega^{\prime}(\eta)\left(\sum_{k=1}^{N-1}\partial_{\zeta_{k}}h(\alpha t, \alpha x, \alpha)\partial_{x_k}\eta\right)\\
&\quad\ge -2\varepsilon\alpha C_3 h(\alpha t, \alpha x, \alpha)\omega^{\prime}(\eta) (C_1 h(\alpha t, \alpha x, \alpha)+\max_{1\le i\le n} \{|\nu_i \cot \theta_i|\}),
\end{aligned}
\]
\[
\begin{aligned}
&-\varepsilon\beta(\beta-1) h(\alpha t, \alpha x, \alpha)U^{\beta-2}(\eta)(U^{\prime}(\eta))^2\omega(\eta)
\left(\sum_{k=1}^{N-1}(\partial_{x_k}\eta)^2+(\partial_{y}\eta)^2\right)\\
&\quad\ge-\varepsilon|\beta(\beta-1)| h(\alpha t, \alpha x, \alpha) \left(L_2 e^{-c_{f} |\eta|}\right)^{2} \left[1+(C_1 h(\alpha t, \alpha x, \alpha)+\max_{1\le i\le n} \{|\nu_i \cot \theta_i|\})^2\right],
\end{aligned}
\]
\[
\begin{aligned}
&-\varepsilon\beta h(\alpha t, \alpha x, \alpha) U^{\beta-1}(\eta) U^{\prime \prime}(\eta) \omega(\eta)\left(\sum_{k=1}^{N-1}(\partial_{x_k}\eta)^2+(\partial_{y}\eta)^2\right)\\
&\quad\ge-\varepsilon \beta h(\alpha t, \alpha x, \alpha) \max\{L_2 e^{-c_{f} |\eta|}, L_3 e^{-\beta_{0} |\eta|}\}\left[1+(C_1 h(\alpha x)+\max_{1\le i\le n} \{|\nu_i \cot \theta_i|\})^2\right],
\end{aligned}
\]
\[
\begin{aligned}
&-\varepsilon\beta h(\alpha t, \alpha x, \alpha) U^{\beta-1}(\eta) U^{\prime}(\eta)\omega(\eta)\left(\sum_{k=1}^{N-1}\partial_{x_k x_k}\eta+\partial_{yy}\eta\right)\\
&\quad\ge-\varepsilon\alpha \beta   (N-1)C_1 h(\alpha t, \alpha x, \alpha)\max\{L_2 e^{-c_{f} |\eta|}, L_3 e^{-\beta_{0} |\eta|}\} ,
\end{aligned}
\]
\[
\begin{aligned}
&-2\varepsilon\beta h(\alpha t, \alpha x, \alpha) U^{\beta-1}(\eta) U^{\prime}(\eta) \omega^{\prime}(\eta)\left(\sum_{k=1}^{N-1}(\partial_{x_k}\eta)^2+(\partial_{y}\eta)^2\right)\\
&\quad\ge-2\varepsilon\beta  h(\alpha t, \alpha x, \alpha) \omega^{\prime}(\eta)\max\{L_2 e^{-c_{f} |\eta|}, L_3 e^{-\beta_{0} |\eta|}\} \left[1+(C_1 h(\alpha x)+\max_{1\le i\le n} \{|\nu_i \cot \theta_i|\})^2\right],
\end{aligned}
\]
\[
\begin{aligned}
	&-\varepsilon h(\alpha t, \alpha x, \alpha) U^{\beta}(\eta) \omega^{\prime\prime}(\eta)\left(\sum_{k=1}^{N-1}(\partial_{x_k}\eta)^2+(\partial_{y}\eta)^2\right)\\
	&\quad\ge-\varepsilon h(\alpha t, \alpha x, \alpha)|\omega^{\prime\prime}(\eta)|\left[1+(C_1 h(\alpha x)+\max_{1\le i\le n} \{|\nu_i \cot \theta_i|\})^2\right],
\end{aligned}
\]
\[
\begin{aligned}
	-\varepsilon h(\alpha t, \alpha x, \alpha) U^{\beta}(\eta) \omega^{\prime}(\eta)\left(\sum_{k=1}^{N-1}\partial_{x_k x_k}\eta+\partial_{yy}\eta\right)\ge-\varepsilon \alpha N C_{1} \omega^{\prime}(\eta) h(\alpha t, \alpha x, \alpha) ,
\end{aligned}
\]
\[
\begin{aligned}
	&\varepsilon h(\alpha t, \alpha x, \alpha) \omega^{\prime\prime}(\eta)\left(\sum_{k=1}^{N-1}(\partial_{x_k}\eta)^2+(\partial_{y}\eta)^2\right)\\
	&\quad\ge-\varepsilon h(\alpha t, \alpha x, \alpha)|\omega^{\prime\prime}(\eta)|\left[1+(C_1 h(\alpha x)+\max_{1\le i\le n} \{|\nu_i \cot \theta_i|\})^2\right]
\end{aligned}
\]
and
\[
\begin{aligned}
\varepsilon h(\alpha t, \alpha x, \alpha) \omega^{\prime}(\eta)\left(\sum_{k=1}^{N-1}\partial_{x_k x_k}\eta+\partial_{yy}\eta\right)\ge-\varepsilon \alpha N C_{2} \omega^{\prime}(\eta) h(\alpha t, \alpha x, \alpha).
\end{aligned}
\]
Hence, there exists a constant $\Lambda_3>0$ such that for any $\eta \in \left[-X^{\prime \prime}, X^{\prime}\right]$,
\begin{align}\label{3.35}
	\left(\partial_t-\Delta_{x, y}\right)\left(\varepsilon h(\alpha t, \alpha x, \alpha)\left[U^\beta(\eta) \omega(\eta)+(1-\omega(\eta))\right]\right)>-\Lambda_3 \varepsilon h(\alpha t, \alpha x, \alpha).
\end{align}
It follows from \eqref{3.17} and \eqref{3.35} that
\begin{equation}\label{3.36}
\begin{aligned}
	\mathcal{L} \bar{V}> & I_1-\Lambda_3 \varepsilon h(\alpha t, \alpha x, \alpha)-f\left(\bar{V}\right) \\
	= & U^{\prime} (\xi)(\partial_t \xi +c_{f})- U^{\prime} (\xi)\left(\sum_{k=1}^{N-1}\partial_{x_{k}x_{k}} \xi+\partial_{yy} \xi\right)- U^{\prime\prime} (\xi)\left(\sum_{k=1}^{N-1}(\partial_{x_{k}} \xi)^2+(\partial_{y} \xi)^2-1\right)\\
	&-\Lambda_3 \varepsilon h(\alpha t, \alpha x, \alpha)+f(U(\xi))-f\left(\bar{V}\right)
\end{aligned}
\end{equation}
for all $\xi \in\left[-X^{\prime \prime}, X^{\prime}\right]$. By the 
boundedness of $\xi$, \eqref{1.5}, \eqref{3.12}, \eqref{3.18}, Lemma \ref{Lemma 3.2} and Lemma \ref{Lemma 3.3}, we obtain
\[
U^{\prime} (\xi)(\partial_t \xi +c_{f})\geq U^{\prime} (\xi) \left(-\frac{1}{C_{2}} h(\alpha t, \alpha x, \alpha)+C_{2} \alpha h(\alpha t, \alpha x, \alpha)|\xi|\right)
\]
\[
\begin{aligned}
-U^{\prime} (\xi)\left(\sum_{k=1}^{N-1}\partial_{x_{k}x_{k}} \xi+\partial_{yy} \xi\right)\ge - C_{2} \alpha h(\alpha t, \alpha x, \alpha)\left|U^{\prime}(\xi)\right| (1+|\xi|+\alpha|\xi|),
\end{aligned}
\]
and
\[
\begin{aligned}
-U^{\prime\prime} (\xi)\left(\sum_{k=1}^{N-1}(\partial_{x_{k}} \xi)^2+(\partial_{y} \xi)^2-1\right)\ge-C_{2} \alpha h(\alpha t, \alpha x, \alpha)\left|U^{\prime\prime}(\xi)\right|\left(|\xi|+\xi^2\right).
\end{aligned}
\]
Thus there exists a constant $\Lambda_4>0$ such that 
\begin{equation}\label{3.37}
\begin{aligned}
\mathcal{L} \bar{V}> &-\Lambda_4 \alpha h(\alpha t, \alpha x, \alpha)-U^{\prime}(\xi)\frac{1}{C_{2}} h(\alpha t, \alpha x, \alpha)-\Lambda_3 \varepsilon h(\alpha t, \alpha x, \alpha)+f(U(\xi))-f\left(\bar{V}\right)
\end{aligned}
\end{equation}
for all $\xi\in\left[-X^{\prime \prime}, X^{\prime}\right]$. By \eqref{1.2} there is a constant $\Lambda_5>0$ such that
\begin{equation}\label{3.38}
f\left(U(\xi)\right)-f\left(\bar{V}\right)>-\Lambda_5 \varepsilon h(\alpha t, \alpha x, \alpha).
\end{equation}
It follows from \eqref{3.36}-\eqref{3.38} that
\begin{equation}\label{3.39}
\mathcal{L} \bar{V}>h(\alpha t, \alpha x, \alpha) \times\left[\frac{1}{C_{2}}\left|U^{\prime}(\xi)\right|-\Lambda_4 \alpha-\Lambda_5 \varepsilon-\Lambda_3 \varepsilon\right], \quad \forall\xi\in\left[-X^{\prime \prime}, X^{\prime}\right].
\end{equation}
Denote
\begin{equation}\label{3.40}
\alpha_4^{+}(\varepsilon):=\varepsilon \;\text { and }\; \varepsilon_3^{+}(\beta):=\frac{1}{C_{2}(\Lambda_3+\Lambda_4+\Lambda_5)}\min_{\xi\in\left[-X^{\prime \prime}, X^{\prime}\right]}\left|U^{\prime}(\xi)\right|.
\end{equation}
By \eqref{3.39} and \eqref{3.40}, set $\varepsilon_0^{+} \leq \varepsilon_3^{+}(\beta)$ and $\alpha_0^{+} \leq \alpha_4^{+}(\varepsilon)$, then $\mathcal{L} \bar{V}>0$ holds in Case $3$. 

{\it Step 2: proof of \eqref{3.14}.}  We consider two cases.

\textbf{Case 1:} For $(t, x, y)\in\mathbb{R}\times\mathbb{R}^{N}$ such that $d((t, x, y), \partial \mathcal{P})\geq \rho$ as $\rho\rightarrow+\infty$, fix an $\alpha \in(0, \alpha_0^{+})$ and suppose that there exists an $\varepsilon \in\left(0, \varepsilon_0 \right)$ and a sequence of points $\left(t_k, x_k, y_k\right)$ such that $d\left(\left(t_k, x_k, y_k\right), \partial \mathcal{P}\right) \rightarrow+\infty$ and
\begin{equation}\label{3.41}
	\left|\bar{V}\left(t_k, x_k, y_k\right)-\underline{V}\left(t_k, x_k, y_k\right)\right|> 2 \varepsilon .
\end{equation}
 
If the sequence $\left(t_k, x_k, y_k\right)$ satisfies
\begin{equation*}
\min _{1 \leq i \leq n}\left\{x_k \cdot \nu_i \cos \theta_i+y_k \sin \theta_i-c_f t_k+\tau_i\right\} \rightarrow+\infty,
\end{equation*}
then one has $\underline{V}\left(t_k, x_k, y_k\right) \rightarrow 0$. Since $\alpha>0$, $\min _{1 \leq i \leq n}\left\{\alpha x_k \cdot \nu_i \cos \theta_i+ \alpha y_k \sin \theta_i- \alpha c_f t_k+ \alpha \tau_i\right\} \rightarrow+\infty$. Therefore $\left(\alpha t_k, \alpha x_k, \alpha y_k\right) \in \mathcal{Q}$ and
\begin{equation*}
d\left(\left(\alpha t_k, \alpha x_k, \alpha y_k\right), \partial \mathcal{Q}\right) \rightarrow+\infty \text { as } k \rightarrow+\infty .
\end{equation*}
Since the surface $\{y=\varphi(t, x, \alpha)\}$ is bounded away from $\partial \mathcal{Q}$, one has $\alpha y_k-\varphi\left(\alpha t_k, \alpha x_k, \alpha\right) \rightarrow+\infty$ and thereby $\xi\left(t_k, x_k, y_k\right) \rightarrow+\infty$. Then we obtain that $U\left(\xi\left(t_k, x_k, y_k\right)\right) \rightarrow 0$ and
\begin{equation*}
\begin{aligned}
	 \left|\bar{V}\left(t_k, x_k, y_k\right)-\underline{V}\left(t_k, x_k, y_k\right)\right|\leq \varepsilon\left(h\left(\alpha t_k, \alpha x_k, \alpha\right)+\frac{1}{2}\right) \leq\left(1+\frac{1}{2}\right) \varepsilon \quad\text{for } k \text { large enough}.
\end{aligned}
\end{equation*}
This contradicts \eqref{3.41}.

If the sequence $\left(t_k, x_k, y_k\right)$ satisfies
\begin{equation*}
\min _{1 \leq i \leq n}\left\{x_k \cdot \nu_i \cos \theta_i+y_k \sin \theta_i-c_f t_k+\tau_i\right\} \rightarrow-\infty,
\end{equation*}
we can also get in the same way that $U\left(\xi\left(t_k, x_k, y_k\right)\right) \rightarrow 1$ and $\underline{V}\left(t_k, x_k, y_k\right) \rightarrow 1$ as $k \rightarrow+\infty$. Thus one has 
\begin{equation*}
	\begin{aligned}
		\left|\bar{V}\left(t_k, x_k, y_k\right)-\underline{V}\left(t_k, x_k, y_k\right)\right|\leq \varepsilon\left(h\left(\alpha t_k, \alpha x_k, \alpha\right)+\frac{1}{2}\right) \leq\left(1+\frac{1}{2}\right) \varepsilon \quad\text{for } k \text { large enough},
	\end{aligned}
\end{equation*}
which also led to the same contradiction.

\textbf{Case 2:} For $(t, x, y)\in\mathbb{R}\times\mathbb{R}^{N}$ such that $d((t, x, y), \partial \mathcal{P}) < +\infty$ and $d((t, x, y), \mathcal{R}) \geq \rho$ as $\rho\rightarrow+\infty$, there exists some $i \in\{1, \ldots, n\}$ such that
\begin{equation*}
\begin{aligned}
	\left|x \cdot \nu_i \cos \theta_i+y \sin \theta_i-c_f t+\tau_i\right| \text { is bounded and } x \cdot \nu_j \cos \theta_j +y \sin \theta_j-c_f t+\tau_j \rightarrow+\infty \text { for } j \neq i.
\end{aligned}
\end{equation*}
Thus $\underline{V}(t, x, y)=U\left(x \cdot \nu_i \cos \theta_i+y \sin \theta_i-c_f t+\tau_i\right)$. Since $\alpha>0$, it holds that
\begin{equation*}
	q_i(\alpha t, \alpha x, \alpha y, \alpha) \text { is bounded and } q_j(\alpha t, \alpha x, \alpha y, \alpha) \rightarrow+\infty \text { for } j \neq i .
\end{equation*}
This implies $d((\alpha t, \alpha x, \alpha y), \partial \mathcal{Q}) < +\infty$ and $d((\alpha t, \alpha x, \alpha y), G) \rightarrow+\infty$, then we obtain that $(\alpha t, \alpha x) \in \widehat{Q}_i$ and $d((\alpha t, \alpha x), \widehat{G}) \rightarrow+\infty$. It follows from Lemmas \ref{Lemma 3.2} and \ref{Lemma 3.3} that
\begin{equation*}
\left|\varphi(\alpha t, \alpha x, \alpha)-\left(-\alpha x \cdot \nu_i \cot \theta_i+\frac{c_f}{\sin \theta_i} \alpha t+\frac{\alpha \tau_i}{\sin \theta_i}\right)\right| \rightarrow 0
\end{equation*}
and
\begin{equation*}
\left|\nabla \varphi(\alpha t, \alpha x, \alpha)+\nu_i \cot \theta_i\right| \rightarrow 0 .
\end{equation*}
Thus
\begin{equation*}
\xi(t, x, y) \rightarrow x \cdot \nu_i \cos \theta_i+y \sin \theta_i-c_f t+\tau_i
\end{equation*}
and 
\begin{equation*}
\left|\bar{V}\left(t, x, y\right)-\underline{V}\left(t, x, y\right)\right|\leq \left|U(\xi)-U\left(x \cdot \nu_i \cos \theta_i+y \sin \theta_i-c_f t+\tau_i\right)\right|+\varepsilon h\left(\alpha t, \alpha x, \alpha\right) \leq \frac{3}{2}\varepsilon.
\end{equation*}

{\it Step 3: proof of \eqref{3.15}.} 
Fixing an $i \in\{1, \ldots, n\}$, let
\begin{equation*}
\xi_i(t, x, y)=x \cdot \nu_i \cos \theta_i+y \sin \theta_i-c_f t+\tau_i .
\end{equation*}
If $\eta \leq \xi_i$, since $U$ is a decreasing function, then we know
\begin{equation*}
\bar{V}(t, x, y)=U(\xi)+\varepsilon h(\alpha t, \alpha x, \alpha) \geq U\left(\xi_i\right)=U\left(x \cdot \nu_i \cos \theta_i+y \sin \theta_i-c_f t+\tau_i\right) .
\end{equation*}

Because $\underline{V}=U\left(\xi_i\right)$ for $(t, x)\in\widehat{P}_i$, if $\xi(t, x, y)>\xi_i(t, x, y)$, then
\begin{align*}
	0 & <\xi(t, x, y)-\xi_i(t, x, y) \\
	& =\frac{y-\frac{1}{\alpha} \varphi(\alpha t, \alpha x, \alpha)}{\sqrt{1+|\nabla \varphi (\alpha t, \alpha x, \alpha)|^2}}-\left(x \cdot \nu_i \cos \theta_i+y \sin \theta_i-c_f t+\tau_i\right) \\
	& =\left(\frac{1}{\sin \theta_i \sqrt{1+|\nabla \varphi(\alpha t, \alpha x, \alpha)|^2}}-1\right) \xi_i(t, x, y) \\
	&\quad-\frac{\alpha x \cdot \nu_i \cot \theta_i-\alpha c_f t / \sin \theta_i+\alpha \tau_i / \sin \theta_i+\varphi(\alpha t, \alpha x, \alpha)}{\alpha \sqrt{1+|\nabla \varphi(\alpha t, \alpha x, \alpha)|^2}}
\end{align*}
According to Lemma \ref{Lemma 3.1}, we know that $\Sigma \subset \mathcal{Q}$, and thereby $\varphi(\alpha t, \alpha x, \alpha) \geq \psi_i(\alpha t, \alpha x, \alpha)$ for any fixed $i \in\{1, \ldots, n\}$ and $(\alpha t, \alpha x) \in \widehat{Q}_i$. Then it holds that
\begin{align*}
	\xi_i(t, x, y)&<\xi(t, x, y) \\
	&=\frac{y-\frac{1}{\alpha} \varphi(\alpha t, \alpha x, \alpha)}{\sqrt{1+|\nabla \varphi(\alpha t, \alpha x, \alpha)|^2}} \leq \frac{x \cdot \nu_i \cos \theta_i+y \sin \theta_i-c_f t+\tau_i}{\sin \theta_i \sqrt{1+|\nabla \varphi(\alpha t, \alpha x, \alpha)|^2}} =\frac{\xi_i(t, x, y)}{\sin \theta_i \sqrt{1+|\nabla \varphi(\alpha t, \alpha x, \alpha)|^2}}.
\end{align*}
It follows from \cite[(3.7) and (3.8)]{H. Guo2024}, Lemma \ref{Lemma 3.2} and Lemma \ref{Lemma 3.3} that there exist constants $M>0$ and $A_{1}>0$ such that
\begin{equation*}
	0\leq \left|\sqrt{1+|\nabla \varphi (\alpha t, \alpha x, \alpha)|^2}-\frac{1}{\sin \theta_i}\right| \leq M h(\alpha t, \alpha x, \alpha)
\end{equation*}
and
\begin{equation*}
\left|\xi_i(t, x, y)\right| \geq \frac{\varphi(\alpha t, \alpha x, \alpha)-\psi_{i}(\alpha t, \alpha x, \alpha)}{\alpha \sqrt{1+|\nabla \varphi (\alpha t, \alpha x, \alpha)|^2}}\left|\frac{1}{\sin \theta_i \sqrt{1+|\nabla \varphi (\alpha t, \alpha x, \alpha)|^2}}-1\right|^{-1} \geq \frac{A_1}{\alpha}>0.
\end{equation*}
Therefore
\begin{equation*}
\begin{aligned}
	&\bar{V}(t, x, y)-U\left(\xi_i\right) \\
	& =U(\bar{\xi})-U\left(\xi_i\right)+\varepsilon h(\alpha t, \alpha x, \alpha)\left[U^\beta(\eta) \omega(\eta)+(1-\omega(\eta))\right] \\
	& =U^{\prime}\left(\xi^{\prime}\right)\left(\xi-\xi_i\right)+\varepsilon h(\alpha t, \alpha x, \alpha)\left[U^\beta(\eta) \omega(\eta)+(1-\omega(\eta))\right] \\
	& \geq U^{\prime}\left(\xi^{\prime}\right) \xi_i\left(\frac{1}{\sin \theta_i \sqrt{1+|\nabla \varphi(\alpha t, \alpha x, \alpha)|^2}}-1\right)+\varepsilon h(\alpha t, \alpha x, \alpha)\left[U^\beta(\eta) \omega(\eta)+(1-\omega(\eta))\right]  \\
	& \geq-\left|U^{\prime}\left(\xi^{\prime}\right)\xi_{i}^{2}\right| \frac{\alpha}{A_1} M   h(\alpha t, \alpha x, \alpha)+\varepsilon h(\alpha t, \alpha x, \alpha),
\end{aligned}
\end{equation*}
where $\xi^{\prime}$ is between $\xi_i$ and $\xi$. Since $\left|U^{\prime}\left(\xi^{\prime}\right) \xi_i^2\right|$ is bounded, there is a constant $\alpha_{5}^{+}(\varepsilon)>0$ small enough such that for any $0<\alpha<\alpha_{5}^{+}(\varepsilon)$, $\bar{V}(t, x, y) - U(\xi_i) \geq 0$. Since  $\cup_{i=1}^n \widehat{P}_i=\mathbb{R}\times\mathbb{R}^{N-1}$, we can get that 
\begin{equation*}
\bar{V}(t, x, y) \geq \max _{i \in\{1, \ldots, n\}}\left\{U\left(x \cdot \nu_i \cos \theta_i+y \sin \theta_i-c_f t+\tau_i\right)\right\}=\underline{V}(t, x, y), \quad \forall(t, x, y) \in \mathbb{R} \times \mathbb{R}^N.
\end{equation*}
Let $0<\varepsilon_0^{+} \leq \min \left\{\varepsilon_1^{+}, \varepsilon_2^{+}, \varepsilon_3^{+}(\beta)\right\}$, $\beta^*=\min \left\{1, \beta_1^*, \beta_2^*\right\}$ and
\begin{equation*}
	0<\alpha_0^{+} \leq \min \left\{\alpha_1^{+}(\beta), \alpha_2^{+}(\beta, \varepsilon), \alpha_3^{+}(\varepsilon), \alpha_4^{+}(\varepsilon), \alpha_5^{+}(\varepsilon), \frac{\kappa c_{f}}{2 M C_{2}}\right\},
\end{equation*}
where $\kappa=\kappa(X^{\prime}, X^{\prime \prime}):=\min_{\xi\in[-X^{\prime\prime}, X^{\prime}]}\left|U^{\prime}(\xi)\right| > 0$. This completes the proof of the lemma. 
\end{proof}

We point out that there exist positive constants $v_{\star}$ and $C_{\star}$ such that
\begin{equation}\label{3.45}
\frac{\left|\underline{V}(t, x, y)\right|+\left|\bar{V}(t, x, y)\right|}{\min \left\{1, e^{-2 v_{\star} \min_{1\leq i\leq n} \left\{x \cdot \nu_i \cot \theta_i+y-\frac{c_f}{\sin \theta_i} t+\frac{\tau_i}{\sin \theta_i}   \right\}}\right\}} \leq C_{\star},\quad \forall(t, x, y)\in\mathbb{R} \times \mathbb{R}^N.
\end{equation}
Actually, we only need to consider $\min_{1\leq i\leq n} \left\{x \cdot \nu_i \cot \theta_i+y-\frac{c_f}{\sin \theta_i} t+\frac{\tau_i}{\sin \theta_i}   \right\}>0$. 

\textbf{Case 1:} $\min_{1\leq i\leq n} \left\{x \cdot \nu_i \cot \theta_i+y-\frac{c_f}{\sin \theta_i} 
t+\frac{\tau_i}{\sin \theta_i} \right\}>0$ and $\xi(t, x, y)>1$.
By \eqref{1.5}, one has
\begin{equation*}
	\left|\underline{V}(t, x, y)\right|\leq L_{2} e^{-c_f \min_{1 \leq i \leq n}\{x \cdot \nu_i \cos \theta_i+y \sin \theta_i-c_f t+ \tau_i\} }\leq L_{2} e^{-2v\min_{1\leq i\leq n} \left\{x \cdot \nu_i \cot \theta_i+y-\frac{c_f}{\sin \theta_i} t+\frac{\tau_i}{\sin \theta_i}   \right\}} 
\end{equation*}
for all $0< v \leq c_f \min_{1 \leq i \leq n} \{\sin\theta_i\} / 2$. Denote $\widetilde{C}:=\widetilde{C}(\alpha)=\frac{\sup _{(\alpha t, \alpha x) \in\mathbb{R}\times\mathbb{R}^{N-1}}|\varphi(\alpha t, \alpha x, \alpha)-\psi(\alpha t, \alpha x, \alpha)|}{\alpha} < +\infty$. It follows from \eqref{3.3} and the definition of $h(t, x, \alpha)$ that $\alpha \widetilde{C} \leq C \sup _{(\alpha t, \alpha x) \in\mathbb{R}\times\mathbb{R}^{N-1}} h(\alpha t, \alpha x, \alpha) \leq C$. Then we can obtain that
	\begin{align*}
		& \left|U(\xi)\right|+\left|U^\beta(\eta)\right|\\
		&\leq L_{2} e^{-c_f \xi}+\left(L_{2} e^{-c_f \eta}\right)^\beta\\
		&\leq L_{2}e^{-\alpha c_f \frac{y-\frac{1}{\alpha} \varphi(\alpha t, \alpha x, \alpha)}{\sqrt{1+|\nabla \varphi(\alpha t, \alpha x, \alpha)|^2}}}+\left(L_{2} e^{-\alpha c_f (y-\frac{1}{\alpha} \varphi(\alpha t, \alpha x, \alpha))}\right)^\beta\\
		&\leq L_{2} e^{-\alpha c_f (y-\frac{1}{\alpha} \varphi(\alpha t, \alpha x, \alpha))\frac{1}{\sqrt{1+|\nabla \varphi(\alpha t, \alpha x, \alpha)|^2}}}+L_{2}^\beta e^{-\alpha \beta c_f (y-\frac{1}{\alpha} \varphi(\alpha t, \alpha x, \alpha))}\\
		&\leq L_{2} e^{-\alpha c_f(y-(\psi(\alpha t, \alpha x, \alpha)/ \alpha +\sup_{\mathbb{R}^{N-1}}|\varphi(\alpha t, \alpha x, \alpha)-\psi(\alpha t, \alpha x, \alpha)| / \alpha))\frac{1}{\sqrt{1+|\nabla \varphi(\alpha t, \alpha x, \alpha)|^2}}}+L_{2}^{\beta}e^{-\alpha \beta c_f(y-\frac{1}{\alpha} \varphi(\alpha t, \alpha x, \alpha))}\\
		&\leq L_{2} e^{-\alpha c_f (y-(\max_{1\leq i \leq n}\{-x \cdot \nu_i \cot \theta_i+\frac{c_f}{\sin \theta_i} t-\frac{\tau_i}{\sin \theta_i} \}+\widetilde{C}))\frac{1}{\sqrt{1+|\nabla \varphi(\alpha t, \alpha x, \alpha)|^2}}}+L_{2}^{\beta}e^{-\alpha \beta c_f(y-\varphi(\alpha t, \alpha x, \alpha) / \alpha)}\\
		&\leq L_{2} e^{-\alpha c_f (y+(\min_{1\leq i \leq n}\{x \cdot \nu_i \cot \theta_i-\frac{c_f}{\sin \theta_i} t+\frac{\tau_i}{\sin \theta_i} \}-\widetilde{C}))\frac{1}{\sqrt{1+|\nabla \varphi(\alpha t, \alpha x, \alpha)|^2}}}+L_{2}^{\beta}e^{-\alpha \beta c_f(y-\varphi(\alpha t, \alpha x, \alpha) / \alpha)}\\
		&\leq L_{2} e^{-\alpha c_f (\min_{1\leq i \leq n}\{x \cdot \nu_i \cot \theta_i +y-\frac{c_f}{\sin \theta_i} t+\frac{\tau_i}{\sin \theta_i} \}-\widetilde{C})\frac{1}{\sqrt{1+|\nabla \varphi(\alpha t, \alpha x, \alpha)|^2}}}\\
		&\quad+L_{2}^{\beta} e^{-\alpha \beta c_f(\min_{1\leq i \leq n}\{x \cdot \nu_i \cot \theta_i +y-\frac{c_f}{\sin \theta_i} t+\frac{\tau_i}{\sin \theta_i} \}-\widetilde{C})}\\
		&\leq L_{2} e^{c_f C}\cdot e^{-\alpha c_f \min_{1\leq i \leq n}\{x \cdot \nu_i \cot \theta_i +y-\frac{c_f}{\sin \theta_i} t+\frac{\tau_i}{\sin \theta_i} \}\frac{1}{\sqrt{1+|\nabla \varphi(\alpha t, \alpha x, \alpha)|^2}}}\\
		&\quad+L_{2} e^{\beta c_f C}\cdot e^{-\alpha\beta c_f \min_{1\leq i \leq n}\{x \cdot \nu_i \cot \theta_i +y-\frac{c_f}{\sin \theta_i} t+\frac{\tau_i}{\sin \theta_i} \}}\\
		&\leq 2K_{1}^{*} e^{-2v \min_{1\leq i \leq n}\{x \cdot \nu_i \cot \theta_i +y-\frac{c_f}{\sin \theta_i} t+\frac{\tau_i}{\sin \theta_i}\}},
	\end{align*}
where $K_{1}^{*} =\max\{L_{2}, L_{2} e^{c_f C}, L_{2} e^{\beta c_f C}\}$ and $0< v \leq\frac{\alpha c_f}{2}\min\{\beta, \frac{1}{\sqrt{1+(C_1+\max_{1\le i\le n} \{|\nu_i \cot \theta_i|\})^2}}\}$.

\textbf{Case 2:} $\min_{1\leq i\leq n} \left\{x \cdot \nu_i \cot \theta_i+y-\frac{c_f}{\sin \theta_i} 
t+\frac{\tau_i}{\sin \theta_i} \right\}>0$ and $0 \leq \xi(t, x, y)\leq1$. By \eqref{1.5}, one gets  
\begin{equation*}
	\left|\underline{V}(t, x, y)\right|\leq L_{2} e^{-c_f \min_{1 \leq i \leq n}\{x \cdot \nu_i \cos \theta_i+y \sin \theta_i-c_f t+ \tau_i\} }\leq L_{2} e^{-2v\min_{1\leq i\leq n} \left\{x \cdot \nu_i \cot \theta_i+y-\frac{c_f}{\sin \theta_i} t+\frac{\tau_i}{\sin \theta_i}   \right\}} 
\end{equation*}
for all $0< v \leq c_f \min_{1 \leq i \leq n} \{\sin\theta_i\} / 2$. Moreover, we have
\begin{equation*}
	1\geq\xi(t, x, y)=\frac{y-\varphi(\alpha t, \alpha x, \alpha) / \alpha}{\sqrt{1+|\nabla \varphi(\alpha t, \alpha x, \alpha)|^2}}\geq \frac{\min_{1\leq i \leq n}\{x \cdot \nu_i \cot \theta_i +y-\frac{c_f}{\sin \theta_i} t+\frac{\tau_i}{\sin \theta_i} \}-\widetilde{C}}{\sqrt{1+|\nabla \varphi(\alpha t, \alpha x, \alpha)|^2}}
\end{equation*}
and thereby
\begin{equation*}
	0<\min_{1\leq i \leq n}\{x \cdot \nu_i \cot \theta_i +y-\frac{c_f}{\sin \theta_i} t+\frac{\tau_i}{\sin \theta_i} \}\leq \sqrt{1+(C_1+\max_{1\le i\le n} \{|\nu_i \cot \theta_i|\})^2}+\widetilde{C}.
\end{equation*}
Denote $\widehat{C}:=\sqrt{1+(C_3+\max_{1\le i\le n} \{|\nu_i \cot \theta_i|\})^2}+C$. 
Since $0 \leq \xi(t, x, y)\leq 1$ and \eqref{1.5}, we obtain that
	\begin{align*}
		\bar{V}(t,x,y)&\leq \left|U(\xi)\right|+1\\
		&\leq L_{2} e^{c_f C}\cdot e^{-\alpha c_f \min_{1\leq i \leq n}\{x \cdot \nu_i \cot \theta_i +y-\frac{c_f}{\sin \theta_i} t+\frac{\tau_i}{\sin \theta_i} \}\frac{1}{\sqrt{1+|\nabla \varphi(\alpha t, \alpha x, \alpha)|^2}}}\\
		&\quad+e^{\alpha\beta c_f \min_{1\leq i \leq n}\{x \cdot \nu_i \cot \theta_i +y-\frac{c_f}{\sin \theta_i} t+\frac{\tau_i}{\sin \theta_i} \}} \cdot e^{-\alpha\beta c_f \min_{1\leq i \leq n}\{x \cdot \nu_i \cot \theta_i +y-\frac{c_f}{\sin \theta_i} t+\frac{\tau_i}{\sin \theta_i} \}}\\
		&\leq 2K_{2}^{*} e^{-2v \min_{1\leq i \leq n}\{x \cdot \nu_i \cot \theta_i +y-\frac{c_f}{\sin \theta_i} t+\frac{\tau_i}{\sin \theta_i}\}},
	\end{align*}
where $K_{2}^{*}:=\max\{ L_{2},  L_{2} e^{c_f C}, e^{c_f \widehat{C}}\}$ and $0< v \leq\frac{\alpha c_f}{2}\min\{\beta, \frac{1}{\sqrt{1+(C_1+\max_{1\le i\le n} \{|\nu_i \cot \theta_i|\})^2}}\}$.

\textbf{Case 3:} $\min_{1\leq i\leq n} \left\{x \cdot \nu_i \cot \theta_i+y-\frac{c_f}{\sin \theta_i} 
t+\frac{\tau_i}{\sin \theta_i} \right\}>0$ and $\xi(t, x, y)< 0$. By \eqref{1.5}, we know
\begin{equation*}
	\left|\underline{V}(t, x, y)\right|\leq L_{2} e^{-c_f \min_{1 \leq i \leq n}\{x \cdot \nu_i \cos \theta_i+y \sin \theta_i-c_f t+ \tau_i\} }\leq L_{2} e^{-2v\min_{1\leq i\leq n} \left\{x \cdot \nu_i \cot \theta_i+y-\frac{c_f}{\sin \theta_i} t+\frac{\tau_i}{\sin \theta_i}   \right\}} 
\end{equation*}
for all $0< v \leq c_f \min_{1 \leq i \leq n} \{\sin\theta_i\} / 2$. Furthermore, one gets
\begin{equation*}
	0>\xi(t, x, y)=\frac{y-\varphi(\alpha t, \alpha x, \alpha) / \alpha}{\sqrt{1+|\nabla \varphi(\alpha t, \alpha x, \alpha)|^2}}\geq \frac{\min_{1\leq i \leq n}\{x \cdot \nu_i \cot \theta_i +y-\frac{c_f}{\sin \theta_i} t+\frac{\tau_i}{\sin \theta_i} \}-\widetilde{C}}{\sqrt{1+|\nabla \varphi(\alpha t, \alpha x, \alpha)|^2}},
\end{equation*}
\begin{equation*}
	0<\min_{1\leq i \leq n}\{x \cdot \nu_i \cot \theta_i +y-\frac{c_f}{\sin \theta_i} t+\frac{\tau_i}{\sin \theta_i} \}<\widetilde{C},
\end{equation*}
and 
\begin{equation*}
\frac{y-\varphi(\alpha t, \alpha x, \alpha) / \alpha}{\sqrt{1+|\nabla \varphi(\alpha t, \alpha x, \alpha)|^2}}\leq \frac{y-\psi(\alpha t, \alpha x, \alpha)/ \alpha}{\sqrt{1+|\nabla \varphi(\alpha t, \alpha x, \alpha)|^2}}= \frac{\min_{1\leq i \leq n}\{x \cdot \nu_i \cot \theta_i +y-\frac{c_f}{\sin \theta_i} t+\frac{\tau_i}{\sin \theta_i} \}}{\sqrt{1+|\nabla \varphi(\alpha t, \alpha x, \alpha)|^2}}.
\end{equation*}
Since $\xi(t, x, y)< 0$ and \eqref{1.5}, we have that
	\begin{align*}
		\bar{V}(t, x, y)&\leq \left|U(\xi)\right|+1\\
		&\leq 1+L_3 e^{\alpha \beta_{0} \frac{y-\varphi(\alpha t, \alpha x, \alpha) / \alpha}{\sqrt{1+|\nabla \varphi(\alpha t, \alpha x, \alpha)|^2}}} +1\\
		&\leq L_3 e^{\alpha \beta_{0} \frac{y-\varphi(\alpha t, \alpha x, \alpha) / \alpha}{\sqrt{1+(C_1+\max_{1\le i\le n} \{|\nu_i \cot \theta_i|\})^2}}}+2\\
		&\leq L_3 e^{\alpha \beta_{0} \frac{\min_{1\leq i \leq n}\{x \cdot \nu_i \cot \theta_i +y-\frac{c_f}{\sin \theta_i} t+\frac{\tau_i}{\sin \theta_i} \}}{\sqrt{1+(C_1+\max_{1\le i\le n} \{|\nu_i \cot \theta_i|\})^2}}}+2\\
		&\leq L_3 e^{\alpha \beta_{0} \min_{1\leq i \leq n}\{x \cdot \nu_i \cot \theta_i +y-\frac{c_f}{\sin \theta_i} t+\frac{\tau_i}{\sin \theta_i} \} }+2\\
		&\leq L_3 e^{ 2 \alpha \beta_{0} \min_{1\leq i \leq n}\{x \cdot \nu_i \cot \theta_i +y-\frac{c_f}{\sin \theta_i} t+\frac{\tau_i}{\sin \theta_i} \} } \cdot e^{-\alpha \beta_{0} \min_{1\leq i \leq n}\{x \cdot \nu_i \cot \theta_i +y-\frac{c_f}{\sin \theta_i} t+\frac{\tau_i}{\sin \theta_i} \} }\\
		&\quad+2 e^{\alpha \beta_{0} \min_{1\leq i \leq n}\{x \cdot \nu_i \cot \theta_i +y-\frac{c_f}{\sin \theta_i} t+\frac{\tau_i}{\sin \theta_i} \} } \cdot e^{-\alpha \beta_{0} \min_{1\leq i \leq n}\{x \cdot \nu_i \cot \theta_i +y-\frac{c_f}{\sin \theta_i} t+\frac{\tau_i}{\sin \theta_i} \} }\\
		&\leq L_3 e^{2 \beta_{0} C } \cdot e^{-\alpha \beta_{0} \min_{1\leq i \leq n}\{x \cdot \nu_i \cot \theta_i +y-\frac{c_f}{\sin \theta_i} t+\frac{\tau_i}{\sin \theta_i} \} }\\
		&\quad+2 e^{ \beta_{0} C } \cdot e^{-\alpha \beta_{0} \min_{1\leq i \leq n}\{x \cdot \nu_i \cot \theta_i +y-\frac{c_f}{\sin \theta_i} t+\frac{\tau_i}{\sin \theta_i} \} }\\
		&\leq 3 K_{3}^{*} e^{-2 v \min_{1\leq i \leq n}\{x \cdot \nu_i \cot \theta_i +y-\frac{c_f}{\sin \theta_i} t+\frac{\tau_i}{\sin \theta_i} \} },
	\end{align*}
where $K_{3}^{*}:=\max\{L_3, L_3 e^{2 \beta_{0} C }, e^{ \alpha \beta_{0} C}\}$ and $0<v\leq\frac{ \alpha \beta_{0}}{2}$. In conclusion, let $C_{\star}>\max\{3K_{1}^{*},  3K_{2}^{*}, 4K_{3}^{*}\}$ and $0<v_{\star}<\min\{\frac{\alpha \beta_{0} }{2}, \frac{ c_f}{2}\min_{1\leq i \leq n}\{\sin\theta_i,\alpha\beta, \alpha, \frac{\alpha}{\sqrt{1+(C_3+\max_{1\le i\le n} \{|\nu_i \cot \theta_i|\})^2}}\}\}$, then \eqref{3.41} is valid.

Furthermore, we can also obtain that there exist positive constants $v^{\star}$ and $C^{\star}$ such that
\begin{equation}\label{3.46}
	\frac{\left|\bar{V}(t, x, y)-\underline{V}(t, x, y)\right|}{\min \left\{1, e^{-2 v^{\star} \min_{1\leq i\leq n} \left\{x \cdot \nu_i \cot \theta_i+y-\frac{c_f}{\sin \theta_i} t+\frac{\tau_i}{\sin \theta_i} \right\}}\right\}} \leq C^{\star} \varepsilon  \quad\text{uniformly as } d\left((t, x, y), \mathcal{R}\right) \rightarrow+\infty.
\end{equation}
Since \eqref{3.14} gives the result for the case $\min_{1\leq i\leq n} \left\{x \cdot \nu_i \cot 
\theta_i+y-\frac{c_f}{\sin \theta_i} t+\frac{\tau_i}{\sin \theta_i} \right\}\leq 0$, we just have to 
consider $\min_{1\leq i\leq n} \left\{x \cdot \nu_i \cot \theta_i+y-\frac{c_f}{\sin \theta_i} 
t+\frac{\tau_i}{\sin \theta_i} \right\}>0$. 

\textbf{Subcase 1:} For $(t, x, y) \in \mathbb{R} \times \mathbb{R}^N$ such that $d\left(( t, x, y), 
\partial \mathcal{P}\right) \geq \rho$ as $\rho \rightarrow+\infty$, it holds that 
\begin{equation*}
\min _{1 \leq i \leq n}\left\{x \cdot \nu_i \cos \theta_i+y \sin \theta_i-c_f t+\tau_i\right\} \rightarrow+\infty \text{ and } \min_{1\leq i\leq n} \left\{x \cdot \nu_i \cot \theta_i+y-\frac{c_f}{\sin \theta_i} t+\frac{\tau_i}{\sin \theta_i} \right\} \rightarrow+\infty.
\end{equation*}
One can obtain that $\min_{1 \leq i \leq n}\left\{\alpha x \cdot \nu_i \cos \theta_i+\alpha y \sin \theta_i- \alpha c_f t+\alpha \tau_i \right\} \rightarrow+\infty$ for $(\alpha t, \alpha x, \alpha y) \in \mathbb{R} \times \mathbb{R}^N$. Similar to Step $2$ in Lemma \ref{Lemma 3.4}, one gets that $\alpha y-\varphi(\alpha t, \alpha x, \alpha y)  \rightarrow+\infty$, $\xi(t, x, y)\rightarrow+\infty$ and $\eta(t, x, y)\rightarrow+\infty$. Let $\mu(t, x, y):=\min_{1 \leq i \leq n}\left\{x \cdot \nu_i \cos \theta_i+y \sin \theta_i- c_f t+\tau_i \right\}$ and then $\underline{V}(t, x, y)=U(\mu)$. We have
\begin{equation*}
	0<\min_{1\leq i\leq n} \left\{x \cdot \nu_i \cot \theta_i+y-\frac{c_f}{\sin \theta_i} t+\frac{\tau_i}{\sin \theta_i} \right\}\cdot \min_{1\leq i\leq n}\left\{\sin \theta_{i} \right\} \leq \mu(t, x, y)
\end{equation*}
and
\begin{equation*}
\frac{y-\psi(\alpha t, \alpha x, \alpha y)/\alpha-\widetilde{C}}{\sqrt{1+(C_1+\max_{1\le i\le n} \{|\nu_i \cot \theta_i|\})^2}}\leq \frac{y-\varphi(\alpha t, \alpha x, \alpha y) / \alpha}{\sqrt{1+(C_1+\max_{1\le i\le n} \{|\nu_i \cot \theta_i|\})^2}} \leq \xi(t, x, y) \leq \eta(t, x, y) .
\end{equation*}
Then it holds that
	\begin{align*}
	&\frac{\left|\bar{V}(t, x, y)-\underline{V}(t, x, y)\right|}{ e^{-2 v^{\star} \min_{1\leq i\leq n} \left\{x \cdot \nu_i \cot \theta_i+y-\frac{c_f}{\sin \theta_i} t+\frac{\tau_i}{\sin \theta_i} \right\}}} \\
	&=\frac{\left|U(\xi)+\varepsilon h(\alpha t, \alpha x, \alpha y) \times U^\beta(\eta)-U(\mu)\right|}{ e^{-2 v^{\star} \min_{1\leq i\leq n} \left\{x \cdot \nu_i \cot \theta_i+y-\frac{c_f}{\sin \theta_i} t+\frac{\tau_i}{\sin \theta_i} \right\}}}\\
	&\leq \frac{\left|U(\xi)\right|+\left|U(\mu)\right|+\left|\varepsilon h(\alpha t, \alpha x, \alpha y) \times U^\beta(\eta)\right|}{ e^{-2 v^{\star} \min_{1\leq i\leq n} \left\{x \cdot \nu_i \cot \theta_i+y-\frac{c_f}{\sin \theta_i} t+\frac{\tau_i}{\sin \theta_i} \right\}}}\\
	&\leq \frac{L_{2} e^{- \alpha c_{f} \xi}}{ e^{-2 v^{\star} \min_{1\leq i\leq n} \left\{x \cdot \nu_i \cot \theta_i+y-\frac{c_f}{\sin \theta_i} t+\frac{\tau_i}{\sin \theta_i} \right\}}}+\frac{L_{2} e^{- \alpha c_{f} \mu}}{ e^{-2 v^{\star} \min_{1\leq i\leq n} \left\{x \cdot \nu_i \cot \theta_i+y-\frac{c_f}{\sin \theta_i} t+\frac{\tau_i}{\sin \theta_i} \right\}}}\\
	&\quad+\frac{\left|\varepsilon L_{2}^\beta e^{- \alpha \beta c_{f}  \eta}\right|}{ e^{-2 v^{\star} \min_{1\leq i\leq n} \left\{x \cdot \nu_i \cot \theta_i+y-\frac{c_f}{\sin \theta_i} t+\frac{\tau_i}{\sin \theta_i} \right\}}}\\
	&=:O_{1}+O_{2}+O_{3},
	\end{align*}
where $\widetilde{C}=\widetilde{C}(\alpha)=\frac{\sup_{(\alpha t, \alpha x) \in\mathbb{R}\times\mathbb{R}^{N-1}}|\varphi(\alpha t, \alpha x, \alpha)-\psi(\alpha t, \alpha x, \alpha)|}{\alpha}$ and $\alpha \widetilde{C} \leq C$. Moreover, we have
	\begin{align*}
		O_{1}&=\frac{L_{2} e^{- \alpha c_{f} \xi}}{ e^{-2 v^{\star} \min_{1\leq i\leq n} \left\{x \cdot \nu_i \cot \theta_i+y-\frac{c_f}{\sin \theta_i} t+\frac{\tau_i}{\sin \theta_i} \right\}}}\\
		&\leq \frac{L_{2} e^{- \alpha c_{f} \left(\frac{y-\psi(\alpha t, \alpha x, \alpha y)/\alpha-\widetilde{C}}{\sqrt{1+(C_1+\max_{1\le i\le n} \{|\nu_i \cot \theta_i|\})^2}}\right)}}{ e^{-2 v^{\star} \min_{1\leq i\leq n} \left\{x \cdot \nu_i \cot \theta_i+y-\frac{c_f}{\sin \theta_i} t+\frac{\tau_i}{\sin \theta_i} \right\}}}\\
		& \leq \frac{L_{2} e^{- \alpha c_{f} \left(\frac{-\widetilde{C}}{\sqrt{1+(C_1+\max_{1\le i\le n} \{|\nu_i \cot \theta_i|\})^2}}\right)} \cdot e^{- \alpha c_{f} \left(\frac{y-\psi(\alpha t, \alpha x, \alpha y)/\alpha}{\sqrt{1+(C_1+\max_{1\le i\le n} \{|\nu_i \cot \theta_i|\})^2}}\right)}}{ e^{-2 v^{\star} \min_{1\leq i\leq n} \left\{x \cdot \nu_i \cot \theta_i+y-\frac{c_f}{\sin \theta_i} t+\frac{\tau_i}{\sin \theta_i} \right\}}}\\
		& \leq \frac{L_{2} e^{c_{f} \left(\frac{C}{\sqrt{1+(C_1+\max_{1\le i\le n} \{|\nu_i \cot \theta_i|\})^2}}\right)}\cdot e^{- \alpha c_{f}  \left(\frac{\min_{1\leq i\leq n} \left\{x \cdot \nu_i \cot \theta_i+y-\frac{c_f}{\sin \theta_i} t+\frac{\tau_i}{\sin \theta_i} \right\}\cdot \min_{1\leq i\leq n}\left\{\sin \theta_{i} \right\}}{\sqrt{1+(C_1+\max_{1\le i\le n} \{|\nu_i \cot \theta_i|\})^2}}\right)}}{ e^{-2 v^{\star} \min_{1\leq i\leq n} \left\{x \cdot \nu_i \cot \theta_i+y-\frac{c_f}{\sin \theta_i} t+\frac{\tau_i}{\sin \theta_i} \right\}}}.
	\end{align*}
Since $\min_{1\leq i\leq n} \left\{x \cdot \nu_i \cot \theta_i+y-\frac{c_f}{\sin \theta_i} t+\frac{\tau_i}{\sin \theta_i} \right\} \rightarrow+\infty$, one can have $O_{1}\leq L_{2} e^{c_{f}  C}\varepsilon$, by setting $0<v^{\star}<v_{1}^{\star}:=\frac{\alpha c_{f} \min_{1\leq i\leq n}\left\{\sin \theta_{i} \right\}}{2\sqrt{1+(C_1+\max_{1\le i\le n} \{|\nu_i \cot \theta_i|\})^2}}$.
If set $0<v^{\star}<v_{2}^{\star}:=\frac{c_{f}}{2} \min_{1\leq i\leq n}\left\{\sin \theta_{i} \right\}$, then we know
\begin{align*}
		O_{2}\leq L_{2} \frac{e^{-\alpha c_{f}  \min_{1\leq i\leq n} \left\{x \cdot \nu_i \cot \theta_i+y-\frac{c_f}{\sin \theta_i} t+\frac{\tau_i}{\sin \theta_i} \right\}\cdot \min_{1\leq i\leq n}\left\{\sin \theta_{i} \right\}}}{ e^{-2 v^{\star} \min_{1\leq i\leq n} \left\{x \cdot \nu_i \cot \theta_i+y-\frac{c_f}{\sin \theta_i} t+\frac{\tau_i}{\sin \theta_i} \right\}}}\leq L_{2} \varepsilon
\end{align*}
as $\min_{1\leq i\leq n} \left\{x \cdot \nu_i \cot \theta_i+y-\frac{c_f}{\sin \theta_i} t+\frac{\tau_i}{\sin \theta_i} \right\} \rightarrow+\infty$. Setting $0<v^{\star}<v_{3}^{\star}:=\frac{\alpha \beta c_{f}}{2} $, one has
\begin{align*}
	O_{3}&=\frac{\left|\varepsilon L_{2}^\beta e^{- \alpha \beta c_{f}  \eta}\right|}{ e^{-2 v^{\star} \min_{1\leq i\leq n} \left\{x \cdot \nu_i \cot \theta_i+y-\frac{c_f}{\sin \theta_i} t+\frac{\tau_i}{\sin \theta_i} \right\}}}\\
	&\leq\frac{\left|\varepsilon L_{2}^{\beta} e^{- \alpha \beta c_{f} (y-\psi(\alpha t, \alpha x, \alpha)/\alpha-\widetilde{C})}\right|}{ e^{-2 v^{\star} \min_{1\leq i\leq n} \left\{x \cdot \nu_i \cot \theta_i+y-\frac{c_f}{\sin \theta_i} t+\frac{\tau_i}{\sin \theta_i} \right\}}}\\
	&=\frac{\varepsilon L_{2}^{\beta} e^{-\alpha \beta c_{f} \left(\min_{1\leq i\leq n} \left\{x \cdot \nu_i \cot \theta_i+y-\frac{c_f}{\sin \theta_i} t+\frac{\tau_i}{\sin \theta_i} \right\}-\widetilde{C}\right)}}{ e^{-2 v^{\star} \min_{1\leq i\leq n} \left\{x \cdot \nu_i \cot \theta_i+y-\frac{c_f}{\sin \theta_i} t+\frac{\tau_i}{\sin \theta_i} \right\}}}\\
	&\leq \varepsilon L_{2}^{\beta} e^{\beta c_{f} C} \cdot \frac{ e^{-\alpha \beta c_{f} \min_{1\leq i\leq n} \left\{x \cdot \nu_i \cot \theta_i+y-\hat{c} t \right\}}}{ e^{-2 v^{\star} \min_{1\leq i\leq n} \left\{x \cdot \nu_i \cot \theta_i+y-\frac{c_f}{\sin \theta_i} t+\frac{\tau_i}{\sin \theta_i} \right\}}}\leq \varepsilon L_{2}^{\beta} e^{\beta c_{f} C} .
\end{align*}
Hence, if we set $0<v^{\star}<\min\{v_{1}^{\star}, v_{2}^{\star}, v_{3}^{\star}\}$ and $C_{1}^{\star}=L_{2} e^{c_{f}  C}+L_{2}+L_{2}^{\beta} e^{\beta c_{f} C}$, then it holds that
\begin{equation*}
	\frac{\left|\bar{V}(t, x, y)-\underline{V}(t, x, y)\right|}{ e^{-2 v^{\star} \min_{1\leq i\leq n} \left\{x \cdot \nu_i \cot \theta_i+y-\frac{c_f}{\sin \theta_i} t+\frac{\tau_i}{\sin \theta_i} \right\}}}\leq C_{1}^{\star}\varepsilon 
\end{equation*}
for $(t, x, y) \in \mathbb{R} \times \mathbb{R}^N$ such that $d\left((t, x, y), \partial \mathcal{P}\right) \geq \rho$ as $\rho \rightarrow+\infty$.

\textbf{Subcase 2:} For $(t, x, y) \in \mathbb{R} \times \mathbb{R}^N$ such that $d\left((t, x, y), 
\partial \mathcal{P}\right)<+\infty$ and $d\left((t, x, y), \mathcal{R}\right) \geq \rho$ as $\rho 
\rightarrow+\infty$, there is some $i \in\{1, \ldots, n\}$ such that
\begin{equation*}
	\begin{aligned}
		\left|x \cdot \nu_i \cos \theta_i+y \sin \theta_i-c_f t+\tau_i\right| \text { is bounded and } x \cdot \nu_j \cos \theta_j +y \sin \theta_j-c_f t+\tau_j \rightarrow+\infty \text { for } j \neq i.
	\end{aligned}
\end{equation*}
Thus there exists a constant $C_{3}>0$ such that $\left|x \cdot \nu_i \cos \theta_i+y \sin \theta_i-c_f t+\tau_i\right|\leq C_{3}$ and it holds that $\underline{V}(t, x, y)=U\left(x \cdot \nu_i \cos \theta_i+y \sin \theta_i-c_f t+\tau_i\right)$. Since $\alpha>0$, we have
\begin{equation*}
	q_i(\alpha t, \alpha x, \alpha y, \alpha) \text { is bounded and } q_j(\alpha t, \alpha x, \alpha y, \alpha) \rightarrow+\infty \text { for } j \neq i .
\end{equation*}
This implies $d((\alpha t, \alpha x, \alpha y), \partial \mathcal{Q}) < +\infty$ and $d((\alpha t, \alpha x, \alpha y), \mathcal{R}) \rightarrow+\infty$, then it holds that $(\alpha t, \alpha x) \in \widehat{Q}_i$ and $d((\alpha t, \alpha x), \widehat{G}) \rightarrow+\infty$. It follows from Lemmas \ref{Lemma 3.2} and \ref{Lemma 3.3} that
\begin{equation*}
	\left|\varphi(\alpha t, \alpha x, \alpha)-\left(-\alpha x \cdot \nu_i \cot \theta_i+\frac{c_f}{\sin \theta_i} \alpha t+\frac{\alpha \tau_i}{\sin \theta_i}\right)\right| \rightarrow 0
\end{equation*}
and
\begin{equation*}
\left|\nabla \varphi(\alpha t, \alpha x, \alpha)+\nu_i \cot \theta_i\right| \rightarrow 0 .
\end{equation*}
Therefore
\begin{equation}\label{3.47}
	\xi(t, x, y) \rightarrow x \cdot \nu_i \cos \theta_i+y \sin \theta_i-c_f t+\tau_i=\mu(t, x, y).
\end{equation}
Then it follows that
	\begin{align*}
		&\frac{\left|\bar{V}(t, x, y)-\underline{V}(t, x, y)\right|}{ e^{-2 v^{\star} \min_{1\leq i\leq n} \left\{x \cdot \nu_i \cot \theta_i+y-\frac{c_f}{\sin \theta_i} t+\frac{\tau_i}{\sin \theta_i} \right\}}} \\
		&\leq\frac{\left|U(\xi)+\varepsilon h(\alpha t, \alpha x, \alpha)-U(\mu)\right|}{ e^{-2 v^{\star} \min_{1\leq i\leq n} \left\{x \cdot \nu_i \cot \theta_i+y-\frac{c_f}{\sin \theta_i} t+\frac{\tau_i}{\sin \theta_i} \right\}}} \\
		&\leq \frac{\left|U(\xi)-U(\mu)\right|}{ e^{-2 v^{\star} \min_{1\leq i\leq n} \left\{x \cdot \nu_i \cot \theta_i+y-\frac{c_f}{\sin \theta_i} t+\frac{\tau_i}{\sin \theta_i} \right\}}}+\frac{\left|\varepsilon h(\alpha t, \alpha x, \alpha)\right|}{ e^{-2 v^{\star} \min_{1\leq i\leq n} \left\{x \cdot \nu_i \cot \theta_i+y-\frac{c_f}{\sin \theta_i} t+\frac{\tau_i}{\sin \theta_i} \right\}}}=: O_{4}+O_{5}.
	\end{align*}
It follows from \eqref{3.47} and $0<v^{\star}< v_{2}^{\star}$ that 
\begin{align*}
	O_{4}\leq  \varepsilon  e^{2 v^{\star} \min_{1\leq i\leq n} \left\{x \cdot \nu_i \cot \theta_i+y-\frac{c_f}{\sin \theta_i} t+\frac{\tau_i}{\sin \theta_i} \right\}}\leq  \varepsilon  e^{c_{f}  C_{3}}
\end{align*}
and
\begin{align*}
		O_{5}&= \frac{\left|\varepsilon h(\alpha t, \alpha x, \alpha)\right|}{ e^{-2 v^{\star} \min_{1\leq i\leq n} \left\{x \cdot \nu_i \cot \theta_i+y-\frac{c_f}{\sin \theta_i} t+\frac{\tau_i}{\sin \theta_i} \right\}}}\\
		&\leq \varepsilon e^{2 v^{\star} \min_{1\leq i\leq n} \left\{x \cdot \nu_i \cot \theta_i+y-\frac{c_f}{\sin \theta_i} t+\frac{\tau_i}{\sin \theta_i} \right\}}\leq  \varepsilon  e^{c_{f}  C_{3}} .
\end{align*}
Hence, by setting $0<v^{\star}<\min\{v_{1}^{\star}, v_{2}^{\star}, v_{3}^{\star}\}$ and $C_{2}^{\star}= 2 e^{c_{f}  C_{3}}$, one has 
\begin{equation*}
\frac{\left|\bar{V}(t, x, y)-\underline{V}(t, x, y)\right|}{ e^{-2 v^{\star} \min_{1\leq i\leq n} \left\{x \cdot \nu_i \cot \theta_i+y-\frac{c_f}{\sin \theta_i} t+\frac{\tau_i}{\sin \theta_i} \right\}}}\leq C_{2}^{\star}\varepsilon
\end{equation*}
for $(t, x, y) \in \mathbb{R} \times \mathbb{R}^N$ such that $d\left((t, x, y), \partial \mathcal{P}\right)<+\infty$ and $d\left((t, x, y), \mathcal{R}\right) \geq \rho$ as $\rho \rightarrow+\infty$.

In conclusion, there exist constants $0<v^{\star}<\min\{ v_{1}^{\star}, v_{2}^{\star}, v_{3}^{\star}\}$ and $C^{\star}>\max\{C_{1}^{\star}, C_{2}^{\star}\}$ such that \eqref{3.46} holds true.

\subsection{Proofs of Theorems \ref{Theorem 2.1} and \ref{Theorem 2.3}}
\noindent
In this section, we prove the following results regarding \eqref{3.1}, which obviously imply Theorems \ref{Theorem 2.1} and \ref{Theorem 2.3}. Let $V$ denote $V_{\boldsymbol{\nu}, \boldsymbol{\theta}, \boldsymbol{\tau}}$ in Propositions \ref{Proposition 3.5} and \ref{Proposition 3.6} for short.
\begin{proposition}\label{Proposition 3.5} 
 Suppose that $(F1)$ and $(F2)$ hold. Take $n$ $(\geq 2)$ unit vectors $\nu_i \in \mathbb{S}^{N-2}$ and $n$ angles $\theta_i\in (0, \pi / 2]$ $(i=1, \ldots, n)$ such that $\left(\nu_i, \theta_i\right) \neq\left(\nu_j, \theta_j\right)$ for $i \neq j$, and take $n$ constants $\tau_i$ $(i\in\{1, \ldots, n\})$. Let $\boldsymbol{\nu}=\left(\nu_1, \ldots, \nu_n\right)$, $\boldsymbol{\theta}=\left(\theta_1, \ldots, \theta_n\right)$ and $\boldsymbol{\tau}=\left(\tau_1, \ldots, \tau_n\right)$. Then there exists an entire solution $V_{\boldsymbol{\nu}, \boldsymbol{\theta}, \boldsymbol{\tau}}$ of \eqref{3.1} satisfying
\begin{equation*}
\underline{V}<V_{\boldsymbol{\nu}, \boldsymbol{\theta}, \boldsymbol{\tau}}<1 \quad \text{in } \mathbb{R} \times \mathbb{R}^N
\end{equation*}
and
\begin{equation}\label{3.48}
	\frac{\left|V_{\boldsymbol{\nu}, \boldsymbol{\theta}, \boldsymbol{\tau}}(t, z)-\underline{V}(t, z)\right|}{\min \left\{1, e^{-2 v^{\star} \min_{1\leq i\leq n} \left\{x \cdot \nu_i \cot \theta_i+y-\frac{c_f}{\sin \theta_i} t+\frac{\tau_i}{\sin \theta_i}  \right\}}\right\} } \rightarrow 0 \quad\text{uniformly as } d((t, x, y), \mathcal{R}) \rightarrow+\infty,
\end{equation} 
where $v^{\star}$ is a positive constant. Moreover,
\begin{description}
\item[(i)] For any $\left(t_0, x_0, y_0\right) \in \mathbb{R} \times \mathbb{R}^N$ satisfying $\min _{i=\{1, \ldots, n\}}\left\{x_0 \cdot \nu_i \cos \theta_i+y_0 \sin \theta_i-c_f t_0\right\} \geq 0$, one has
\begin{equation*}
V_{\boldsymbol{\nu}, \boldsymbol{\theta}, \boldsymbol{\tau}}\left(t-t_0, x-x_0, y-y_0\right) \geq V_{\boldsymbol{\nu}, \boldsymbol{\theta}, \boldsymbol{\tau}}(t, x, y).
\end{equation*}

\item[(ii)] For fixed $\left(\nu_i, \theta_i\right)$, $V_{\boldsymbol{\nu}, \boldsymbol{\theta}, \boldsymbol{\tau}}$ are decreasing in $\tau_i \in \mathbb{R}$.
\end{description}
\end{proposition}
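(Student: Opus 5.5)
The plan is the standard sub/supersolution squeeze with a monotone limit from $t=-\infty$. Fix $\beta\in(0,\beta^*]$, $\varepsilon\in(0,\varepsilon_0^+(\beta))$ and $\alpha\in(0,\alpha_0^+(\beta,\varepsilon))$ as in Lemma \ref{Lemma 3.4}. Then $\underline{V}$ is a subsolution of \eqref{3.1} (a maximum of planar fronts), $\bar V$ is a supersolution, and $\underline{V}\le\bar V\le 1$ by \eqref{3.15}.

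For $k\in\mathbb{N}$, let $u_k$ solve \eqref{3.1} for $t>-k$ with $u_k(-k,\cdot)=\underline{V}(-k,\cdot)$. By the comparison principle, $\underline{V}\le u_k\le\bar V$ for $t\ge -k$. Since $\underline V$ is a subsolution, $u_{k+1}(-k,\cdot)\ge\underline V(-k,\cdot)=u_k(-k,\cdot)$, so $u_{k+1}\ge u_k$ on $[-k,\infty)$. The sequence is therefore nondecreasing in $k$. Parabolic Schauder estimates, using $f\in C^{1+\vartheta}$, give local $C^{1+\vartheta/2,2+\vartheta}$ bounds. Hence $u_k\to V$ in $C^{1,2}_{loc}$, and $V$ is an entire solution with $\underline V\le V\le\bar V$. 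Strict inequalities follow from the strong maximum principle. $V\not\equiv\underline V$ because $\underline V$ is not a solution near ridges (it has a convex corner there, hence is a strict subsolution in the distributional sense). $V<1$ holds because $\bar V<1$ somewhere, so $V\not\equiv 1$, and $1$ is a solution.

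For \eqref{3.48}, combine $0\le V-\underline V\le\bar V-\underline V$ with \eqref{3.46}. This gives that the ratio with weight $\min\{1,e^{-2v^\star\min\{\cdots\}}\}$ is eventually $\le C^\star\varepsilon$ as $d((t,x,y),\mathcal R)\to\infty$. The limit $V$ constructed this way does not depend on $(\alpha,\varepsilon,\beta)$: the initial data $\underline V(-k,\cdot)$ do not involve these parameters. So we may let $\varepsilon\to 0$, shrinking $\alpha$ accordingly. The obstacle is that $v^\star$ in \eqref{3.46} depends on $\alpha$ and $\beta$, and shrinks as $\alpha\to 0$. I would resolve this by fixing one admissible $v^\star$ from a fixed pair $(\alpha_1,\varepsilon_1)$. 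I would then observe that for smaller $\alpha$ the bound still holds with that same $v^\star$. Where $\min\{\cdots\}$ is bounded, the weight is comparable to $1$ and \eqref{3.14} already gives $2\varepsilon$. In the far region, $\bar V_{\alpha_1}-\underline V$ carries the exponential decay, while $\bar V_\alpha$ gives smallness. Concretely, take $\min\{C^\star_{\alpha_1}\varepsilon_1 w,\ \bar V_\alpha-\underline V\}$ and split into regions where $\min\{\cdots\}\le R$ and $>R$, choosing $R$ large. This step requires the most care, and I expect it to be the main technical point.

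For (i), let $(t_0,x_0,y_0)$ satisfy the stated condition. Then $\underline V(t-t_0,x-x_0,y-y_0)\ge\underline V(t,x,y)$ by monotonicity of $U$, since each argument decreases. Hence the solution started at time $-k$ from the shifted data dominates $u_k$ at time $-k$ (after shifting $k$ by $t_0$, and using $\underline V$ as a subsolution to align the starting times). Comparison and passage to the limit give the inequality. For (ii), if $\tau_i\le\tau_i'$ then $\underline V_{\boldsymbol\tau}\ge\underline V_{\boldsymbol\tau'}$, so $u_k^{\boldsymbol\tau}\ge u_k^{\boldsymbol\tau'}$ and, in the limit, $V_{\boldsymbol\tau}\ge V_{\boldsymbol\tau'}$. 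Strictness follows from the strong maximum principle together with the asymptotics \eqref{3.48}, which distinguish the two solutions.
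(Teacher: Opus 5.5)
Your proposal is correct and follows the paper's proof essentially step by step: the monotone limit of the Cauchy problems started from $\underline{V}(-k,\cdot)$, the squeeze $\underline V\le u_k\le\bar V$ from Lemma~\ref{Lemma 3.4}, and comparison of ordered (shifted, or $\tau$-ordered) initial data for (i) and (ii). Your treatment of the parameter-independence of $V$ and of the time alignment in (i) is more careful than the paper's. On the $\alpha$-dependence of $v^\star$: splitting at $\min\{\cdots\}=R$ only gives smallness in the far region if the weight's exponent is taken strictly below the decay rate of $\bar V_{\alpha_1}-\underline V$. The paper's appeal to \eqref{3.45} (the line with $\sqrt{V-\underline V}$) is evidently meant to supply exactly this margin by interpolation: writing $m$ for the minimum in the exponent, $V-\underline V\le C_\star e^{-2v_\star m}$ for one fixed $\alpha$ together with $V-\underline V\to0$ gives $(V-\underline V)e^{v_\star m}\le\sqrt{C_\star(V-\underline V)}\to0$.
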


\begin{proof}
Let $u_n(t, x, y)$ be the solution of following Cauchy problem
\begin{equation}\label{3.49}
\begin{cases}\partial_t u-\Delta_{x, y} u=f(u) & \text { when } t>-n, (x, y) \in \mathbb{R}^N ,\\ u(t, x, y)=\underline{V}(-n, x, y) & \text { when } t=-n, (x, y) \in \mathbb{R}^N.\end{cases}
\end{equation}
By Lemma \ref{Lemma 3.4} and the comparison principle, it holds that
\begin{equation}\label{3.50}
\underline{V}(t, x, y) \leq u_n(t, x, y) \leq \bar{V}_{\varepsilon, \alpha}(t, x, y)\quad \text {for } t \geq-n \text { and }(x, y) \in \mathbb{R}^N,
\end{equation}
where $0<\varepsilon<\varepsilon_0^{+}(\beta)$ and $0<\alpha<\alpha_0^{+}(\beta, \varepsilon)$ are sufficiently small constants that satisfy the conditions set in Lemma \ref{Lemma 3.4}. 

Since $\underline{V}(t, x, y)$ is a subsolution of \eqref{3.1}, the sequence $u_n(t, x, y)$ is increasing in $n$. By parabolic estimates, letting $n \rightarrow +\infty$, the sequence $\left\{u_n(t, x, y)\right\}_{n \in \mathbb{N}}$ converges locally uniformly in $C^{1, 2}$ to an entire solution $V(t, x, y)$ of \eqref{3.1}. Furthermore, it follows from \eqref{3.50} and the strong maximum principle that
\begin{equation*}
\underline{V}(t, x, y) < V(t, x, y) < \bar{V}_{\varepsilon, \alpha}(t, x, y) \quad \text {for }(t, x, y) \in \mathbb{R} \times \mathbb{R}^N.
\end{equation*}
By \eqref{3.45}, it holds that 
\begin{equation*}
\sqrt{V(t, x, y)-\underline{V}(t, x, y)} \leq C_{\star} \min \left\{1, e^{-2 v_{\star} \min_{1\leq i\leq n} \left\{x \cdot \nu_i \cot \theta_i+y-\frac{c_f}{\sin \theta_i} t+\frac{\tau_i}{\sin \theta_i}   \right\}}\right\} \quad \text{in }\mathbb{R} \times \mathbb{R}^N.
\end{equation*}
Since \eqref{3.14} and \eqref{3.46}, letting $\varepsilon \rightarrow 0$ in $\bar{V}_{\varepsilon, \alpha}(t, x, y)$ yields $0 < V < 1$ and \eqref{3.48} holds. It follows from \eqref{3.48} and the definition of $\underline{V}(t, x, y)$ that $V(t, x, y)$ is a transition front with sets
\begin{equation*}
\left\{\begin{array}{l}
	\Gamma_t=\left\{(x, y) \in \mathbb{R}^N ; \min _{1 \leq i \leq n}\left\{x \cdot \nu_i \cos \theta_i+y \sin \theta_i-c_f t+ \tau_i \right\}=0\right\},\\
	\Omega_t^{-}=\left\{(x, y) \in \mathbb{R}^N ;  \min _{1 \leq i \leq n}\left\{x \cdot \nu_i \cos \theta_i+y \sin \theta_i-c_f t+ \tau_i \right\}>0\right\},\\
	\Omega_t^{+}=\left\{(x, y) \in \mathbb{R}^N ;  \min _{1 \leq i \leq n}\left\{x \cdot \nu_i \cos \theta_i+y \sin \theta_i-c_f t+ \tau_i \right\}<0\right\}.
\end{array}\right.
\end{equation*}
It follows from \eqref{3.49} that for any $s>0$, $u_n(t+s, x, y)$ solves the following equation 
\begin{equation*}
\begin{cases}\partial_t u-\Delta_{x, y} u=f(u) & \text { when } t>-n,(x, y) \in \mathbb{R}^N, \\ u(t, x, y)=u_n(-n+s, x, y) & \text { when } t=-n,(x, y) \in \mathbb{R}^N,\end{cases}
\end{equation*}
for each $n \in \mathbb{N}$. Then \eqref{3.50} implies that
\begin{equation*}
u_n(-n+s, x, y) \geq \underline{V}(-n+s, x, y) \geq \underline{V}(-n, x, y),\quad \forall(x, y) \in \mathbb{R}^N.
\end{equation*}
By the comparison principle, it holds that
\begin{equation*}
u_n(t+s, x, y) \geq u_n(t, x, y), \quad \forall(t, x, y) \in(-n,+\infty) \times \mathbb{R}^N, \;\forall \tau>0,
\end{equation*}
which shows $\partial_t u_n(t, x, y) \geq 0$ in $(-n,+\infty) \times \mathbb{R}^N$. Hence, letting $n \rightarrow +\infty$, $\partial_t V(t, x, y) \geq 0$ in $\mathbb{R} \times \mathbb{R}^N$. It follows from the strong maximum principle that $\partial_t V(t, x, y) > 0$ in $\mathbb{R} \times \mathbb{R}^N$. 

Since $U^{\prime}<0$, for any $\left(t_0, x_0, y_0\right) \in \mathbb{R} \times \mathbb{R}^N$ such that $\min _{i=\{1, \ldots, n\}}\left\{x_0 \cdot \nu_i \cos \theta_i+y_0 \sin \theta_i-c_f t_0\right\} \geq 0$, it holds that
\begin{equation}\label{3.52}
\begin{aligned}
   &\underline{V}\left(t-t_0, x-x_0, y-y_0\right)\\
   &=\max _{1 \leq i \leq n}\left\{U \left(\left(x-x_0\right) \cdot \nu_i \cos \theta_i+\left(y-y_0\right) \sin \theta_i-c_f\left(t-t_0\right)+\tau_i\right)\right\} \geq \underline{V}(t, x, y).
\end{aligned}
\end{equation}
If $\min _{i=\{1, \ldots, n\}}\left\{x_0 \cdot \nu_i \cos \theta_i+y_0 \sin \theta_i-c_f t_0\right\}>0$, then one knows that \eqref{3.52} is strict. By the comparison principle, we obtain $u_n\left(t-t_0, x-x_0, y-y_0\right) \geq u_n(t, x, y)$. Letting $n \rightarrow+\infty$, one has $V\left(t-t_0, x-x_0, y-y_0\right) \geq V(t, x, y)$. 

The monotonicity of $V_{\boldsymbol{\nu}, \boldsymbol{\theta}, \boldsymbol{\tau}}$ with respect to $\tau_i$ can be derived in the same way. Fix any $\boldsymbol{\nu}=\left(\nu_1, \ldots, \nu_n\right)$ and $\boldsymbol{\theta}=\left(\theta_1, \ldots, \theta_n\right)$ such that $\left(\nu_i, \theta_i\right) \neq\left(\nu_j, \theta_j\right)$ for $i \neq j$. Denote $\boldsymbol{\tau}_{1}=\left(\tau_1^{1}, \ldots, \tau_n^{1}\right)$ and $\boldsymbol{\tau}_{2}=\left(\tau_1^{2}, \ldots, \tau_n^{2}\right)$, where $\tau_{l}^{1}<\tau_{l}^{2}$ for some $l\in\{1,\ldots,n\}$ and $\tau_{i}^{1}=\tau_{i}^{2}$ $(i \neq l)$. Define
\begin{equation*}
	\underline{V}_{\boldsymbol{\tau}_{1}}(t, x, y)=\max _{1 \leq i \leq n}\left\{U \left(\left(x-x_0\right) \cdot \nu_i \cos \theta_i+\left(y-y_0\right) \sin \theta_i-c_f\left(t-t_0\right)+\tau_i^{1}\right)\right\}
\end{equation*}
and
\begin{equation*}
	\underline{V}_{\boldsymbol{\tau}_{2}}(t, x, y)=\max _{1 \leq i \leq n}\left\{U \left(\left(x-x_0\right) \cdot \nu_i \cos \theta_i+\left(y-y_0\right) \sin \theta_i-c_f\left(t-t_0\right)+\tau_i^{2}\right)\right\}.
\end{equation*}
Then it holds that
\begin{equation*}
	\begin{aligned}
	&\min_{1 \leq i \leq n}\left\{U \left(\left(x-x_0\right) \cdot \nu_i \cos \theta_i+\left(y-y_0\right) \sin \theta_i-c_f\left(t-t_0\right)+\tau_i^{1}\right)\right\}\\
	&>\min_{1 \leq i \leq n}\left\{U \left(\left(x-x_0\right) \cdot \nu_i \cos \theta_i+\left(y-y_0\right) \sin \theta_i-c_f\left(t-t_0\right)+\tau_i^{2}\right)\right\}.
	\end{aligned}
\end{equation*}
Thus $\underline{V}_{\boldsymbol{\tau}_{1}}(t, x, y)>\underline{V}_{\boldsymbol{\tau}_{2}}(t, x, y)$. By the comparison principle, one has $u_{n, \boldsymbol{\tau}_{1}}(t, x, y)\geq u_{n, \boldsymbol{\tau}_{2}}(t, x, y)$. Letting $n \rightarrow+\infty$, we obtain $V_{\boldsymbol{\tau}_{1}}(t, x, y) \geq V_{\boldsymbol{\tau}_{2}}(t, x, y)$, and thereby $V_{\boldsymbol{\nu}, \boldsymbol{\theta}, \boldsymbol{\tau}}$ are decreasing in $\tau_i \in \mathbb{R}$ for fixed $\left(\nu_i, \theta_i\right)$.
\end{proof}

\begin{proposition}\label{Proposition 3.6}
 Suppose that $(F1)$ and $(F2)$ hold. If $\widetilde{V}$ is an entire solution of \eqref{3.1} satisfying
\begin{equation}\label{3.53}
	|\widetilde{V}(t, x, y)-\underline{V}(t, x, y)| \rightarrow 0 \quad\text{uniformly as } d((t, x, y), \mathcal{R}) \rightarrow+\infty,
\end{equation}
then $\widetilde{V} \equiv V$ in $\mathbb{R} \times \mathbb{R}^N$. Moreover, $V_{\boldsymbol{e}, \boldsymbol{\tau}}(t, z)$ depend continuously on $\left(\tau_1, \ldots, \tau_n\right) \in \mathbb{R}^n$ in the sense of $\mathcal{T}$.
\end{proposition}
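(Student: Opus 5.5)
We prove uniqueness with the sliding/squeezing argument of Guo--Wang \cite{H. Guo2024}, adapted to the degenerate combustion nonlinearity. The degeneracy is handled by perturbing only where $f'\le \frac12 f'(1)<0$, or by using the ordering of the solutions near $0$ where $f\equiv0$.

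The first step is to show $\widetilde V\ge \underline V$. Fix a small $\delta>0$ and compare $\widetilde V$ with the shifted subsolution
\[
\underline V_\delta(t,x,y)=\max_i U\bigl(x\cdot\nu_i\cos\theta_i+y\sin\theta_i-c_f t+\tau_i+\delta\bigr)-\delta' ,
\]
using a Fife--McLeod type correction. Because of \eqref{3.53}, $\widetilde V$ is close to $\underline V$ outside a neighbourhood of $\mathcal R$. Near $\mathcal R$ we first use the time monotonicity: by Proposition \ref{Proposition 3.5}(i) the shift $(t_0,0,0)$ with $t_0<0$ raises the comparison functions, and the ridges move with bounded speed, so for $t_0\ll-1$ the point set where the ordering could fail is controlled. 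The standard argument is then to set
\[
s^*=\inf\{s\ge0:\ \widetilde V(t,\cdot)\ge \underline V(t-s,\cdot)\ \text{on}\ \mathbb R\times\mathbb R^N\}.
\]
We show that $s^*$ is finite and then equal to $0$. Finiteness and the reduction of $s^*$ come from the strong maximum principle applied along a sequence realising the infimum, after translating and passing to the limit by parabolic estimates. In the limit, either the contact point is far from $\mathcal R$, which contradicts \eqref{3.53} together with the strict monotonicity of $U$, or it is at bounded distance from $\mathcal R$, and then the limit solutions touch, which contradicts strict ordering. The same argument gives $\widetilde V\le 1$.

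The second step compares $\widetilde V$ with the constructed $V$ in both directions. Since $V$ is the increasing limit of the $u_n$ started from $\underline V(-n,\cdot)$, the bound $\widetilde V\ge\underline V$ combined with the comparison principle on $[-n,\infty)$ gives $\widetilde V\ge u_n$ for all $n$. Letting $n\to\infty$ yields $\widetilde V\ge V$.

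For the reverse inequality $\widetilde V\le V$, use the translation family $V(t+s,x,y)$ with $s>0$. By Proposition \ref{Proposition 3.5}(i) and $\partial_tV>0$, $V(t+s,\cdot)$ is larger than $V$ and converges to $1$ as $s\to\infty$. Define
\[
s_*=\inf\{s\ge0:\ V(t+s,\cdot)\ge\widetilde V\}.
\]
Finiteness of $s_*$ requires a quantitative margin away from $\mathcal R$. On the region $\{\underline V\ge 1-2\gamma_\star\}$ we have $f'\le \frac12 f'(1)<0$, and there we use a supersolution of the form $V(t+s+\sigma\delta(1-e^{-\mu t}))+\delta e^{-\mu t}$. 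On the region where the solutions are near $0$, we have $f\equiv0$ on $[0,\theta]$, so the heat equation maximum principle applies; this is valid because $\widetilde V-\underline V\to0$ and both functions lie in $[0,\theta]$. To rule out $s_*>0$, argue by contradiction with a contact sequence: far from $\mathcal R$ the contact is excluded by the asymptotics, and near $\mathcal R$ it is excluded by the strong maximum principle in the limit. Hence $s_*=0$ and $\widetilde V\equiv V$.

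The main obstacle is the region where $\widetilde V$ is small. There the nonlinearity gives no exponential contraction, so the Fife--McLeod perturbation $\delta e^{-\mu t}$ cannot be absorbed. We plan to avoid adding any perturbation in that region and rely on $f\equiv0$ together with the uniform convergence \eqref{3.53}. If this fails, the fallback is to use the weighted tail $U^\beta(\eta)$ from Lemma \ref{Lemma 3.4} as the perturbation there.

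Continuity in $\boldsymbol\tau$ then follows from uniqueness. Take $\boldsymbol\tau_k\to\boldsymbol\tau_0$. By parabolic estimates $V_{\boldsymbol e,\boldsymbol\tau_k}$ has a subsequence converging in $C^{1,2}_{loc}$ to an entire solution $W$. The bounds $\underline V_{\boldsymbol\tau_k}\le V_{\boldsymbol\tau_k}\le\bar V_{\varepsilon,\alpha,\boldsymbol\tau_k}$, with constants uniform in $k$ for $\boldsymbol\tau$ in a compact set, pass to the limit and show that $W$ satisfies \eqref{3.53} for $\boldsymbol\tau_0$. Uniqueness then gives $W=V_{\boldsymbol e,\boldsymbol\tau_0}$, so the whole sequence converges.
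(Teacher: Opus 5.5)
\textbf{Gap: contact points escaping to infinity inside $\omega^{\pm}$.} In Step 1, and again when you rule out $s_*>0$ in Step 3, you claim that a contact point far from $\mathcal R$ is excluded by \eqref{3.53} together with the strict monotonicity of $U$. This is false deep inside $\Omega_t^{\pm}$. Write $\mu=\min_i\xi_i$. The margin available there is $U(\mu)-U(\mu+c_fs)$ (respectively $V(t+s,\cdot)-V$). It tends to $0$ as $|\mu|\to\infty$, exactly like the $o(1)$ error in \eqref{3.53}, so nothing is contradicted.

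The missing step is a regional comparison on the sets $\omega^{\pm}$ that the paper introduces. There $R$ is chosen so that both solutions are $\le\gamma_\star$ on $\omega^-$ and $\ge1-\gamma_\star$ on $\omega^+$. Once the order holds on the band $\omega$, it propagates to the two outer regions:
\begin{itemize}
\item to $\omega^+$, because $f'\le\frac12f'(1)<0$ there (sup argument on translated limits);
\item to $\omega^-$, because $f\equiv0$ there and the difference $w$ satisfies $\limsup w\le0$ as $d(\cdot,\mathcal R)\to+\infty$.
\end{itemize}
The condition $\limsup w\le0$ is where \eqref{3.53} genuinely enters, and it is indispensable. For instance, $1-e^{-c_f(x\cdot e-c_ft-R)}$ is a bounded positive caloric function in $\{x\cdot e-c_ft>R\}$ that vanishes on the boundary, so the maximum principle fails on this moving domain without it.

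A strict margin is then needed only on $\omega$. There $\mu\in[-R,R]$, so your ``far from $\mathcal R$'' argument is correct. Near $\mathcal R$ one uses the strong maximum principle in the translated limit. In Step 1 it must be applied to the active branch $U(\xi_i+c_fs^*)$, since $\underline V$ is only a subsolution, and then contradicted with \eqref{3.53} along another facet. You state the regional comparison only for the finiteness of $s_*$ in Step 3, but it is needed in every sliding step.

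\textbf{Finiteness of $s^*$ in Step 1.} This does not follow from ``a sequence realising the infimum'', since the set may be empty. Nor does it follow from the time shift of Proposition \ref{Proposition 3.5}(i), which raises a lower barrier, the wrong direction. What you need is the following.
\begin{itemize}
\item Show $\inf_{\omega}\widetilde V>0$, using translation, the strong maximum principle and \eqref{3.53}, together with $0\le\widetilde V\le1$. The paper also takes this bound for granted.
\item Note that $\underline V(t-s,\cdot)\le U(-R+c_fs)$ on $\omega$, which is below $\inf_\omega\widetilde V$ for $s$ large.
\item Apply the regional comparison to extend the order to $\omega^{\pm}$.
\end{itemize}

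\textbf{Fife--McLeod terms.} The perturbations $\delta e^{-\mu t}$ cannot be used in a comparison on all of $\mathbb R\times\mathbb R^N$, because they blow up as $t\to-\infty$. They also fail near $0$ for this nonlinearity. With the regional comparison, no perturbation is needed at all.

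\textbf{Comparison with the paper's route.} Once repaired as above, your route works, and it differs from the paper's mainly in how the lower bound is obtained. The paper omits the argument, referring to Guo--Wang in the Berestycki--Hamel spirit. That argument presumably slides time-translates of the monotone front $V$ against $\widetilde V$ in both directions, using the $\omega^\pm/\omega$ decomposition. You instead get $\widetilde V\ge V$ from $\widetilde V\ge\underline V$ and the minimality of $V$ as the increasing limit of the $u_n$. This is a neat use of the construction, but it needs the same ingredients: positivity on $\omega$, the regional comparison, and the strong maximum principle near $\mathcal R$. Your proof of continuity in $\boldsymbol\tau$, via compactness, $\underline V_{\boldsymbol\tau_0}\le W\le\bar V_{\boldsymbol\tau_0}$ and uniqueness, is fine.
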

\begin{proof}
	Since $V$ satisfies \eqref{3.48} and $\widetilde{V}$ satisfies \eqref{3.53}, they are both transition fronts connecting $0$ and $1$ with sets $\Gamma_t$, $\Omega_t^{-}$ and $\Omega_t^{+}$. Then there is a constant $R > 0$ such that $0<\widetilde{V}(t, x, y), V(t, x, y) \leq \gamma_{\star}$ for $(t, x, y) \in \omega^{-}$ and $1-\gamma_{\star} \leq \widetilde{V}(t, x, y), V(t, x, y)<1$ for $(t, x, y) \in \omega^{+}$,
	where
\begin{equation*}
	\omega^{+}:=\left\{(t, x, y) \in \mathbb{R} \times \mathbb{R}^N ; \min _{1 \leq i \leq n}\left\{x \cdot \nu_i \cos \theta_i+y \sin \theta_i-c_f t+\tau_i\right\} <-R\right\},
\end{equation*}
\begin{equation*}
	\omega^{-}:=\left\{(t, x, y) \in \mathbb{R} \times \mathbb{R}^N ; \min _{1 \leq i \leq n}\left\{x \cdot \nu_i \cos \theta_i+y \sin \theta_i-c_f t+\tau_i\right\} > R\right\}
\end{equation*}
and
\begin{equation*}
	\omega:=\left\{(t, x, y) \in \mathbb{R} \times \mathbb{R}^N ;-R \leq \min _{1 \leq i \leq n}\left\{x \cdot \nu_i \cos \theta_i+y \sin \theta_i-c_f t+\tau_i\right\} \leq R\right\} .
\end{equation*}
The remaining proof  is similar to \cite[Propositions 3.9 and 3.10]{H. Guo2024} and  is omitted 
here.
\end{proof}

Finally, similar to \cite[Lemma 3.11]{H. Guo2024}, it can be shown that $V$ is monotone in $t$.
\begin{lemma}\label{Lemma 3.7}
   In $\mathbb{R} \times \mathbb{R}^N$, $\partial_t V> 0$. For any $\rho>0$, there exists a $k(\rho)>0$ such that
\begin{equation*}
	\partial_t V \geq k(\rho) \quad \text{in }\{(t, x, y): d((t, x, y), \partial \mathcal{P}) \leq \rho\} .
\end{equation*}
\end{lemma}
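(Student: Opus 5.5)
The first claim, $\partial_t V>0$, has essentially been obtained in the proof of Proposition \ref{Proposition 3.5}. There the comparison $u_n(t+s,\cdot)\ge u_n(t,\cdot)$ gives $\partial_t V\ge 0$. The function $w:=\partial_t V$ solves the linear parabolic equation $w_t-\Delta w=f'(V)w$ with bounded coefficient, because $f\in C^{1+\vartheta}$ and parabolic estimates give $w$ bounded and smooth. Since $V$ is not stationary (indeed $\underline V<V<\bar V_{\varepsilon,\alpha}$ forces $V(t,\cdot)$ to move with the fronts), $w\not\equiv0$. The strong maximum principle applied on $(-\infty,t]\times\mathbb{R}^N$ then yields $w>0$ everywhere. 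I will state this cleanly, since a $w$ vanishing at one point would force $w\equiv0$ for all earlier times and hence, by time-translation of $V$, a contradiction.

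For the uniform lower bound I argue by contradiction, following \cite[Lemma 3.11]{H. Guo2024}. Suppose there are points $(t_k,x_k,y_k)$ with $d((t_k,x_k,y_k),\partial\mathcal{P})\le\rho$ and $\partial_tV(t_k,x_k,y_k)\to0$. I split according to whether $d((t_k,x_k,y_k),\mathcal{R})$ stays bounded or tends to infinity.

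\textbf{Case of bounded distance to ridges.} By the invariance in Proposition \ref{Proposition 3.5}(i), or more simply by the structure of $\mathcal{P}$, I would translate the points along the direction in which the polytope moves. The translates $V_k(t,z):=V(t+t_k,z+z_k)$ then satisfy the same asymptotics relative to a shifted polytope whose parameters $\tau_i^k$ stay bounded. The parameters vary within a compact set because the point lies within distance $\rho$ of $\partial\mathcal P$ and near a ridge. Passing to a subsequence with $\boldsymbol\tau^k\to\boldsymbol\tau_\infty$, the uniqueness and continuous-dependence statement of Proposition \ref{Proposition 3.6} gives $V_k\to V_{\boldsymbol e,\boldsymbol\tau_\infty}$ in $C^{1,2}_{loc}$. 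Hence $\partial_tV_{\boldsymbol e,\boldsymbol\tau_\infty}(0,0)=0$, contradicting the first part applied to $V_{\boldsymbol e,\boldsymbol\tau_\infty}$.

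\textbf{Case of distance to ridges tending to infinity.} Near a single facet $P_i$, the estimate \eqref{3.48} shows that $V$ converges, together with its derivatives by parabolic estimates, to the planar front $U(z\cdot e_i-c_ft+\tau_i)$. This convergence is uniform in the region at distance at most $\rho$ from $P_i$. Thus
\[
\partial_tV\to -c_fU'(z\cdot e_i-c_ft+\tau_i)\ge c_f\min_{|s|\le\rho}|U'(s)|>0,
\]
again a contradiction.

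The main obstacle is the first case. One must show that the translated limits are again curved fronts of the same type, that is, entire solutions satisfying \eqref{3.53} with respect to a limiting polytope, so that Proposition \ref{Proposition 3.6} applies. Alternatively, one can argue directly that the limit $V_\infty$ satisfies $\partial_tV_\infty\ge0$ with $\partial_tV_\infty(0,0)=0$. The strong maximum principle then gives $\partial_tV_\infty\equiv0$ for $t\le0$, which contradicts the fact that $V_\infty$ stays within $o(1)$ of a moving $\underline V_\infty$ far from the ridges. I would take this second route first, since it only needs \eqref{3.48} to pass to the limit uniformly in the translations, and that uniformity is guaranteed because the bound \eqref{3.46} depends only on the distance to $\mathcal R$.
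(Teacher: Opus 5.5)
Your second route is correct and is the standard argument behind Lemma 3.11 of Guo and Wang, which the paper cites in place of a proof: translate along the bad sequence, extract a limit $V_\infty$ with $\partial_tV_\infty\ge0$ and $\partial_tV_\infty(0,0)=0$, use the strong maximum principle to make $V_\infty$ stationary on $(-\infty,0]\times\mathbb{R}^N$, and contradict the inherited asymptotics, e.g.\ $V_\infty(t,z)\to0$ as $t\to-\infty$ while $V_\infty(0,\cdot)\ge\underline V_\infty(0,\cdot)>0$; it handles both of your cases at once, so the split by $d(\cdot,\mathcal R)$ is unnecessary. Drop the first route, though, because near a ridge only the shifts $\tau_i^k$ of the facets meeting there stay bounded, while for $n\ge3$ the others may tend to $+\infty$ (e.g.\ along a ridge as $t\to-\infty$ when a middle facet widens), so the continuity in Proposition~\ref{Proposition 3.6}, stated only for $\boldsymbol\tau\to\boldsymbol\tau_0\in\mathbb{R}^n$, does not apply and you would instead need convergence to a reduced front $V_I$ as in the stability proof.
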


\section{Stability of curved fronts}
\noindent
In this section, we study the stability of curved fronts in Theorem \ref{Theorem 2.1} and still consider $e_0 = (0, 0, \ldots, 1)$ for convenience. Firstly, we construct super- and subsolutions for Cauchy problem \eqref{2.5} (ignore initial condition).
\begin{lemma}\label{Lemma 4.1}
 For any $\beta \in\left(0, \beta^*\right]$ and any $0<\varepsilon<\varepsilon_0^{+}(\beta)$, there exist positive constants $\lambda(\beta)$ and $\varrho(\beta, \lambda)$ such that for each $\alpha>0$ small enough,
\begin{equation*}
	W_{\delta}^{+}(t, x, y):=\min\left\{\bar{V}(\varpi, x, y)+\delta e^{-\lambda t} \times\left[U^\beta(\eta(\varpi, x, y)) \omega(\eta(\varpi, x, y))+(1-\omega(\eta(\varpi, x, y)))\right], 1\right\}
\end{equation*}
	is a supersolution of \eqref{3.1} for $t \geq 0$ and $(x, y) \in \mathbb{R}^N$, for all $\delta \in\left(0,  \gamma_{\star} / 8\right]$, where $\varpi=\varpi(t):=t-\varrho \delta e^{-\lambda t}+\varrho \delta$, and $\beta^*$, $\varepsilon_0^{+}(\beta)$, $\bar{V}$ are given in Lemma \ref{Lemma 3.4}, and $\eta$, $\omega$, $\gamma_{\star}$ are given in \eqref{1.4}, \eqref{3.7} and \eqref{3.12}. 
\end{lemma}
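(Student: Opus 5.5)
We verify $\mathcal{L}W_\delta^+\ge0$ wherever $W_\delta^+<1$. Throughout, $\varpi(t)=t-\varrho\delta e^{-\lambda t}+\varrho\delta$ satisfies $\varpi'(t)=1+\lambda\varrho\delta e^{-\lambda t}>1$ and $\varpi(0)=0$. Write
\[
\Phi(\eta):=U^\beta(\eta)\omega(\eta)+(1-\omega(\eta)),
\]
and note that $\Phi$ is bounded above by $1$ and below by a positive constant on any half-line $\eta\le X$. A direct computation gives
\[
\mathcal{L}W^+_\delta=\varpi'(\partial_t\bar V)(\varpi,\cdot)-\Delta\bar V-f(W^+_\delta)-\lambda\delta e^{-\lambda t}\Phi+\delta e^{-\lambda t}\big(\varpi'\partial_t-\Delta\big)\Phi(\eta(\varpi,\cdot)).
\]
We rewrite the first two terms as $\mathcal{L}\bar V(\varpi,\cdot)+(\varpi'-1)\partial_t\bar V+f(\bar V)$, and use Lemma 3.4, which gives $\mathcal{L}\bar V\ge0$. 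The key extra positive contribution is $(\varpi'-1)\partial_t\bar V=\lambda\varrho\delta e^{-\lambda t}\,\partial_t\bar V$.

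The next step is to estimate $\partial_t\bar V$ from below. We have
\[
\partial_t U(\xi)=U'(\xi)\,\partial_t\xi,
\]
and by Lemma 3.3 $-\partial_t\xi\ge c_f+h/C_2-C_2\alpha h|\xi|$, so $\partial_t U(\xi)\ge |U'(\xi)|(c_f-C\alpha)$ on bounded $\xi$-ranges. The derivative of the $\varepsilon h$ term is $O(\varepsilon h\,\Phi)$ and nonnegative up to small errors, since $\partial_t h=2c_fh\alpha$ and $\eta_t<0$ give the right signs, exactly as in Case 3 of Lemma 3.4. We then split again into the three regions $\eta>X'$, $\eta<-X''$ and $\eta\in[-X'',X']$.

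\textbf{Region $\eta\in[-X'',X']$.} Here $\lambda\varrho\delta e^{-\lambda t}|U'(\xi)|c_f/2\ge\lambda\varrho\delta e^{-\lambda t}\kappa c_f/2$, which dominates $\lambda\delta e^{-\lambda t}$, the $O(\delta e^{-\lambda t})$ derivatives of $\Phi$ (bounded since $\omega',\omega''$ are bounded and $\eta$ is bounded), and $|f(\bar V)-f(W^+)|\le \|f'\|\delta e^{-\lambda t}$. This holds provided $\varrho$ is chosen large relative to $1/\kappa$ once $\lambda$ is fixed.

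\textbf{Region $\eta<-X''$.} Here $\Phi=1$, so the perturbation is the constant $\delta e^{-\lambda t}$, and $\bar V,W^+\in[1-2\gamma_\star,1+2\gamma_\star]$ because $\delta\le\gamma_\star/8$. By (1.4), $f(\bar V)-f(W^+)\ge-\tfrac12 f'(1)\delta e^{-\lambda t}$. Choosing $\lambda\le -f'(1)/4$ therefore gives positivity without using $\partial_t\bar V$.

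\textbf{Region $\eta>X'$.} Here $\Phi=U^\beta(\eta)$, and $f\equiv0$ once $W^+\le\theta$, guaranteed by taking $X'$ large and $\delta,\varepsilon$ small. We reuse the computation (3.21)--(3.26) with $\varepsilon h$ replaced by $\delta e^{-\lambda t}$; now $\nabla_x$ of the coefficient vanishes, and the time derivative gives $\varpi'$ times the $\eta_t$ terms. This yields
\[
(\varpi'\partial_t-\Delta)U^\beta(\eta)\ge \beta\tfrac{c_f^2}{4}U^\beta(\eta)-O(\alpha)U^\beta,
\]
since $\varpi'\ge1$ and $U'<0$ keep the sign of the $\eta_t$ term favourable. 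Hence this region requires $\lambda\le\beta c_f^2/8$.

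The main obstacle is the middle region. There the gain comes only from $\partial_t\bar V$, and we must make sure that the $O(\alpha h)$ errors in $\partial_t\xi$ do not destroy the lower bound $\partial_t\bar V\ge \kappa c_f/2$. This is why $\alpha$ must be small, and why the order of choices is: first $\lambda:=\min\{-f'(1)/4,\beta c_f^2/8\}$, then $\varrho$ large depending on $\kappa,\lambda$ and the bounds on $\omega$, and finally $\alpha$ small. Since $\bar V$ itself may equal $1$, in the region where $\bar V<1$ we argue as above, while the $\min\{\cdot,1\}$ is handled because $1$ is a solution.
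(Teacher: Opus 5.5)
Your decomposition into $\eta>X'$, $\eta<-X''$ and $-X''\le\eta\le X'$ is the paper's, and so is your order of constants and the mechanism in each region: the gain $\lambda\varrho\delta e^{-\lambda t}\partial_t\bar V\ge\frac12\lambda\varrho\delta e^{-\lambda t}\kappa c_f$ in the middle region, $f'\le\frac12 f'(1)$ near $1$ with $\lambda\le -f'(1)/4$, and the $U^\beta$-tail computation of Lemma~\ref{Lemma 3.4} near $0$. There is, however, a step in the two outer regions that does not hold as written.

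\textbf{The gap.} In both outer regions you drop $(\varpi'-1)\partial_t\bar V=\lambda\varrho\delta e^{-\lambda t}\partial_t\bar V$ as if it were nonnegative (``positivity without using $\partial_t\bar V$''). Lemma~\ref{Lemma 3.3} only gives
\[
\partial_t\bar V(\varpi,\cdot)\ \ge\ |U'(\xi)|\bigl(c_f-C_2\alpha h|\xi|\bigr),
\]
since the $\varepsilon$-part is nonnegative. For $|\xi|\gtrsim 1/\alpha$ this bound has no sign. You restricted your lower bound to bounded $\xi$-ranges, but the outer regions are unbounded in $\xi$.

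You therefore have to absorb the error $-\lambda\varrho\delta e^{-\lambda t}C_2\alpha h|\xi U'(\xi)|$, and its coefficient contains the large constant $\varrho$.
\begin{itemize}
\item For $\eta<-X''$ (so $\xi<0$), use $|\xi U'(\xi)|\le L_3|\xi|e^{\beta_0\xi}\le L_3/(e\beta_0)$ against the margin $-\frac14 f'(1)\delta e^{-\lambda t}$.
\item For $\eta>X'$ (so $\xi>0$), use $|\xi U'(\xi)|\le L_2\,\xi e^{-c_f\xi}$ against the margin $\delta e^{-\lambda t}\bigl(\frac{\beta c_f^2}{8}-O(\alpha)\bigr)U^\beta(\eta)$. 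This needs $U^\beta(\eta)\ge (L_1/2)^\beta e^{-c_f\xi/4}$, which is exactly where $\beta\le\beta^*$ (the $\beta_2^*$ condition) and $\eta\le\xi\sqrt{1+(C_1+\max_i|\nu_i\cot\theta_i|)^2}$ enter. The ratio $\xi e^{-c_f\xi}/e^{-c_f\xi/4}$ is then bounded.
\end{itemize}
Both requirements are of the form $\alpha\varrho\ll1$. They are precisely the paper's conditions $\alpha<\beta_0 e/(\varrho C_2L_3)$ and $\alpha<3L_1^\beta e/(4\varrho C_2L_2)$. Your ordering (first $\lambda$, then $\varrho$, then $\alpha$) accommodates them, so the fix is available, but the step has to be carried out.

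\textbf{A related point in the middle region.} The time derivative of $\Phi(\eta(\varpi,\cdot))$ carries the factor $\varpi'=1+\lambda\varrho\delta e^{-\lambda t}$. It is therefore not $O(\delta e^{-\lambda t})$ uniformly in $\varrho$, since $\delta\le\gamma_\star/8$ is not small relative to $\kappa c_f$. It must be discarded by sign, using $\Phi'\le0$ and $\eta_t<0$, as the paper does. Only the spatial derivatives of $\Phi$ go into the constant $C^*(\beta)$ that $\varrho$ must dominate; otherwise the choice of $\varrho$ becomes circular.
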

\begin{proof}
To show that $W_{\delta}^{+}(t, x, y)$ is a supersolution, we only to prove the domain where $W_{\delta}^{+}(t, x, y)<1$. Let $X^{\prime}>1$ and $X^{\prime \prime}>1$ be given in Lemma \ref{Lemma 3.4}, then consider the inequality
\begin{equation*}
\mathcal{L} W_{\delta}^{+}:=\partial_t	W_{\delta}^{+}-\Delta_{x, y} W_{\delta}^{+}-f\left(W_{\delta}^{+}\right) \geq 0, \quad \forall(t, x, y) \in[0,+\infty) \times \mathbb{R}^N,
\end{equation*}
in three cases $\eta(\varpi, x, y)>X^{\prime}$, $\eta(\varpi, x, y)<-X^{\prime \prime}$, and $\eta(\varpi, x, y) \in\left[-X^{\prime \prime}, X^{\prime}\right]$, respectively. Since $\bar{V}$ is a supersolution of \eqref{3.1} from Lemma \ref{Lemma 3.4}, one has
\begin{equation}\label{4.1}
\begin{aligned}
	\mathcal{L} W_{\delta}^{+} \geq & \varrho \delta \lambda e^{-\lambda t} \bar{V}_\varpi +f\left(\bar{V}\right)-f\left(W_{\delta}^{+}\right)+\left(\partial_t-\Delta_{x, y}\right)\left(\delta e^{-\lambda t} \times\left[U^\beta(\eta) \omega(\eta)+(1-\omega(\eta))\right]\right)
\end{aligned}
\end{equation}
in $\mathbb{R} \times \mathbb{R}^N$, where $\xi$, $\eta$, $\bar{V}$ and all of its derivatives are evaluated at $(\varpi(t), x, y)$. Take 
\begin{equation*}
	 0< \lambda <\min\left\{-\frac{f^{\prime}(1)}{4}, \frac{\beta c_{f}^{2}}{16 }\right\}\quad \text{and} \quad \varrho= 3 \frac{\left\|f_u\right\|_{L^{\infty}([0, 1])}+\lambda+C^{*}(\beta)}{\lambda \kappa c_{f}},
\end{equation*} 
where $C^{*}(\beta)>0$ is to be determined. Choose $\alpha>0$ small enough such that
\begin{equation*}
	0<\alpha<\min\left\{\frac{\beta_{0} e}{\varrho C_{2} L_{3}}, \frac{ 3 L_{1}^{\beta} e}{4 \varrho C_{2} L_{2}}, \alpha_0^{+}(\beta, \varepsilon)\right\},
\end{equation*}
where $\alpha_0^{+}(\beta, \varepsilon)$ is given in Lemma \ref{Lemma 3.4}.

\textbf{Case 1:} $\eta(\varpi(t), x, y)>X^{\prime}$ and $t \geq 0$.
In this case, $\omega(\eta) \equiv 1$. Hence,
\begin{equation*}
	\mathcal{L} W_{\delta}^{+} \geq \varrho \delta \lambda e^{-\lambda t} \bar{V}_\varpi +\left(\partial_t-\Delta_{x, y}\right)\left(\delta e^{-\lambda t} U^\beta(\eta)\right)+f\left(\bar{V}\right)-f\left(W_{\delta}^{+}\right).
\end{equation*}
It can be calculated that
	\begin{align}\label{4.2}		
		&\left(\partial_t-\Delta_{x, y}\right)\left(\delta e^{-\lambda t} U^\beta(\eta) \right) \nonumber\\
		&= -\lambda\delta e^{- \lambda t}U^\beta(\eta)+\delta\beta e^{- \lambda t} U^{\beta-1}(\eta) U^{\prime}(\eta)\partial_{\varpi}\eta (1+\varrho \delta \lambda e^{- \lambda t}) \nonumber\\
		&\quad-\delta\beta(\beta-1) e^{- \lambda t} U^{\beta-2}(\eta) (U^{\prime} (\eta))^2\left(\sum_{k=1}^{N-1}(\partial_{x_k}\eta)^2 + (\partial_{y}\eta)^2\right)\nonumber\\
		&\quad-\delta\beta e^{- \lambda t} U^{\beta-1}(\eta) U^{\prime \prime}(\eta) \left(\sum_{k=1}^{N-1}(\partial_{x_k}\eta)^2 + (\partial_{y}\eta)^2\right)-\delta\beta e^{- \lambda t} U^{\beta-1}(\eta) U^{\prime}(\eta)\left(\sum_{k=1}^{N-1}\partial_{x_k x_k}\eta + \partial_{y y}\eta\right)\nonumber\\
		&\geq -\lambda\delta e^{- \lambda t}U^\beta(\eta) +\delta\beta e^{- \lambda t} U^{\beta-1}(\eta) U^{\prime}(\eta)(-\partial_{\alpha \varpi}\varphi(\alpha \varpi, \alpha x, \alpha)) (1+\varrho \delta \lambda e^{- \lambda t}) \nonumber\\
		&\quad -\delta\beta e^{- \lambda t}  U^{\beta}(\eta)\left[\beta(\beta-1)\frac{(U^{\prime}(\eta))^{2}}{U^{2}(\eta)}\left(\sum_{k=1}^{N-1}(-\partial_{\zeta_k}\varphi(\alpha \varpi, \alpha x, \alpha))^2+1\right)\right.\nonumber\\
		&\quad \left. +\frac{U^{\prime \prime}(\eta)}{U(\eta)}\left(\sum_{k=1}^{N-1}(-\partial_{\zeta_k}\varphi(\alpha \varpi, \alpha x, \alpha))^2+1\right)-\frac{U^{\prime}(\eta)}{U(\eta)}\left(\alpha \sum_{k=1}^{N-1}\partial_{\zeta_k \zeta_k}\varphi(\alpha \varpi, \alpha x, \alpha)\right)\right]\nonumber\\
		&\geq -\lambda\delta e^{- \lambda t}U^\beta(\eta) +\delta\beta e^{- \lambda t}  U^{\beta}(\eta) \frac{U^{\prime}(\eta)}{U(\eta)}\left(\alpha \sum_{k=1}^{N-1}\partial_{\zeta_k \zeta_k}\varphi(\alpha \varpi, \alpha x, \alpha)\right) \nonumber\\
		&\quad -\delta\beta e^{- \lambda t}  U^{\beta}(\eta)\left[\beta(\beta-1)\frac{(U^{\prime}(\eta))^{2}}{U^{2}(\eta)}\left(\sum_{k=1}^{N-1}(-\partial_{\zeta_k}\varphi(\alpha \varpi, \alpha x, \alpha))^2+1\right)\right.\nonumber\\
		&\quad \left. +\frac{U^{\prime \prime}(\eta)}{U(\eta)}\left(\sum_{k=1}^{N-1}(-\partial_{\zeta_k}\varphi(\alpha \varpi, \alpha x, \alpha))^2+1\right) + \frac{U^{\prime}(\eta)}{U(\eta)}\partial_{\alpha \varpi}\varphi(\alpha \varpi, \alpha x, \alpha) \right]\nonumber\\
		&=: -\lambda\delta e^{- \lambda t}U^\beta(\eta) + Q_1 +Q_2.
	\end{align}
From Lemma \ref{Lemma 3.4}, recall that
\begin{equation*}
	\beta^{*}\leq \frac{1}{4 \left((C_{1}+\max_{1\le i\le n} \{|\nu_i \cot \theta_i|\})^2+1\right)}.
\end{equation*}
Similar to the proof of Case $1$ in Lemma \ref{Lemma 3.4}, for any $\beta \in\left(0, \beta^{*}\right]$, one has
\begin{equation}\label{4.3}
	\begin{aligned}
		Q_2\geq\frac{c_{f}^{2}}{4}\delta\beta e^{- \lambda t}  U^{\beta}(\eta) \quad\text{for all  } \eta \in\left(X^{\prime}, +\infty\right) \text{ and } t \geq 0.
	\end{aligned}
\end{equation}
 It follows from \eqref{4.2} and \eqref{4.3} that for arbitrary $0<\alpha < \alpha_0^{+}(\beta, \varepsilon) \leq \alpha_1^{+}(\beta)$ ($\alpha_0^{+}(\beta, \varepsilon),\alpha_1^{+}(\beta)$ are given in Lemma \ref{Lemma 3.4}),
\begin{equation}\label{4.4}
Q_1+Q_2>\delta e^{- \lambda t} U^{\beta}(\eta)\times \beta\frac{c_{f}^{2}}{8}\quad\text{for all  } \eta \in\left(X^{\prime}, +\infty\right) \text{ and } t \geq 0.
\end{equation}
Then it follows from \eqref{4.1}-\eqref{4.4} that
\begin{equation*}
	\mathcal{L} W_{\delta}^{+} \geq \varrho \delta \lambda e^{-\lambda t} \bar{V}_\varpi -\lambda\delta e^{- \lambda t}U^{\beta}(\eta)+\delta e^{- \lambda t} U^{\beta}(\eta)\times \beta\frac{c_{f}^{2}}{8 }+f\left(\bar{V}\right)-f\left(W_{\delta}^{+}\right).
\end{equation*}
By \eqref{1.2} and definitions of $\bar{V}$ and $W_{\delta}^{+}$, we know $W_{\delta}^{+}, \bar{V} \leq 3 \gamma_{\star}$ for $\eta> X^{\prime}$ and $t\geq 0$. Thus
\begin{equation*}
	f\left(\bar{V}\right)-f\left(W_{\delta}^{+}\right)= 0,\quad\forall\eta \in\left(X^{\prime},+\infty\right) \text{ and } t \geq 0.
\end{equation*}
It follows from \eqref{3.13} and Lemma \ref{Lemma 3.3} that 
\begin{equation*}
	\begin{aligned}
		\frac{\partial}{\partial \varpi}\left(\bar{V}(\varpi, x, y)\right) & 
		 = -U^{\prime}(\xi) \left(\frac{\partial_t \varphi}{\sqrt{1+|\nabla \varphi|^2}}+\alpha \xi \frac{\nabla \varphi \cdot \nabla \partial_t \varphi}{1+|\nabla \varphi|^2}\right)\\
		 &\geq - c_{f} U^{\prime}(\xi) + C_{2} \alpha h(\alpha t, \alpha x, \alpha) U^{\prime}(\xi) \left|\xi\right|,
	\end{aligned}
\end{equation*}
where $\xi$ is evaluated at $(\varpi, x, y)$. Thus we have
\begin{equation*}
	\mathcal{L} W_{\delta}^{+} \geq  \delta e^{-\lambda t} \left(- \varrho \lambda  c_{f} U^{\prime}(\xi) + \varrho \lambda  C_{2} \alpha h(\alpha t, \alpha x, \alpha) U^{\prime}(\xi) \left|\xi\right| -\lambda U^{\beta}(\eta)+ U^{\beta}(\eta)\times \beta\frac{c_{f}^{2}}{8 }\right).
\end{equation*}

Since $0<\beta\leq \beta^{*}$, it holds that 
\begin{equation*}
0< \beta \eta\leq\frac{\eta}{4 \sqrt{(C_1 +\max_{1\le i\le n} \{|\nu_i \cot \theta_i|\})^2+1}}\leq \frac{y-\frac{1}{\alpha} \varphi(\alpha t, \alpha x, \alpha)}{4 \sqrt{1+|\nabla \varphi(\alpha t, \alpha x, \alpha)|^2}}=\frac{1}{4}\xi.
\end{equation*}
By \eqref{1.5}, $0<\lambda<\frac{\beta c_{f}^{2}}{16 }$ and the fact $h=\sum_{i, j \in\{1, \ldots, n\} ; i\neq j}e^{-\left(\hat{q}_i+\hat{q}_j\right)}\le 1$, we obtain
\begin{equation*}
	\begin{aligned}
		\mathcal{L} W_{\delta}^{+} &\geq  \delta e^{-\lambda t} \left(  \frac{\beta c_{f}^{2}}{16 } \varrho C_{2} \alpha h(\alpha t, \alpha x, \alpha) U^{\prime}(\xi) \left|\xi\right|+ U^{\beta}(\eta)\times \frac{\beta c_{f}^{2}}{16 }\right)\\
		&\geq  \frac{\beta c_{f}^{2}}{16 } \delta e^{-\lambda t} \left(  \varrho C_{2} \alpha U^{\prime}(\xi) \left|\xi\right|+ U^{\beta}(\eta)\right)\\
		&\geq  \frac{\beta c_{f}^{2}}{16 } \delta e^{-\lambda t} \left(  \varrho C_{2} \alpha \left(-L_{2} e^{- c_{f} \xi}\right) \left|\xi\right|+ L_{1}^{\beta} e^{- \beta c_{f} \eta}\right)\\
		&\geq  \frac{\beta c_{f}^{2}}{16 } \delta e^{-\lambda t} \left( - \varrho L_{2} C_{2} \alpha e^{- c_{f} \xi} \left|\xi\right|+ L_{1}^{\beta} e^{- \frac{c_{f}}{4}\xi  }\right)
	\end{aligned}
\end{equation*}
It follows from $0<\alpha< \frac{ 3 L_{1}^{\beta} e}{4 \varrho C_{2} L_{2}}$ that $L_{1}^{\beta} e^{- \frac{c_{f}}{4}\xi  }> \varrho L_{2} C_{2} \alpha e^{- c_{f} \xi} \left|\xi\right|$ for $\eta> X^{\prime}$ and $t\geq 0$. Then one knows $\mathcal{L} W_{\delta}^{+} >0$ in Case $1$.

\textbf{Case 2:} $\eta(\varpi(t), x, y)<-X^{\prime\prime}$ and $t \geq 0$.
In this case, $\omega(\eta) \equiv 0$. Thus one has
\begin{equation*}
	\mathcal{L} W_{\delta}^{+} \geq \varrho \delta \lambda e^{-\lambda t} \bar{V}_\varpi -\lambda\delta e^{- \lambda t} +f\left(\bar{V}\right)-f\left(W_{\delta}^{+}\right).
\end{equation*}
Since $\delta \in\left(0,  \gamma_{\star} / 8\right]$ and $0<\varepsilon<\gamma_{\star} / 6$, by definitions of $\bar{V}$ and $W_{\delta}^{+}$, one has $W_{\delta}^{+}, \bar{V} \in [1-2\gamma_{\star},1+2\gamma_{\star}]$ for $\eta<-X^{\prime\prime}$ and $t \geq 0$. 
Then we obtain
\begin{equation*}
	f\left(\bar{V}\right)-f\left(W_{\delta}^{+}\right) \geq -\frac{f^{\prime}(1)}{2} \delta  e^{-\lambda t},\quad\forall\eta \in\left(-\infty, -X^{\prime \prime}\right) \text{ and } t \geq 0.
\end{equation*}
It follows from \eqref{3.13} and \eqref{3.20} that 
\begin{equation*}
	\begin{aligned}
		\frac{\partial}{\partial \varpi}\left(\bar{V}(\varpi, x, y)\right) & = U^{\prime}(\xi) \xi_\varpi + \varepsilon \frac{\partial}{\partial \varpi}\left[h(\alpha \varpi, \alpha x, \alpha) \times U^\beta(\eta) \right]\\
		& = -U^{\prime}(\xi) \left(\frac{\partial_t \varphi}{\sqrt{1+|\nabla \varphi|^2}}+\alpha \xi \frac{\nabla \varphi \cdot \nabla \partial_t \varphi}{1+|\nabla \varphi|^2}\right)\\
		&\quad + \varepsilon \alpha \frac{\partial h(\alpha \varpi, \alpha x, \alpha)}{\partial \alpha \varpi} \times U^\beta(\eta) +\varepsilon h(\alpha \varpi, \alpha x, \alpha) \times\frac{\partial}{\partial \varpi}\left(U^\beta(\eta) \right),
	\end{aligned}
\end{equation*}
where $\xi$ and $\eta$ are evaluated at $(\varpi, x, y)$. Then we can get
\begin{equation*}
	\varepsilon \alpha \frac{\partial h(\alpha \varpi, \alpha x, \alpha)}{\partial \alpha  \varpi} \times U^\beta(\eta) =2 \varepsilon \alpha  c_{f} h(\alpha \varpi, \alpha x, \alpha) U^\beta(\eta) \geq 0,
\end{equation*}
\begin{equation*}
	\begin{aligned}
		\varepsilon h(\alpha \varpi, \alpha x, \alpha) \times\frac{\partial}{\partial \varpi}\left(U^\beta(\eta)\right)= \varepsilon \beta h(\alpha \varpi, \alpha x, \alpha) U^{\beta-1}(\eta) \partial_\eta U(\eta) \partial_\varpi \eta\geq 0,
	\end{aligned}
\end{equation*}
and
\begin{equation*}
	\begin{aligned}
		&-U^{\prime}(\xi) \left(\frac{\partial_t \varphi}{\sqrt{1+|\nabla \varphi|^2}}+\alpha \xi \frac{\nabla \varphi \cdot \nabla \partial_t \varphi}{1+|\nabla \varphi|^2}\right)\geq - c_{f} U^{\prime}(\xi) - C_{2} \alpha h(\alpha t, \alpha x, \alpha) \left|U^{\prime}(\xi) \xi\right|.
	\end{aligned}
\end{equation*}
We know 
\begin{equation*}
	\mathcal{L} W_{\delta}^{+} \geq \varrho \delta \lambda e^{-\lambda t} \left(- c_{f} U^{\prime}(\xi) - C_{2} \alpha h(\alpha t, \alpha x, \alpha) \left|U^{\prime}(\xi) \xi\right|\right) -\lambda\delta e^{- \lambda t}-\frac{f^{\prime}(1)}{2} \delta  e^{-\lambda t}.
\end{equation*}
By \eqref{1.5}, $0<\lambda<-\frac{f^{\prime}(1)}{4}$, $0<\beta\leq \beta^{*}$ and the fact $h=\sum_{i, j \in\{1, \ldots, n\} ; i\neq j}e^{-\left(\hat{q}_i+\hat{q}_j\right)}\leq 1$, one has
\begin{equation*}
	\mathcal{L} W_{\delta}^{+} \geq -\frac{f^{\prime}(1)}{4} \delta e^{-\lambda t} \left(-\varrho C_{2} \alpha \left|U^{\prime}(\xi) \xi\right|+1\right)\geq -\frac{f^{\prime}(1)}{4} \delta e^{-\lambda t} \left(-\varrho C_{2} \alpha L_3 e^{\beta_{0} \xi} \left|\xi\right|+1\right) .
\end{equation*}
Since $0<\alpha<\frac{\beta_{0} e}{\varrho C_{2} L_{3}}$, we get $1>\varrho C_{2} \alpha L_3 e^{\beta_{0} \xi} \left|\xi\right|$ for $\eta<-X^{\prime\prime}$ and $t \geq 0$. Then we know $\mathcal{L} W_{\delta}^{+} >0$ in Case $2$.

\textbf{Case 3:} $-X^{\prime\prime}\leq \eta(\varpi(t), x, y)\leq X^{\prime}$ and $t \geq 0$.
	\begin{align*}
	&\left(\partial_t-\Delta_{x, y}\right)\left(\delta e^{-\lambda t} \times\left[U^\beta(\eta) \omega(\eta)+(1-\omega(\eta))\right]\right) \\
	&\geq-\delta \lambda e^{- \lambda t}+\delta \beta e^{- \lambda t} U^{\beta-1}(\eta) U^{\prime}(\eta)\omega(\eta) (-\partial_{\alpha \varpi}\varphi(\alpha \varpi, \alpha x, \alpha)) (1+\varrho \delta \lambda e^{- \lambda t})\\ 
	&\quad+\delta \beta e^{- \lambda t} U^{\beta}(\eta) \omega^{\prime}(\eta) \partial_{\varpi} \eta (1+\varrho \delta \lambda e^{- \lambda t}) + \delta \beta e^{- \lambda t} (-\omega^{\prime}(\eta)) \partial_{\varpi} \eta (1+\varrho \delta \lambda e^{- \lambda t}) \\
	&\quad-\delta \beta(\beta-1) e^{- \lambda t} U^{\beta-2}(\eta)(U^{\prime}(\eta))^2 \omega(\eta)\left(\sum_{k=1}^{N-1}(\partial_{x_k}\eta)^2 + (\partial_{y}\eta)^2\right) \\
	&\quad-\delta \beta e^{- \lambda t} U^{\beta-1}(\eta) U^{\prime \prime}(\eta) \omega(\eta) \left(\sum_{k=1}^{N-1}(\partial_{x_k}\eta)^2 + (\partial_{y}\eta)^2\right)\\
	&\quad-\delta\beta  e^{- \lambda t} U^{\beta-1}(\eta) U^{\prime}(\eta) \omega(\eta) \left(\sum_{k=1}^{N-1}\partial_{x_k x_k}\eta + \partial_{y y}\eta\right)\\
	&\quad-2\delta \beta e^{- \lambda t}U^{\beta-1}(\eta) U^{\prime}(\eta) \omega^{\prime}(\eta) \left[\sum_{k=1}^{N-1}(\partial_{x_k} \eta)^2+(\partial_{y} \eta)^2\right]- \delta e^{- \lambda t} U^{\beta}(\eta) \omega^{\prime \prime}(\eta) \left[\sum_{k=1}^{N-1}(\partial_{x_k} \eta)^2+(\partial_{y} \eta)^2\right]\\
	&\quad-\delta e^{- \lambda t} U^{\beta}(\eta) \omega^{\prime}(\eta) \left[\sum_{k=1}^{N-1}\partial_{x_k x_k}\eta+\partial_{y y}\eta\right]+\delta e^{- \lambda t} \omega^{\prime \prime}(\eta) \left[\sum_{k=1}^{N-1}(\partial_{x_k} \eta)^2+(\partial_{y} \eta)^2\right]\\
	&\quad+\delta e^{- \lambda t}\omega^{\prime}(\eta) \left[\sum_{k=1}^{N-1}\partial_{x_k x_k}\eta+\partial_{y y}\eta\right].
	\end{align*}
By calculation, we can obtain
\[
\begin{aligned}
	\delta \beta e^{- \lambda t} U^{\beta-1}(\eta) U^{\prime}(\eta)\omega(\eta) (-\partial_{\alpha \varpi}\varphi(\alpha \varpi, \alpha x, \alpha)) (1+\varrho \delta \lambda e^{- \lambda t})\geq 0,
\end{aligned}
\]
\[
\begin{aligned}
\delta \beta e^{- \lambda t} (U^{\beta}(\eta)-1) \omega^{\prime}(\eta) \partial_{\varpi} \eta (1+\varrho \delta \lambda e^{- \lambda t})\geq \delta \beta e^{- \lambda t} (1-U^{\beta}(\eta)) \omega^{\prime}(\eta) c_{f} (1+\varrho \delta \lambda e^{- \lambda t})\geq 0 ,
\end{aligned}
\]
\[
\begin{aligned}
	&-\delta \beta(\beta-1) e^{- \lambda t} U^{\beta-2}(\eta)(U^{\prime}(\eta))^2 \omega(\eta)\left(\sum_{k=1}^{N-1}(\partial_{x_k}\eta)^2 + (\partial_{y}\eta)^2\right)\\
	&\quad\geq-\delta \beta  e^{- \lambda t} \left(\max\{L_2 e^{-c_{f} |\eta|}, L_3 e^{-\beta_{0} |\eta|}\}\right)^2 \left[1+(C_1 h(\alpha \varpi, \alpha x, \alpha)+\max_{1\le i\le n} \{|\nu_i \cot \theta_i|\})^2\right],
\end{aligned}
\]
\[
\begin{aligned}
	&-\delta \beta e^{- \lambda t} U^{\beta-1}(\eta) U^{\prime \prime}(\eta) \omega(\eta) \left(\sum_{k=1}^{N-1}(\partial_{x_k}\eta)^2 + (\partial_{y}\eta)^2\right)\\
	&\quad\geq-\delta \beta  e^{- \lambda t}\max\{L_2 e^{-c_{f} |\eta|}, L_3 e^{-\beta_{0} |\eta|}\}\left[1+(C_1 h(\alpha \varpi, \alpha x, \alpha)+\max_{1\le i\le n} \{|\nu_i \cot \theta_i|\})^2\right],
\end{aligned}
\]
\[
\begin{aligned}
	&-\delta\beta  e^{- \lambda t} U^{\beta-1}(\eta) U^{\prime}(\eta) \omega(\eta) \left(\sum_{k=1}^{N-1}\partial_{x_k x_k}\eta + \partial_{y y}\eta\right)\\
	&\quad\geq-\delta \alpha\beta (N-1) C_{1} e^{- \lambda t}  h(\alpha \varpi, \alpha x, \alpha) \max\{L_2 e^{-c_{f} |\eta|}, L_3 e^{-\beta_{0} |\eta|}\},
\end{aligned}
\]
\[
\begin{aligned}
	&-2\delta \beta e^{- \lambda t}U^{\beta-1}(\eta) U^{\prime}(\eta) \omega^{\prime}(\eta) \left[\sum_{k=1}^{N-1}(\partial_{x_k} \eta)^2+(\partial_{y} \eta)^2\right]\\
	&\quad\geq -2\delta \beta e^{- \lambda t}\omega^{\prime}  \max\{L_2 e^{-c_{f} |\eta|}, L_3 e^{-\beta_{0} |\eta|}\} \left[1+(C_1 h(\alpha \varpi, \alpha x, \alpha)+\max_{1\le i\le n} \{|\nu_i \cot \theta_i|\})^2\right],
\end{aligned}
\]
\[
\begin{aligned}
- \delta e^{- \lambda t} U^{\beta}(\eta) \omega^{\prime \prime}(\eta) \left[\sum_{k=1}^{N-1}(\partial_{x_k} \eta)^2+(\partial_{y} \eta)^2\right]\geq - \delta e^{- \lambda t} |\omega^{\prime \prime}| \left[1+(C_1 h(\alpha \varpi, \alpha x, \alpha)+\max_{1\le i\le n} \{|\nu_i \cot \theta_i|\})^2\right],
\end{aligned}
\]
\[
\begin{aligned}
-\delta e^{- \lambda t} U^{\beta}(\eta) \omega^{\prime}(\eta) \left[\sum_{k=1}^{N-1}\partial_{x_k x_k}\eta+\partial_{y y}\eta\right]\geq - \delta\alpha C_1 (N-1) e^{- \lambda t} \omega^{\prime} h(\alpha \varpi, \alpha x, \alpha),
\end{aligned}
\]
\[
\begin{aligned}
	\delta e^{- \lambda t} \omega^{\prime \prime}(\eta) \left[\sum_{k=1}^{N-1}(\partial_{x_k} \eta)^2+(\partial_{y} \eta)^2\right] \geq -\delta e^{- \lambda t} |\omega^{\prime \prime}| \left[1+(C_1 h(\alpha \varpi, \alpha x, \alpha)+\max_{1\le i\le n} \{|\nu_i \cot \theta_i|\})^2\right]
\end{aligned}
\]
and
\[
\begin{aligned}
	\delta e^{- \lambda t}\omega^{\prime}(\eta) \left[\sum_{k=1}^{N-1}\partial_{x_k x_k}\eta+\partial_{y y}\eta\right] \geq - \delta\alpha C_1 (N-1) e^{- \lambda t} \omega^{\prime} h(\alpha \varpi, \alpha x, \alpha). 
\end{aligned}
\]
Thus there exists a constant $C^{*}(\beta)>0$ such that 
\begin{equation*}
	\left(\partial_t-\Delta_{x, y}\right)\left(\delta e^{-\lambda t} \times\left[U^\beta(\eta) \omega(\eta)+(1-\omega(\eta))\right]\right)\geq -\delta \lambda e^{- \lambda t}-\delta C^{*}(\beta) e^{- \lambda t}
\end{equation*}
and one has
\begin{equation}\label{4.5}
	\mathcal{L} W_{\delta}^{+} \geq \varrho \delta \lambda e^{-\lambda t} \bar{V}_\varpi -\delta\lambda e^{- \lambda t}-\delta C^{*}(\beta) e^{- \lambda t} +f\left(\bar{V}\right)-f\left(W_{\delta}^{+}\right).
\end{equation}
It follows from \eqref{3.13} and \eqref{3.20} that 
\begin{equation}\label{4.6}
\begin{aligned}
	\frac{\partial}{\partial \varpi}\left(\bar{V}(\varpi, x, y)\right) & = U^{\prime}(\xi) \xi_\varpi + \varepsilon \frac{\partial}{\partial \varpi}\left[h(\alpha \varpi, \alpha x, \alpha) \times\left(U^\beta(\eta) \omega(\eta)+(1-\omega(\eta))\right)\right]\\
	& = -U^{\prime}(\xi) \left(\frac{\partial_t \varphi}{\sqrt{1+|\nabla \varphi|^2}}+\alpha \xi \frac{\nabla \varphi \cdot \nabla \partial_t \varphi}{1+|\nabla \varphi|^2}\right)\\
	&\quad + \varepsilon \alpha \frac{\partial h(\alpha \varpi, \alpha x, \alpha)}{\partial \alpha \varpi} \times\left(U^\beta(\eta) \omega(\eta)+(1-\omega(\eta))\right)\\
	&\quad+\varepsilon h(\alpha \varpi, \alpha x, \alpha) \times\frac{\partial}{\partial \varpi}\left[\left(U^\beta(\eta) \omega(\eta)+(1-\omega(\eta))\right)\right],
\end{aligned}
\end{equation}
where $\xi$ and $\eta$ are evaluated at $(\tau, x, y)$. Furthermore, we obtain
\begin{equation*}
	\varepsilon \alpha \frac{\partial h(\alpha \varpi, \alpha x, \alpha)}{\partial \alpha  \varpi} \times\left(U^\beta(\eta) \omega(\eta)+(1-\omega(\eta))\right)=2 \varepsilon \alpha  c_{f} h(\alpha \varpi, \alpha x, \alpha)\left(U^\beta(\eta) \omega(\eta)+(1-\omega(\eta))\right)\geq 0,
\end{equation*}
\begin{equation*}
\begin{aligned}
	&\varepsilon h(\alpha \varpi, \alpha x, \alpha) \times\frac{\partial}{\partial \varpi}\left[\left(U^\beta(\eta) \omega(\eta)+(1-\omega(\eta))\right)\right]\\
	&=\varepsilon h(\alpha \varpi, \alpha x, \alpha) \times\frac{\partial}{\partial \varpi}\left[\left(U^\beta(\eta) -1\right) \omega(\eta)\right]\\
	&=\varepsilon h(\alpha \varpi, \alpha x, \alpha)\left(U^\beta(\eta) -1\right)\omega^{\prime}(\eta)\partial_\varpi \eta + \varepsilon \beta h(\alpha \varpi, \alpha x, \alpha) \omega(\eta) U^{\beta-1}(\eta) \partial_\eta U(\eta) \partial_\varpi \eta\geq 0,
\end{aligned}
\end{equation*}
and
\begin{equation*}
	\begin{aligned}
		&-U^{\prime}(\xi) \left(\frac{\partial_t \varphi}{\sqrt{1+|\nabla \varphi|^2}}+\alpha \xi \frac{\nabla \varphi \cdot \nabla \partial_t \varphi}{1+|\nabla \varphi|^2}\right)\geq - c_{f} U^{\prime}(\xi)  -\alpha \xi \frac{\nabla \varphi \cdot \nabla \partial_t \varphi}{1+|\nabla \varphi|^2}U^{\prime}(\xi).
	\end{aligned}
\end{equation*}
Since $-X^{\prime\prime}\leq \eta(\varpi(t), x, y)\leq X^{\prime}$, we know $\xi(\varpi(t), x, y)$ is bounded and $U^{\prime}(\xi(\varpi(t), x, y)) \xi(\varpi(t), x, y)$ is bounded. Assume there is a constant $M_{1}>0$ such that $|U^{\prime}(\xi(\varpi(t), x, y)) \xi(\varpi(t), x, y)|\leq M_{1}$.
According to \eqref{4.6} and Lemma \ref{Lemma 3.3}, for any $0<\alpha<\alpha_0^{+}\leq \frac{\kappa c_{f}}{2 M_{1} C_{2}}$, it holds that
\begin{equation}\label{4.7}
	\frac{\partial}{\partial \varpi}\left(\bar{V}(\varpi, x, y)\right)\geq -c_{f} U^{\prime}(\xi) -\alpha M_{1} C_{2}h(\alpha \varpi, \alpha x, \alpha)\geq c_{f} \min_{\xi\in[-X^{\prime\prime}, X^{\prime}]}\left|U^{\prime}(\xi)\right| -\alpha M_{1} C_{2}\geq \frac{1}{2} \kappa c_{f}.
\end{equation}
By \eqref{4.5} and \eqref{4.7}, we know
\begin{equation*}
  \begin{aligned}
	\mathcal{L} W_{\delta}^{+} &\geq \frac{1}{2} \kappa c_{f} \varrho \delta \lambda e^{-\lambda t}  -\delta\lambda e^{- \lambda t}-\delta C^{*}(\beta) e^{- \lambda t} +f\left(\bar{V}\right)-f\left(W_{\delta}^{+}\right)\\
	&\geq \frac{1}{2} \kappa c_{f} \varrho \delta \lambda e^{-\lambda t}  -\delta\lambda e^{- \lambda t}-\delta C^{*}(\beta) e^{- \lambda t} -\left\|f_u\right\|_{L^{\infty}([0, 1])}\delta e^{- \lambda t}\\
	&\geq \delta e^{- \lambda t}\left(\frac{1}{2} \kappa c_{f} \varrho \lambda- \lambda - C^{*}(\beta)-\left\|f_u\right\|_{L^{\infty}([0, 1])}\right)
  \end{aligned}
\end{equation*}	
 in $\left\{(t, x, y): \eta(\varpi(t), x, y) \in\left[-X^{\prime \prime}, X^{\prime}\right], t \geq 0\right\}$. Since 
 \begin{equation*}
 	 \varrho= 3 \frac{\left\|f_u\right\|_{L^{\infty}([0, 1])}+\lambda+C^{*}(\beta)}{\lambda \kappa c_{f}},
 \end{equation*}
 we prove that $\mathcal{L} W_{\delta}^{+}>0$ in Case $3$. 
 
 In conclusion, by setting $\alpha\in\left(0, \min\left\{\frac{\beta_{0} e}{\varrho C_{2} L_{3}}, \frac{ 3 L_{1}^{\beta} e}{4 \varrho C_{2} L_{2}}, \alpha_0^{+}(\beta, \varepsilon)\right\}\right)$ small enough, 
 \begin{equation*}
 	0< \lambda <\min\left\{-\frac{f^{\prime}(1)}{4}, \frac{\beta c_{f}^{2}}{16 }\right\}\quad \text{and} \quad \varrho= 3 \frac{\left\|f_u\right\|_{L^{\infty}([0, 1])}+\lambda+C^{*}(\beta)}{\lambda \kappa c_{f}},
 \end{equation*}
 the proof of Lemma \ref{Lemma 4.1} is completed. 
\end{proof}

\begin{proposition}\label{Proposition 3.4}
	Suppose that $(F1)$ and $(F2)$ hold. For the Cauchy problem
	\begin{equation}\label{4.12}
		\begin{cases}\partial_t u-\Delta u=f(u), &  t>0 , (x, y) \in \mathbb{R}^N, \\ u(t, x, y)=u_0 (x, y), &  t=0 , (x, y) \in \mathbb{R}^N,\end{cases}
	\end{equation}
assume that $u_0 \in C\left(\mathbb{R}^N,[0,1]\right)$ satisfies
\begin{equation}\label{4.13}
	\underline{V}(0, x, y) \leq u_0(x, y)
\end{equation}
for all $(x, y) \in \mathbb{R}^N$, and
\begin{equation}\label{4.14}
	\frac{\left|u_0(x, y)-\underline{V}(0, x, y)\right|}{\min \left\{1, e^{-2 v \min_{1\leq i\leq n} 
	\left\{x \cdot \nu_i \cot \theta_i+y+\frac{\tau_i}{\sin \theta_i}  \right\}}\right\} }\rightarrow 0 
	\quad\text{uniformly as } 
	d\left((x, y), \mathcal{R}_{0}\right) \rightarrow+\infty
\end{equation}
for some constant $v>0$, where $\mathcal{R}_0$ is the time slice of the ridges 
$\mathcal{R}$ at $t=0$. Then the solution $u(t, x, y)$ of Cauchy problem \eqref{4.12} for $t \geq 
0$ with initial condition $u(0, x, y)=u_0(x, y)$ satisfies
\begin{equation*}
	\lim _{t \rightarrow+\infty} \sup _{(x, y) \in \mathbb{R}\times\mathbb{R}^N}|u(t, x, y)-V(t, x, y)|=0 .
\end{equation*}
\end{proposition}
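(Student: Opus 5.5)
The plan is a squeeze argument: trap $u$ between the entire solution $V$ and a supersolution built from Lemma \ref{Lemma 4.1} whose perturbation decays in time. Time-monotonicity of $V$ (Lemma \ref{Lemma 3.7}) then closes the gap.

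\emph{Lower bound.} Since $\underline{V}(0,\cdot)\le u_0$ and $\underline{V}$ is a subsolution, comparison gives $u(t,\cdot)\ge \underline{V}(t,\cdot)$ for $t\ge0$. This alone is not enough, because we need $u\ge V-o(1)$. To get it, I would reuse the construction in Proposition \ref{Proposition 3.5}. Let $u_m$ be the solution starting from $\underline{V}(-m,\cdot)$ at time $-m$. Since $u_m(0,\cdot)\le V(0,\cdot)$, comparison yields $u(t,\cdot)\ge u_m(t,\cdot)$, provided $u_0\ge u_m(0,\cdot)$. That is not available directly. Instead I would build a subsolution of the form
\[
W^-_\delta(t,x,y)=\max\bigl\{\underline{V}(t,x,y),\ V(t-\varrho\delta+\varrho\delta e^{-\lambda t},x,y)-\delta e^{-\lambda t}\bigl[U^\beta(\eta)\omega(\eta)+1-\omega(\eta)\bigr]\bigr\}.
\]
It is the mirror of Lemma \ref{Lemma 4.1}, using $\partial_tV\ge k(\rho)>0$ near $\partial\mathcal{P}$ (Lemma \ref{Lemma 3.7}) and $f'\le \frac12f'(1)<0$ near $1$. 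Here $V-\underline{V}$ and the tail decay are controlled by \eqref{3.48}. Near $u\approx 0$, $f\equiv0$, so the tail term only needs $(\partial_t-\Delta)$ of $-\delta e^{-\lambda t}U^\beta(\eta)\le0$, which follows as in Case 1 of Lemma \ref{Lemma 4.1}.

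\emph{Upper bound.} Choose $\delta$ small and a time shift $T_0$ so that $u_0\le W^+_\delta(0,\cdot)$ with $\bar V(\cdot+T_0)$ in place of $\bar V$. Far from $\mathcal{R}_0$, this uses \eqref{4.14} together with the lower bound of $\bar V$ and the weighted tail $U^\beta(\eta)$. The weight $\min\{1,e^{-2v\min\{\cdot\}}\}$ in \eqref{4.14} must be dominated by $\delta U^\beta(\eta)$, so $\beta$ (and $\alpha$) are chosen so that $\alpha\beta c_f\le 2v$, as in \eqref{3.46}. Near $\mathcal{R}_0$ one uses $u_0\le1$ and a large time shift. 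Comparison then gives $u\le W^+_\delta$ for all $t\ge0$.

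\emph{Squeeze.} I would combine the two bounds with a Fife--McLeod/Guo--Wang type argument. Take $\limsup_{t\to\infty}\sup|u-V|$ and use the $\omega$-limit set. Along $t_k\to\infty$, $u(t_k+\cdot)$ converges to an entire solution $\widetilde V$ lying between $V(\cdot-\varrho\delta)$-type bounds, with error $\to0$ away from $\mathcal{R}$ by \eqref{3.14} as $\varepsilon\to0$. By Proposition \ref{Proposition 3.6}, $\widetilde V\equiv V$ up to the shift. Then I iterate: the shift $\varrho\delta$ can be reduced by restarting the argument at a later time, using strict positivity of $\partial_tV$ near the front, so the squeeze gives $\|u(t)-V(t)\|_\infty\to0$.

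\emph{Main obstacle.} The main obstacle is the initial ordering $u_0\le W^+_\delta(0)$ (and its analogue for $W^-$) uniformly in space, especially in the region $u\approx0$. There the degenerate nonlinearity gives no decay from $f$, so all control must come from the weighted tail $U^\beta(\eta)$ matching \eqref{4.14}. Converting a time shift in $\bar V$ into an $L^\infty$ gap near the ridges also needs the uniform positivity from Lemma \ref{Lemma 3.7}.
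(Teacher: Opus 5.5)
Your proposal has a genuine gap, and it sits in the upper barrier. You order $u_0\le W^+_\delta(0,\cdot)$ near $\mathcal R_0$ by a large shift $\bar V(\cdot+T_0)$. That only gives $u(t)\le \bar V(t+T_0+\varrho\delta)+C\delta$, and away from $\mathcal R$ this barrier is close to $\underline V(t+T_0)$, not to $\underline V(t)$.

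You therefore lose the property the conclusion rests on: that $|u-\underline V|$ is small on $\{d(\cdot,\mathcal R)\ge\rho\}$, uniformly for $t\ge0$. Limits of translates of $u$ are then only trapped between two differently shifted configurations. Proposition~\ref{Proposition 3.6} says nothing about such solutions; it does not give ``$\widetilde V\equiv V$ up to a shift''.

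The proposed iteration does not close this. Restarting at a later time with a smaller shift presupposes that $u$ is already close to $V$ near the ridges, which is the claim. With unbounded ridges there is also no compactness from which a Fife--McLeod contraction could be extracted.

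The missing idea is to keep $T_0=0$ and instead take $\alpha$ small depending on $\delta$.
\begin{itemize}
\item On $\widehat G$ two of the $\psi_i$ coincide with $\psi$, so $\sum_i e^{-\hat q_i}=1$ forces $\varphi-\psi\ge\ln 2$.
\item After scaling, the surface $\{y=\varphi(\alpha t,\alpha x,\alpha)/\alpha\}$ therefore lies at distance of order $1/\alpha$ above $\partial\mathcal P$ near the ridges.
\item Hence $\xi(0,\cdot)\to-\infty$ uniformly on $\{d(\cdot,\mathcal R_0)\le R_\delta\}$ as $\alpha\to0^+$, and $W^+_\delta(0,\cdot)=1\ge u_0$ there.
\item Far from $\mathcal R_0$, \eqref{4.14} is absorbed by $\delta U^{\beta_1}(\eta)$ with $\beta_1=\min\{v/c_f,\beta^*\}$, using $\eta(0,\cdot)\le\min_i\{x\cdot\nu_i\cot\theta_i+y+\tau_i/\sin\theta_i\}$. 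The constraint is on $\beta c_f$, not on $\alpha\beta c_f$, because $\eta$ is unscaled.
\item Since \eqref{3.14} holds for every fixed $\alpha$, comparison together with $u\ge\underline V$ gives $|u-\underline V|\le 3\varepsilon$ on $\{d(\cdot,\mathcal R)\ge\rho(\varepsilon)\}$, for all $t\ge0$ and every $\varepsilon>0$.
\end{itemize}

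With this, the trivial bound $u\ge\underline V$ is all the lower control you need, so $W^-_\delta$ is superfluous. It is also wrong as written. For the admissible datum $u_0=\underline V(0,\cdot)$, the requirement $W^-_\delta(0,\cdot)\le u_0$ amounts to $V(0,\cdot)-\underline V(0,\cdot)\le\delta$. But this gap is of order one near $\mathcal R_0$.

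The conclusion then comes from a contradiction argument with space--time points, not from an $\omega$-limit at a fixed location.
\begin{itemize}
\item Suppose $|u-V|(t_k,x_k,y_k)>3\varepsilon$ with $t_k\to\infty$. Then the points stay within $\rho(\varepsilon)$ of $\mathcal R$.
\item Split the indices into $I$, where $x_k\cdot\nu_i\cos\theta_i+y_k\sin\theta_i-c_ft_k+\tau_i\to\eta_i$, and $J$, where this tends to $+\infty$.
\item The translates $u(\cdot+t_k,\cdot+x_k,\cdot+y_k)$ converge to an entire solution. Because the preceding bound holds for every $\varepsilon$, this limit approaches $\max_{i\in I}U(x\cdot\nu_i\cos\theta_i+y\sin\theta_i-c_ft+\eta_i)$ away from the limiting ridges.
\item By Proposition~\ref{Proposition 3.6} this limit is $V_I$. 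The same translates of $V$ also converge to $V_I$, which is a contradiction.
\end{itemize}
The supremum is taken over a noncompact set with unbounded ridges, so these spatial translations and the sub-configurations $V_I$ cannot be avoided. Your plan does not account for them.
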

\begin{proof}
It follows from \eqref{3.7} and $\varphi(t, x, \alpha) \geq \psi(t, x, \alpha)$ in $\mathbb{R}\times\mathbb{R}^{N-1}$ that 
\begin{equation}\label{4.15}
	\eta(t,x,y)=y-\frac{\varphi(\alpha t, \alpha x, \alpha)}{\alpha} \leq y-\frac{\psi(\alpha t, \alpha x, \alpha)}{\alpha} = \min_{1\leq i\leq n} \left\{x \cdot \nu_i \cot \theta_i+y-\frac{c_f}{\sin \theta_i} t+\frac{\tau_i}{\sin \theta_i} \right\}.
\end{equation}

{\it Step 1: constructing super-solutions of  \eqref{4.12}.} 
Set $\beta_1:=\min \left\{v / c_{f}, \beta^*\right\}$, where positive constants $v$ and $\beta^*$ are given in \eqref{4.14} and Lemma \ref{Lemma 4.1}, respectively. For any $\delta \in\left(0,  \gamma_{\star} / 8\right]$, it follows from \eqref{4.14} that there exists a constant $R_{\delta}>0$ such that
\begin{equation}\label{4.16}
	u_0(x, y) \leq \underline{V}(0, x, y)+\delta\left(\frac{L_1}{2}\right)^{\beta_1} \min \left\{1, e^{- v \min_{1\leq i\leq n} \left\{x \cdot \nu_i \cot \theta_i+y+\frac{\tau_i}{\sin \theta_i}  \right\}}\right\}
\end{equation}
for all $d\left((x, y), \mathcal{R}_{0}\right)>R_{\delta}$, where the constant $L_1>0$ is given by \eqref{1.5}. We claim that
\begin{equation}\label{4.17}
	W_{\delta}^{+}(0, x, y) \geq u_0(x, y) \quad\text{in } \mathbb{R}^N
\end{equation}
for all $\delta\in\left(0, \gamma_{\star} / 8\right]$, where parameters in Lemma \ref{Lemma 4.1} are taken as $\beta=\beta_1$, $\lambda=\lambda\left(\beta_1\right)$, $\varrho=\varrho\left(\beta_1, \lambda\right)$, $\forall \varepsilon \in\left(0, \varepsilon_0^{+}\left(\beta_1\right)\right)$, and $0<\alpha<\min \left\{\frac{\beta_{0} e}{\varrho C_{2} L_{3}}, \frac{ 3 L_{1}^{\beta_{1}} e}{4 \varrho C_{2} L_{2}}, \alpha_0^{+}\left(\beta_1, \varepsilon\right)\right\}$ is to be determined.
	
\textbf{Case 1: }$\min_{1\leq i\leq n} \left\{x \cdot \nu_i \cot \theta_i+y+\frac{\tau_i}{\sin \theta_i} \right\}>0$. By \eqref{1.5}, there is a positive constant $X_*>0$ such that
\begin{equation}\label{4.18}
	U(\eta) \geq \frac{L_1}{2} e^{-c_{f} \eta}, \quad \forall(\eta, x, y) \in\left(X_*,+\infty\right) \times \mathbb{R}^N.
\end{equation}
Since $\beta_1 \leq v / K$, it follows from Lemma \ref{Lemma 4.1}, \eqref{4.15}, \eqref{4.16}, \eqref{4.18} and $\varphi(t, x, \alpha) \geq \psi(t, x, \alpha)$ in $\mathbb{R}\times\mathbb{R}^{N-1}$ that
\begin{equation*}
   \begin{aligned}
	W_{\delta}^{+}(0, x, y) &\geq \min\left\{\bar{V}(0, x, y)+\delta U^{\beta_1}(\eta(\varpi(0), x, y)), 1\right\}\\
   	& \geq \min\left\{\underline{V}(0, x, y)+\delta\left(\frac{L_1}{2}\right)^{\beta_1} e^{-\beta_1 c_{f} \eta} , 1\right\}\\
   	& \geq \min\left\{\underline{V}(0, x, y)+\delta\left(\frac{L_1}{2}\right)^{\beta_1} e^{-v \eta} , 1\right\}\\
   	& \geq \min\left\{\underline{V}(0, x, y)+\delta\left(\frac{L_1}{2}\right)^{\beta_1} e^{-v \min_{1\leq i\leq n} \left\{x \cdot \nu_i \cot \theta_i+y+\frac{\tau_i}{\sin \theta_i} \right\}} , 1\right\}\\
   	& \geq u_0(x, y)
   \end{aligned}
\end{equation*}
in $\left\{(x, y): \eta(0, x, y)>X_*, d\left((x, y), \mathcal{R}_{0}\right)>R_{\delta}\right\}$. By \eqref{1.5}, we can get
\begin{equation*}
	\begin{aligned}
		W_{\delta}^{+}(0, x, y) &\geq \min\left\{\bar{V}(0, x, y)+{\delta} U^{\beta_1}\left(X_*\right), 1\right\} \\
		&\geq \min\left\{\bar{V}(0, x, y)+{\delta} U\left(X_*\right), 1\right\}\\
		&\geq \min\left\{\bar{V}(0, x, y)+r_1, 1\right\}
	\end{aligned}
\end{equation*}
in $\left\{(x, y): \eta(0, x, y) \leq X_*\right\}$ for some constant $r_1>0$. Therefore, even if it means increasing $R_{\delta}$, it follows from \eqref{4.14} that
\begin{equation*}
	 u_0(x, y)-\underline{V}(0, x, y) \leq r_1 \quad\text{in }\left\{(x, y): \eta(0, x, y) \leq X_*, d\left((x, y), \mathcal{R}_{0}\right)>R_{\delta}\right\}
\end{equation*}
and
\begin{equation*}
W_{\delta}^{+}(0, x, y)\geq  \min\left\{\bar{V}(0, x, y)+r_1, 1\right\} \geq \min\left\{\underline{V}(0, x, y)+r_1 , 1\right\}\geq u_0(x, y)
\end{equation*}
in $\left\{(x, y): \eta(0, x, y) \leq X_*, d\left((x, y), \mathcal{R}_{0}\right)>R_{\delta}\right\}$. By \cite[Lemma 4.2]{H. Guo2024}, we can obtain
\begin{equation*}
\xi(0, x, y)=\frac{y-\varphi(0, \alpha x, \alpha) / \alpha}{\sqrt{1+|\nabla \varphi(0, \alpha x, \alpha)|^2}} \rightarrow-\infty \quad\text{as } \alpha \rightarrow 0^{+} \text{ uniformly in } \left\{(x, y):d\left((x, y), \mathcal{R}_{0}\right)\leq R_{\delta}\right\}.
\end{equation*}
By \eqref{1.3}, \eqref{1.5}, \eqref{4.15} and Lemma \ref{Lemma 4.1}, there is a constant 
$\alpha_6^{+}(\delta)>0$ such that for any $0<\alpha<\alpha_6^{+}$,
\begin{equation}\label{4.19}
W_{\delta}^{+}(0, x, y) \geq \min\left\{U(\xi)+\delta U^{\beta_1}\left(\eta\right), 1\right\}\geq \min\left\{ U(\xi)+\delta U\left(\eta\right) , 1\right\}= 1 \geq u_0(x, y)
\end{equation}
in $\left\{(x, y):d\left((x, y), \mathcal{R}_{0}\right)\leq R_{\delta}\right\}$. Hence, we prove $W_{\delta}^{+}(0, x, y) \geq u_0(x, y)$ in Case $1$.
	
\textbf{Case 2: }$\min_{1\leq i\leq n} \left\{x \cdot \nu_i \cot \theta_i+y+\frac{\tau_i}{\sin \theta_i} \right\}\leq 0$. It follows from \eqref{1.5} and \eqref{4.15} that
\begin{equation*}
	\begin{aligned}
    W_{\delta}^{+}(0, x, y) &\geq \min\left\{\bar{V}(0, x, y)+\delta U^{\beta_1}(0, x, y), 1\right\}\\
    &\geq \min\left\{\bar{V}(0, x, y)+\delta U(0, x, y), 1\right\}\\
    &\geq \min\left\{\underline{V}(0, x, y)+r_2, 1\right\}		
	\end{aligned}
\end{equation*}
for some constant $r_2>0$. By \eqref{4.14}, even if it means increasing $R_{\delta}$, one knows
\begin{equation*}
	u_0(x, y)-\underline{V}(0, x, y) \leq r_2 \quad\text{in }\left\{(x, y): d\left((x, y), \mathcal{R}_{0}\right)>R_{\delta}\right\}
\end{equation*}
and
\begin{equation*}
W_{\delta}^{+}(0, x, y)\geq  \min\left\{\underline{V}(0, x, y)+r_2 , 1\right\} \geq u_0(x, y) \quad\text{in }\left\{(x, y): d\left((x, y), \mathcal{R}_{0}\right)>R_{\delta}\right\}.
\end{equation*}
Similar to arguments of \eqref{4.19}, there exists a constant $\alpha_7^{+}(\delta)>0$ such that for each $0<\alpha<\alpha_7^{+}$,
\begin{equation*}
W_{\delta}^{+}(0, x, y) \geq u_0(x, y) \quad\text{in }\left\{(x, y): d\left((x, y), \mathcal{R}_{0}\right)\leq R_{\delta}\right\}.
\end{equation*}
Thus one has $W_{\delta}^{+}(0, x, y) \geq u_0(x, y)$ in Case $2$.

Finally, we get that \eqref{4.17} is true for any $0<\alpha<\min \left\{\frac{\beta_{0} e}{\varrho C_{2} L_{3}}, \frac{ 3 L_{1}^{\beta_{1}} e}{4 \varrho C_{2} L_{2}}, \alpha_0^{+}\left(\beta_1, \varepsilon\right), \alpha_6^{+}(\delta), \alpha_7^{+}(\delta)\right\}$.

{\it Step 2: construct a time sequence.}
By \eqref{4.13}, Lemma \ref{Lemma 4.1} and Step $1$, using the comparison principle, we obtain
\begin{equation}\label{4.20}
\underline{V}(t, x, y) \leq u(t, x, y) \leq W_{\delta}^{+}(t, x, y) \quad\text{in }[0,+\infty) \times \mathbb{R}^N
\end{equation}
for any $\delta \in\left(0,  \gamma_{\star} / 8\right]$. It follows from \eqref{4.20} that
\begin{equation}\label{4.21}
\underline{V}(t, x, y) \leq u(t, x, y) \leq \bar{V}\left(t, x, y\right)+ \left\|\partial_t \bar{V} \right\|_{L^{\infty}([0,1])}  \varrho \delta +  \delta \quad\text{in }[0,+\infty) \times \mathbb{R}^N
\end{equation}
Setting $\delta>0$ small enough, one has that for any $i \in\{1, \ldots, n\}$,
\begin{equation*}
	\underline{V}(t, x, y) \leq u(t, x, y) \leq \bar{V}\left(t, x, y\right)+ \varepsilon \quad\text{in }[0,+\infty) \times \mathbb{R}^N.
\end{equation*}
By Lemma \ref{Lemma 3.4}, for any $\varepsilon>0$, there is an $\rho(\varepsilon)>0$ such that
\begin{equation}\label{4.22}
|u(t, x, y)-\underline{V}(t, x, y)| \leq 3 \varepsilon \quad\text{in }\{d((t, x, y), \mathcal{R}) \geq \rho(\varepsilon)\} .
\end{equation}

Assume by the contrary that there exists an $\varepsilon>0$ and a sequence $\left\{\left(t_k, x_k, y_k\right)\right\}$ such that as $k \rightarrow+\infty$, $t_k \rightarrow+\infty$ and
\begin{equation}\label{4.23}
\left|u\left(t_k, x_k, y_k\right)-V\left(t_k, x_k, y_k\right)\right| > 3 \varepsilon .
\end{equation}
By \eqref{4.22}, it holds that
\begin{equation*}
\limsup _{k \rightarrow+\infty} d\left(\left(t_k, x_k, y_k\right), \mathcal{R}\right) \leq \rho(\varepsilon) .
\end{equation*}
Divide the set $\{1, \ldots, n\}$ into two subsets $I$ and $J$ such that $x_k \cdot \nu_i \cos \theta_i+y_k \sin \theta_i- c_f t_k+\tau_i$ is uniformly bounded for $i \in I$, while for $j \in J$,
\begin{equation}\label{4.24}
\lim _{k \rightarrow+\infty}\left(x_k \cdot \nu_j \cos \theta_j+y_k \sin \theta_i-c_f t_k+\tau_j\right)=+\infty.
\end{equation}
The set $I$ is not empty, but the set $J$ may be empty.

After passing to a subsequence of $\left(t_k, x_k, y_k\right)$ if necessary, there are $\eta_i \in \mathbb{R}$ $(i \in I)$ such that 
\begin{equation*}
\lim _{k \rightarrow+\infty}\left(x_k \cdot \nu_i \cos \theta_i+y_k \sin \theta_i-c_f t_k+\tau_i\right)=\eta_i \quad \text{for each } i \in I .
\end{equation*}
Hence, $\partial \mathcal{P}-\left(t_k, x_k, y_k\right) \rightarrow \partial \mathcal{P}_{\infty}$, where $\mathcal{P}_{\infty}$ is the polytope enclosed by hyperplanes
\begin{equation*}
\left\{(t, x, y) \in \mathbb{R} \times \mathbb{R}^N ; x \cdot \nu_i \cos \theta_i+y \sin \theta_i-c_f t+\eta_i=0\right\},\; i \in I.
\end{equation*}
It follows from \eqref{4.24} that $U\left(x \cdot \nu_j \cos \theta_j+y \sin \theta_j-c_f t+\eta_j\right) \rightarrow 0$ for each $j \in J$. Therefore, $\underline{V}\left(t+ t_k, x+x_k, y+y_k\right)$ converge to
\begin{equation*}
u_{I, *}(t, x, y):=\max _{i \in I}\left\{U\left(x \cdot \nu_i \cos \theta_i+y \sin \theta_i-c_f t+\eta_i\right)\right\}.
\end{equation*}

Denote $u_k(t, x, y)=u\left(t+t_k, x+x_k, y+y_k\right)$. By parabolic estimates and \eqref{4.22}, $u_k$ converge (up to a subsequence) to a solution $u_{\infty}$ of \eqref{3.1}. By \eqref{3.14} and \eqref{4.21} (with $\delta$ small enough), we obtain
\begin{equation*}
\left|u_{\infty}(t, x, y)-u_{I, *}(t, x, y)\right| \leq 3 \varepsilon \quad\text{in } \{d\left((t, x, y), \partial \mathcal{P}_{\infty}\right) \geq \rho(\varepsilon)\}
\end{equation*}
and 
\begin{equation*}
\left|u_{\infty}(t, x, y)-u_{I, *}(t, x, y)\right| \leq 4 \varepsilon  \quad\text{in }\{ d\left((t, x, y), \mathcal{R}_{\infty}\right) \geq \rho(\varepsilon)\} .
\end{equation*}
Since $\varepsilon$ is arbitrary, one has
\begin{equation*}
\left|u_{\infty}(t, x, y)-u_{I, *}(t, x, y)\right| \rightarrow 0 \quad \text {uniformly as } d\left((t, x, y), \mathcal{R}_{\infty}\right) \rightarrow+\infty .
\end{equation*}
According to Proposition \ref{Proposition 3.6}, it holds that $u_{\infty} \equiv V_I$ on $\mathbb{R} \times \mathbb{R}^N$, where $V_I:=V\left(\cdot ;\left(e_i, \eta_i \right)_{i \in I}\right)$. However, by Proposition \ref{Proposition 3.5} and \eqref{4.24}, $V\left(t+t_k, x+x_k, y+y_k\right)$ also converge to $V_I$ as $k \rightarrow+\infty$. This is a contradiction with \eqref{4.23}.
The proof is complete.
\end{proof}

\vspace{0.3cm}
\begin{proof}[Prof of Theorem \ref{Theorem 2.4}]
Since \eqref{1.1} is invariant under rotations, the stability of $V_{\boldsymbol{e}, 
\boldsymbol{\tau}}$ in Theorem \ref{Theorem 2.1}, i.e. Theorem \ref{Theorem 2.4}, is a direct 
consequence of the Proposition \ref{Proposition 3.4}.
\end{proof}

\section*{Acknowledgments}

This work was partially supported by NSF of China (12171120) and by the Fundamental Research 
Funds for the Central Universities (No. 2023FRFK030022, 2022FRFK060028).

\section*{ Data availability statements}  

We do not analyze or generate any datasets, because our work proceeds within a theoretical 
and mathematical approach.

\section*{ Conflict of interest} 

There is no conflict of interest to declare.

\end{document}